\journal{Arxiv}
\newcommand\BRFrag{battery-restricted fragment\xspace}
\DeclareMathOperator*{\minimize}{minimize}
\newcommand{\beq}{\begin{equation}}
\newcommand{\eeq}{\end{equation}}
\theoremstyle{definition}
\newtheorem{exmp}{Example}
\theoremstyle{remark}
\theoremstyle{TH}
\newtheorem{theorem}{Theorem}
\newtheorem{definition}{Definition}
\newtheorem{proposition}{Proposition}
\newtheorem{lemma}{Lemma}
\def\@makecaption#1#2{%
  \vskip\abovecaptionskip
  \sbox\@tempboxa{#1 #2}%
    {\bfseries #1} #2\par
  \vskip\belowcaptionskip}
\begin{document}
\begin{frontmatter}

\title{A Branch-and-Price Algorithm for the Electric Autonomous Dial-A-Ride Problem}

\author[address1,address5]{Yue Su \corref{correspondingauthor}}
\ead{yue.su@enpc.fr}
\author[address2]{Nicolas Dupin}
\ead{nicolas.dupin@univ-angers.fr}
\author[address3]{Sophie N. Parragh}
\ead{sophie.parragh@jku.at}
\author[address4,address5]{Jakob Puchinger}
\ead{jpuchinger@em-normandie.fr}

\address[address1]{CERMICS, Ecole des Ponts ParisTech, 77420 Champs-sur-Marne, France.}
\address[address2]{Univ Angers, LERIA, SFR MATHSTIC, F-49000 Angers, France.}
\address[address3]{Institute of Production and Logistics Management, Johannes Kepler University Linz, 4040, Linz, Austria.}
\address[address4]{EM Normandie Business School, Métis Lab, 92110, Clichy, France.}
\address[address5]{Université Paris-Saclay, CentraleSupélec, Laboratoire Génie Industriel, 91190, Gif-sur-Yvette, France.}

\cortext[correspondingauthor]{Corresponding author: yue.su@enpc.fr}

\begin{abstract} 
The Electric Autonomous Dial-A-Ride Problem (E-ADARP) consists in scheduling a fleet of electric autonomous vehicles to provide ride-sharing services for customers that specify their origins and destinations. The E-ADARP differs from the classical DARP in two aspects: (i) a weighted-sum objective that minimizes both total travel time and total excess user ride time; (ii) the employment of electric autonomous vehicles and a partial recharging policy. 
This paper presents a highly-efficient labeling algorithm, which is integrated into Branch-and-Price (B\&P) algorithms to solve the E-ADARP. To handle (i), we introduce a fragment-based representation of paths. A novel approach is invoked to abstract fragments to arcs while ensuring excess-user-ride-time optimality. 
We then construct a new graph that preserves all feasible routes of the original graph by enumerating all feasible fragments, abstracting them to arcs, and connecting them with each other, depots, and recharging stations in a feasible way. 
On the new graph, partial recharging (ii) is tackled exactly by tailored Resource Extension Functions (REFs). We apply strong dominance rules and constant-time feasibility checks to compute the shortest paths efficiently. These methods construct the first labeling algorithm that can deal with minimizing (excess) user ride time. 
In the computational experiments, the B\&P algorithm achieves optimality in 71 out of 84 instances. Remarkably, among these instances, 50 were solved optimally at the root node without branching. We identify 26 new best solutions, improve 30 previously reported lower bounds, and provide 17 new lower bounds for large-scale instances with up to 8 vehicles and 96 requests. In total 42 new best solutions are generated on previously solved and unsolved instances. In addition, we analyze the impact of incorporating the total excess user ride time within the objectives and allowing unlimited visits to recharging stations. The following managerial insights are provided: (1) solving a weighted-sum objective function can significantly enhance the service quality, while still maintaining operational costs at nearly optimal levels, (2) the relaxation on charging visits allows us to solve all instances feasibly and reduces the average solution cost by 0.65\%. We also derive good upper bounds on the maximum recharging visits per station for considered instance sets. 
\end{abstract}


\begin{keyword}
Dial-A-Ride Problem \sep Electric Autonomous Vehicles \sep (Excess-)user-ride-time optimal scheduling \sep Branch-and-Price \sep Labeling Algorithm
\end{keyword}

\end{frontmatter}

\section{Introduction}\label{intro}
The Dial-A-Ride Problem (DARP) consists in designing minimum-cost routes by scheduling a fleet of vehicles to serve a set of customers who specify their origins and destinations \citep{cordeau2007dial}. For each customer request, a time window is defined on either the origin or the destination. The DARP was first introduced in the context of providing door-to-door service for handicapped individuals, e.g., \cite{madsen1995heuristic, toth1996fast}. In recent years, the concept of the DARP has been extended to adopt requests from normal individuals and provide them with ride-sharing services. Many demand-responsive systems have been constructed, such as the mobile-based app of BlaBlaCar in France and Didi Hitch in China \citep{jin2018ridesourcing}. With the blooming of on-demand ride-sharing services, considerable attention has arisen to solving the DARP and its variants  \citep{cordeau2007dial,ho2018survey}. The DARP is a generalization of several NP-hard problems such as the Pickup and Delivery Vehicle Routing Problem (PDPVRP) and the Vehicle Routing Problem with Time Windows (VRPTW) and is therefore very difficult to solve to optimality using exact methods. Consequently, only a few studies have proposed exact methods, e.g., \cite{ cordeau2006branch,braekers2014exact,gschwind2015effective}. The DARP is even more challenging than the PDPVRP and the VRPTW, as user inconvenience needs to be considered while minimizing the operational cost \citep{cordeau2003tabu}. The classical DARP model imposes a maximum user ride time constraint on every user request to maintain a certain level of service quality. Due to this constraint in combination with time windows, scheduling service start times as early as possible does not necessarily result in a feasible schedule for a given sequence of pickup and delivery locations, given that one exists. On the contrary, allowing delays in the service start time may help to eliminate unnecessary waiting time for succeeding nodes and, as such, reduce the user ride time.
Heuristic solution methods for the DARP usually invoke the ``eight-step" procedure of \cite{cordeau2003tabu}, which composes a feasible schedule with the latest possible service start time at the origin depot and subsequently delays service start times at pickup nodes using the notion of forward time slack \citep{savelsbergh1992vehicle} while respecting maximum user ride time constraints. As a result, the complexity of the route evaluation in the DARP rises substantially. 

This article addresses the Electric Autonomous DARP (E-ADARP), which was first formulated by \cite{bongiovanni2019electric}. 
Although the E-ADARP shares some constraints with the classical DARP \citep{cordeau2007dial}, the E-ADARP contains other problem-specific features that derive from the electric and autonomous nature of vehicles. First, the use of an electric vehicle fleet necessitates the calculation of State of Charge (SoC) when a vehicle arrives at a node in order to prevent vehicles from running out of charge. Vehicles make detours to recharging stations when necessary, and partial recharging is allowed at recharging stations. 
Second, considering the autonomy of vehicles removes the maximum route duration constraints and makes the E-ADARP more difficult to solve. Also, the autonomy of vehicles offers the possibility to operate vehicles in a non-stop manner. As autonomous vehicles need to continuously relocate during their non-stop service, the destination depot is no longer predefined. Another important feature is that the E-ADARP employs a weighted-sum objective, including total travel time and total excess user ride time. Incorporating the total excess user ride time into the objective function offers tangible advantages for ride-sharing service providers. It enables them to greatly improve service quality (i.e., to reduce total excess user ride time) while yielding very small increases in operational costs (i.e., total travel time). However, solving such a weighted-sum objective function is challenging, as it requires obtaining the excess-user-ride-time optimal schedule for an E-ADARP route. 
This task is even more complicated when designing a labeling algorithm to solve column-generation subproblems of the E-ADARP, as each E-ADARP partial path is not fixed until it is extended to the sink node (i.e., destination depot). In this case, one must decide all excess-user-ride-time optimal schedules from battery-feasible schedules along the extension.
In this paper, we first propose a Column Generation (CG) algorithm relying on a highly-efficient labeling algorithm to solve the E-ADARP. Additionally, we consider adding valid cuts to strengthen the quality of lower bounds. Then, we integrate the developed CG algorithm into the B\&P scheme to solve the E-ADARP exactly.

The contributions of our work are summarized as follows: 
\begin{itemize}
    \item \textbf{From a theoretical perspective:} We propose 
    the first labeling algorithm for a path-based formulation of the DARP/E-ADARP that can deal with minimizing the total (excess) user ride time. 
    We first present a fragment-based representation of the resource-constrained paths. On each fragment, we apply a new approach to determine the minimum excess user ride time and abstract the fragment to an arc that captures all excess-user-ride-time optimal schedules. We then construct a new graph that preserves all feasible routes of the original one, and we ensure excess-user-ride-time optimality on each arc of the new graph. We develop an efficient labeling algorithm with strong dominance rules on the new graph to allow fast shortest-path computations in solving the CG subproblems of the E-ADARP.
    \item \textbf{From an experimental perspective:} The numerical results demonstrate the superiority of our CG and B\&P algorithms over the state-of-the-art methods. Our CG algorithm solves 50 out of 84 instances optimally at the root node and obtains 40 equal lower bounds and 24 better lower bounds than those reported in \cite{bongiovanni2019electric}.
    Twenty-two new best solutions and 15 new optimal solutions are identified with our CG algorithm on previously solved and unsolved instances. On larger-scale instances, the CG algorithm yields 14 better solutions as well as 17 new lower bounds, compared to the best-reported heuristic results of \cite{su2023deterministic}. By integrating the CG algorithm into the B\&P framework, we finally solve 71 out of 84 instances optimally within the two-hour time limit, while the B\&C algorithm can only solve 49 instances optimally. In addition, our B\&P algorithm obtains 26 new best solutions and 30 improved lower bounds. 
    On larger-scale instances, we obtain 16 new best solutions, compared with the existing results of \cite{su2023deterministic}.
     \item \textbf{From a model perspective:} In the initial version of the E-ADARP in \cite{bongiovanni2019electric}, at-most-one charging visit is allowed on each recharging station. In our work, we propose a more general version of the E-ADARP, which allows unlimited visits to each recharging station.
    \item \textbf{Managerial insights and other interesting findings:} We investigate the impact of incorporating the total excess user ride time within the objectives and allowing unlimited charging visits to each recharging station. The following managerial insights are provided: (1) compared with the optimal results of solely minimizing operational costs (i.e., total travel times), we achieve substantial improvements in service quality (i.e., total excess user ride times) with only a slight increase in the other objective. This provides practical benefits for service providers, as they can significantly improve service quality while maintaining nearly optimal operational costs. (2) By allowing unlimited charging visits per recharging station, all instances can be solved feasibly, and their solution costs are reduced by 0.65\% on average. We also analyze the impact of applying different column initialization strategies. We find that employing an efficient algorithm to initialize the column pool may sometimes accelerate convergence.
\end{itemize}

This paper is organized into six sections. Following the introduction, Section \ref{review} presents an overview of related literature in the DARP and Electric VRPs (E-VRPs). 
In Section \ref{description}, we present the problem definition and the notations of sets, parameters, and variables used throughout the paper. An extended formulation of the E-ADARP is given at the end of this section. Section \ref{methodology} introduces the proposed CG and B\&P algorithms. The results of the proposed CG and B\&P algorithms are presented in Section \ref{experiments}. Finally, Section \ref{conclusion} discusses conclusions and extensions.

\section{Literature Review}\label{review}
The E-ADARP is different from classical DARPs in the following aspects: (1) the application of Electric Autonomous Vehicles (EAVs) in the vehicle fleet and partial recharging performed at recharging stations; (2) a weighted sum objective function minimizing total travel time and total excess user ride time.
Following these features, this section briefly reviews the literature that considers partial recharging in E-VRPs and emphasizes literature that applies CG and B\&P. Then, we review articles related to DARPs with Electric Vehicles (EVs) and those that specifically minimize the excess user ride time.

\subsection{Related Literature of E-VRPs}
The E-VRP was first introduced by \cite{conrad2011recharging} and has been extensively studied in recent years. \cite{schneider2014electric} proposed the Electric Vehicle Routing Problem with Time Windows and Recharging Stations (E-VRPTW). Their work considered customer time windows and limited vehicle capacities, and introduced new E-VRP instances. 
Since then, several problem variants of the E-VRPTW have been proposed considering different problem features. 

Many works relax the full recharging assumption to a partial recharging policy, e.g.,\citep{bruglieri2015variable,keskin2016partial,desaulniers2016exact,duman2021branch,ceselli2021branch}. The majority of these articles develop meta-heuristic algorithms to solve the problem, e.g., \cite{felipe2014heuristic,hiermann2019routing,montoya2017electric,froger2017matheuristic}, while some of them propose exact methods. The representative work is \cite{desaulniers2016exact}, where the authors investigate four E-VRP variants: E-VRP with full recharging plus single/multiple visits to recharging stations and E-VRP with partial recharging plus single/multiple visits to recharging stations. For each variant, customized mono- and bi-directional labeling algorithms are proposed to solve the CG subproblems within a B\&P framework. 
 \cite{desaulniers2020variable} further accelerate their previous labeling algorithms with a variable-fixing-based acceleration strategy, which yields significant speedup. Recently, \cite{duman2021branch} develop both an exact (i.e., Branch-and-Cut-and-Price, hereafter B\&C\&P) and a CG-based heuristic algorithm for solving E-VRPTW. Six acceleration techniques are applied, and both the proposed heuristic and exact algorithm obtain a number of new best and optimal solutions.
 \cite{lam2022branch} investigate a more practical case of EVRPTW in which the availability of chargers at the recharging stations and a piecewise-linear recharging time is taken into account. A B\&C\&P is proposed and is capable of solving instances with 100 customers.

\subsection{Related Literature of DARPs with EVs}
Several articles have investigated the impact of EVs on the DARP. \cite{masmoudi2018dial} is the first work to investigate a version of the DARP with EVs. Their model considers a battery swapping technique to recharge electric vehicles and applies a realistic energy consumption rate as in \cite{genikomsakis2017computationally}; the battery swapping time is a constant while the energy consumption on each arc varies, influencing the routing results. \cite{masmoudi2018dial} propose Evolutionary Variable Neighborhood Search (EVO-VNS) algorithms that can solve instances with up to three vehicles and 18 requests. \cite{bongiovanni2019electric} investigate EAVs in the context of the DARP and propose a new problem variant, namely the E-ADARP. The authors impose a minimum battery level (defined by $\gamma Q$, where $\gamma$ is the minimum battery level ratio and $Q$ is the total battery capacity) at the end of the route to restrict the vehicle's SoC at the destination depot. They analyze three different $\gamma$ values, i.e., $\gamma \in \{0.1, 0.4, 0.7\}$, representing a minimum SoC of 10\%, 40\%, and 70\% of the total battery capacity must be kept when the vehicle returns to the destination depot. As the $\gamma$ values rise, the problem becomes more constrained and more challenging to solve. The authors propose a Branch-and-Cut algorithm (B\&C) that solves 42 out of 56 instances optimally in less-constrained cases (i.e., $\gamma = 0.1, 0.4$). However, the proposed B\&C algorithm cannot obtain feasible solutions for 9 out of 28 instances within two hours of run time in the highly-constrained case (i.e., $\gamma = 0.7$). Moreover, for the instances that are not solved optimally, the reported lower bounds still have important gaps (with up to 9\%) to the best objective values. The largest instance that can be solved optimally by the B\&C algorithm contains 5 vehicles and 40 requests. More recently, \cite{su2023deterministic} propose a Deterministic Annealing (DA) local search, which provides near-optimal solutions within reasonable computational times. The authors construct a large-sized benchmark instance set and provide results for instances with up to 8 vehicles and 96 requests. \cite{bongiovanni2022machine} extend the static E-ADARP to the dynamic E-ADARP and propose a Machine Learning-based Large Neighborhood Search (MLNS), where random forest classification is applied in the selection phase of destroy and repair operators. The proposed MLNS algorithm outperforms the Adaptive Large Neighborhood Search (ALNS) metaheuristic of \cite{ropke2006adaptive} by nine percent on average in terms of solution quality, at the cost of additional computational time generated from solving the prediction problem at every iteration. \cite{kullman2022dynamic} propose an Electric Ridehail Problem with Centralized Control (E-RPC). They employ reinforcement learning to the E-RPC, and develop model-free policies which anticipate demand without any prior knowledge.


\subsection{Related Literature of Excess User Ride Time Minimization}
Several works have specifically minimized excess user ride time in the DARP. \cite{parragh2009heuristic} observe the ``eight-step" procedure in \cite{cordeau2003tabu} does not necessarily minimize excess user ride time. Therefore, the authors adapt the forward time slack calculation to avoid increasing the excess user ride time for a route. However, the adapted procedure leads to a more restrictive feasibility check and may result in incorrect infeasibility declarations. 
This drawback is fixed by the scheduling heuristic proposed by \cite{molenbruch2017multi}. The heuristic first sets the excess ride time of each request to its lower bound to construct a potentially infeasible schedule and then shifts service start times at some nodes to recover the infeasibility while minimizing excess user ride time. 
However, the developed scheduling procedures in \cite{parragh2009heuristic} and \cite{molenbruch2017multi} cannot ensure excess-user-ride-time optimality for a given route. Recently, \cite{bongiovanni2022ride} have proposed the first exact scheduling procedure for the DARP, which is then extended to the E-ADARP by integrating a battery management heuristic. However, the extended scheduling procedure is no longer exact as the excess-user-ride-time optimal schedules for a sequence may not be battery-feasible schedules. \cite{su2023deterministic} have introduced the first exact method to calculate the value of the minimal total excess user ride time for a given E-ADARP route. However, their method does not encompass the determination of schedules optimized for minimal excess user ride time and cannot be applied in a labeling algorithm as they only minimize excess ride time for a fixed route.

\subsection{Conclusion and Motivation}
Based on our review, \cite{bongiovanni2019electric} is the only work that proposes an exact method (i.e., the B\&C algorithm) to solve the E-ADARP. However, their proposed B\&C algorithm has difficulties obtaining optimal/feasible solutions within the given time limit for medium-to-large-sized instances. As a result, significant gaps between their best-found upper and lower bounds are observed. These limitations motivate us to propose an efficient CG approach, including a labeling algorithm for solving the pricing problem to provide tighter lower bounds and improved upper bounds. The CG performance largely depends on the efficiency of the labeling algorithm to solve the E-ADARP subproblems, where one must always determine the excess-user-ride-time optimal schedule from battery-feasible schedules during label extension. 
This issue complicates solving the CG subproblems of the E-ADARP and cannot be handled exactly by existing DARP feasibility check methods \citep{gschwind2015effective,gschwind2019adaptive} and scheduling procedures \citep{parragh2009heuristic,molenbruch2017multi,bongiovanni2022ride}. In this work, we address this issue by constructing a new sparser graph, where excess-user-ride-time optimality is guaranteed on each arc. We define a labeling algorithm on this new graph and handle battery feasibility by tailored REFs. Its efficiency is ensured by strong dominance rules and constant-time feasibility checks.


\section{The E-ADARP Description and Extended Formulation} \label{description}
This section presents the mathematical notation used throughout the paper and sketches the structure of the compact three-index E-ADARP formulation proposed in \cite{bongiovanni2019electric}. We extend the initial E-ADARP model to a more general version by allowing at most $N_s^{max}$ visits per recharging station, where $N_s^{max}$ can be any non-negative integer. When we set $N_s^{max} = \infty$, it implies that unlimited visits are allowed to recharging stations. In the second part, we briefly introduce the weighted-sum objective function of minimizing total travel time and total excess user ride time and the E-ADARP constraints. The final part introduces a path-based formulation for the E-ADARP. 

\subsection{Notation and Problem Statement}
Let $n$ denote the number of requests to be served by vehicles in the vehicle set $K$. The E-ADARP can be defined on a complete directed graph, denoted as $G=(V,A)$, where $V$ is the set of vertices and $A=\{(i,j):i,j \in V, i \neq j\}$ is the set of arcs. $V$ can be partitioned into several subsets, namely, $V= N \cup S \cup O \cup F$. Subset $N$ is the set of all customer nodes and is the union of customer pickup set (denoted as $P$) and drop-off set (denoted as $D$), namely, $N = P \cup D$. Subset $S$ is the set of recharging stations. Sets $O$ and $F$ denote the set of origin depots and destination depots, respectively. Particularly, the cardinality of $F$ might be higher than the total number of vehicles and therefore allows vehicles to select destination depots. As mentioned, EAVs are driverless and can be operated in a non-stop manner, which removes maximum route duration constraints and eliminates the need to predefine depots for the drivers to start and end their shifts. 
These characteristics differentiate the E-ADARP from most of the DARP literature (e.g., \cite{braekers2014exact}; \cite{parragh2011introducing}), where maximum route duration constraints are considered, and the depots are predefined. 

Each user request is a pair $(i,n+i)$, where $i=1,\cdots,n$ denotes the pickup and $n+i$ the drop-off node. A maximum user ride time $m_i$ is imposed for each request. Each customer node $i \in N$ is associated with a load $q_i$, and a service duration $s_i$ ($s_i > 0$), such that $q_i = -q_{n+i}$, $i = 1,\cdots,n$. For all other nodes $j \in O \cup F \cup S$, $q_j$ and $s_j$ are equal to zero. 
A time window on each node is also given and is denoted by $[e_i,l_i]$ for $i \in V$. We tackle the static E-ADARP, i.e., all user requests are known at the beginning of the planning horizon and have to be served.

Each electric vehicle $k \in K$ is characterized by a maximum vehicle capacity and a maximum battery capacity. Following \cite{bongiovanni2019electric}, we assume that vehicles have heterogeneous vehicle capacity $C_k$ and homogeneous battery capacity $Q$. The initial battery charge of each vehicle is denoted as $Q_0^k$. 
The travel time and battery consumption of each arc $(i,j) \in A$ are denoted as $t_{i,j}$ and  $b_{i,j}$, respectively. We assume that $b_{i,j}$ is constant on arc $(i,j)$ and is independent of load, speed, and SoC, as in \cite{desaulniers2016exact}. This assumption is suitable when the vehicle load is significantly smaller than the curb weight of the vehicle and velocity remains constant or has infrequent variations (\cite{pelletier2017battery}). 
Therefore, it is reasonable to keep this assumption in our work, as EAVs are characterized by limited vehicle capacity and constant cruise speed. The triangle inequality is assumed to hold for both $t_{i,j}$ and $b_{i,j}$. At recharging stations, partial recharging is performed with a constant battery recharge rate $\alpha$. This assumption remains reasonable as we allow partial recharging, and the charging process can be approximated by piece-wise linear functions, where the first phase ended at recharging 80\% of $Q$ (\cite{montoya2017electric}). 
To avoid numerical problems that may appear when computing time and battery consumption, all the battery consumption is converted to the time required to recharge the amount of consumption. That is, we define $h_{i,j} =  b_{i,j}/\alpha$ to convert the battery consumption on arc $(i,j)$ to the time required to recharge $b_{i,j}$. Similarly, the current battery charging level can also be converted to the time required to recharge to this battery charging level, and we define $H$ to denote the time required to recharge from zero to full battery capacity $Q$. Moreover, each vehicle has to maintain a certain level of battery when returning to the destination depot. This restriction is introduced to better evaluate the algorithm's performance in solving the E-ADARP. Without this restriction, even a large-sized instance with 5 vehicles and 50 requests can be solved without visiting the recharging station. It is also interesting to analyze the results under different levels of battery restrictions. The minimum battery level that needs to be reached at destination depots is $\gamma Q$, where $\gamma$ is the minimum battery level ratio. Finally, each recharging station can be visited at most $N_s^{max}$ times, where $N_s^{max}$ can be any non-negative integer.


\subsection{Objectives of the E-ADARP}
Different from most works in the context of the DARP, the objective of the E-ADARP is expressed as a weighted sum of the total travel time of all the vehicles and the excess user ride time of all the users. Taking excess user ride time into the objective allows minimizing the user inconvenience directly. 
Related works considering user ride time in the objective function are \cite{parragh2009heuristic} and \cite{molenbruch2017multi}. The objective function is formulated as:
\begin{equation} \label{objective}
     \min w_1\sum\limits_{k \in K}\sum\limits_{i,j \in V}t_{i,j}x_{i,j}^k + w_2\sum\limits_{i \in P}R_i
\end{equation}
where $x_{i,j}^k$ is a binary decision variable which denotes whether vehicle $k$ travels from node $i$ to $j$, $w_1$ and $w_2$ are weight factors. $R_i$ is the excess user ride time of request $i \in P$ and is formulated as the difference between the actual ride time $T_{n+i}^k-(T_{i}^k+s_i)$ and direct travel time $t_{i,n+i}$ and $T_{i}^k$ is the service start time at node $i$. 

Table \ref{tab::notation} summarizes all the notations and definitions for sets and parameters.

\begin{table}[ht]
\renewcommand\arraystretch{0.8}
    \caption{Notations of the E-ADARP}
    \label{tab::notation}
    \begin{center}
    \begin{tabular}{c c}
    \toprule
    Sets& Definitions\\
    \hline
    $N = \{1,\cdots,n,n+1,\cdots,2n\}$& Set of pickup and drop-off nodes \\
    $P=\{1,\cdots,i,\cdots,n\}$& Set of pickup nodes \\
    $D=\{n+1,\cdots,n+i,\cdots,2n\}$& Set of drop-off nodes \\
    $K=\{1,\cdots,k\}$& Set of available vehicles \\
    $O=\{o_1,o_2,\cdots,o_k\}$& Set of origin depots \\
    $F=\{f_1,f_2,\cdots,f_h\}$& Set of all available destination depots (supposing the total number is $h$)\\
    $S=\{s_1,s_2,\cdots,s_g\}$& Set of recharging stations (supposing the total number is $g$) \\
    $V= N \cup S \cup O \cup F$& Set of all nodes \\
    \midrule
    Parameters& Definitions\\
    \hline
    $t_{i,j}$& Travel time from location $i\in V$ to location $j\in V$ \\
    $b_{i,j}$& Battery consumption from location $i\in V$ to location $j\in V$ \\
    $h_{i,j}$& The charging time required to recharge $b_{i,j}, i,j\in V$ \\
    $e_i$& Earliest time at which service starts at $i\in V$ \\
    $l_i$& Latest time at which service starts at $i\in V$ \\
    $s_i$& Service duration at $i\in V$ \\
    $q_i$& Change in load at $i\in N$ \\
    $m_i$& Maximum user ride time of request $i\in P$ \\
    $C_k$& The vehicle capacity  \\
    $Q$& The battery capacity  \\
    $H$& Recharging time required to recharge from zero to $Q$ \\
    $Q_0^k$& The initial battery charging level of vehicle $k$\\
    $\alpha$& The recharged battery per time unit \\
    $\gamma$& The minimum battery level ratio \\
    $w_1, w_2$&The weight factors for total travel time and total excess user ride time \\
    \bottomrule
    \end{tabular}
    \end{center}
\end{table}

\subsection{Constraints of the E-ADARP}
We briefly list the series of constraints that need to be satisfied in the E-ADARP:
\begin{itemize}
\item [1)] Every route starts at an origin depot and ends at a destination depot and has no cycle;
\item [2)] Customer nodes are visited exactly once by all vehicles;
\item [3)] For each request, its pickup node $i \in P$ and drop-off node $n+i$ are served by the same vehicle, and the pickup node is visited before the drop-off node;
\item [4)] The maximum user ride time must be respected for each request;
\item [5)] Each destination depot $f \in F$ can be visited at most once by all vehicles; 
\item [6)] Each recharging station $s \in S$ can be visited at most $N_s^{max}$ times by all vehicles;
\item [7)] The vehicle load cannot exceed the maximum vehicle capacity at each node $i \in V$;
\item [8)] Each node $i \in V$ must be visited within its time window $[e_i,l_i]$. Waiting time occurs when the vehicle arrives earlier than the earliest time window;
\item [9)] The battery charging level at the destination depot must be at least equal to the minimum battery level $\gamma Q$;
\item [10)] The battery charging level at any node along a route must be positive and not exceed the battery capacity $Q$;
\item [11)] The recharging station can only be visited when there is no passenger on board;
\end{itemize}



\subsection{Extended Formulation of the E-ADARP} \label{MP}
The E-ADARP consists of finding a set of E-ADARP routes for EAVs to transport users with specific origin-destination pairs such that each customer $i \in N$ is visited exactly once, and the weighted sum of total travel time and total excess user ride time is minimized. The definition of an E-ADARP route is as follows:
\begin{definition}[E-ADARP route] \label{E-ADARP route}
    An E-ADARP route is a path that starts at an origin depot and ends at a destination depot such that Constraints (1), (3), (4), (6) to (11) are satisfied.
\end{definition}

We obtain the extended formulation (also called ``Master Problem", abbreviated as MP) of the E-ADARP via Dantzig-Wolfe decomposition. The MP is formulated as a set covering problem, where $\Omega$ denotes the set of all E-ADARP routes. 
For each route $\omega \in \Omega$, we define $c_{\omega}$ as the cost of route $\omega$. Note that the route cost accounts for the weighted sum of total travel time and total excess user ride time, as presented in Equation (\ref{objective}). To consider heterogeneous vehicles, we define $\mathcal{T}$ as the set of available vehicle types and classify vehicles with the same capacity into one vehicle type $t \in \mathcal{T}$ (as in \cite{parragh2012models}). Let $\Omega^t$ denote the set of feasible routes for vehicles of type $t \in \mathcal{T}$ and $\Omega = \bigcup_{t \in \mathcal{T}}\Omega^t$ is the set of all feasible routes. We denote $M_t$ as the maximum number of vehicles of type $t$ that can be included in the solution.

We define $\theta_{i\omega}$ as a binary coefficient that equals one if the request $i$ is visited by route $\omega$ (zero otherwise). Let $y_{\omega}$ denote a binary variable that equals one if and only if route $\omega \in \Omega$ is included in the solution (0 otherwise). The number of requests to be served is $n$. To restrict the visits to each destination depot and recharging station, we define binary coefficients $\phi_{f\omega}$ and $\rho_{s\omega}$, determining whether the destination depot $f$ or recharging station $s$ is visited in route $\omega$. As we have multiple origin depots that may be located at different places and cannot be visited repeatedly by different vehicles, we define  $\epsilon_{o \omega}$ to denote whether origin depot $o$ is visited in $\omega$. The objective function of MP is formulated as the total routing cost and we improve the formulation of MP by adding a high penalty $P_i$ for request $i$ not served in the objective function. The penalties for unserved requests are only used to warm-start the problem. For instance, we can start from a heuristic solution of the E-ADARP (e.g., obtained from the DA algorithm of \cite{su2023deterministic}) that does not include all the requests. Also, we introduce a binary variable, denoted as $a_i$, to represent whether request $i$ is visited or not. If $a_i = 1$, request $i$ is omitted, otherwise, request $i$ is visited. The set covering problem is formulated as:
\newline
\begin{equation} \label{1.1}
\min \sum\limits_{\omega \in \Omega}c_{\omega} y_{\omega} + \sum\limits_{i \in P}P_i a_i
\end{equation}

subject to:
\begin{equation}\label{1.2}
 \sum\limits_{\omega \in \Omega}\theta_{i \omega} y_{\omega} \geqslant 1 - a_i, \quad \forall i \in P
\end{equation}
 
\begin{equation}\label{1.3}
 \sum\limits_{\omega \in \Omega}\phi_{f \omega} y_{\omega} \leqslant 1, \quad \forall f \in F
\end{equation}

\begin{equation}\label{1.3.2}
 \sum\limits_{\omega \in \Omega}\rho_{s \omega} y_{\omega} \leqslant N_s^{max}, \quad \forall s \in S
\end{equation}

\begin{equation}\label{type-u specific constraint}
     \sum\limits_{\omega \in \Omega}\epsilon_{o \omega} y_{\omega} \leqslant 1, \quad \forall o \in O
\end{equation}

\begin{equation}\label{1.4}
\sum\limits_{\omega \in \Omega} y_{\omega} \leqslant M_t, \quad \forall t \in \mathcal{T}
\end{equation}

\begin{equation}\label{1.5}
     y_{\omega} \in \{0,1\}, \quad \forall \omega \in \Omega
\end{equation}

\begin{equation}\label{introduced variable}
     a_i \in \{0,1\}, \quad \forall i \in P
\end{equation}

Constraints (\ref{1.2}), (\ref{1.3}), and (\ref{1.3.2}) restrict the visit to each request, destination depot, and recharging station. Constraints (\ref{type-u specific constraint}) and (\ref{1.4}) guarantee that each origin depot appears at most once in the solution and at most $M_t$ of vehicles of type $t$ can be used. Due to the large size of $\Omega$, we cannot solve the MP directly. Instead, we solve the linear relaxation of the MP on a subset of set $\Omega$ (denoted as $\Omega'$), which we call the continuous Restricted Master Problem (hereafter continuous RMP). The subset $\Omega'$ can be generated by CG.

In CG, the continuous RMP and the pricing subproblems are solved iteratively. The subproblems are solved to generate E-ADARP routes with negative reduced costs. These routes are added to $\Omega'$, and the continuous RMP will be solved to update the dual variable values. With the renewed dual variable values, the subproblems will be solved again to find E-ADARP routes with negative reduced costs. The iterative solving of the continuous RMP and subproblems ends when no more negative-reduced-cost columns can be found. In this case, the optimal solution of the continuous MP is found. The integer RMP is solved at the end to obtain integer solutions by using all columns generated. 


\section{Branch-and-Price Algorithm}\label{methodology}
In this section, we present the CG subproblem in the first part. Then, we focus on designing the labeling algorithm to solve the subproblem in Section \ref{labeling}. 
The cutting planes to strengthen the continuous RMP are elaborated in Section \ref{cuts}. The considered branching strategies in the B\&P framework are presented in Section \ref{branches}. 

\subsection{Column Generation Subproblem} \label{CG}
As mentioned, we solve the pricing subproblem in order to identify E-ADARP routes with negative reduced cost $\Bar{c}_{\omega}, \omega \in \Omega$. The reduced cost for an E-ADARP route $\omega$ is formulated as:



\begin{equation}\label{type-u subproblem}
     \bar{c}_{\omega} = c_{\omega} - \sum\limits_{i \in P}\theta_{i\omega}\lambda_i -\sum\limits_{f \in F}\phi_{f \omega}\tau_f - \sum\limits_{s \in S \cup F}\xi_{s \omega}\rho_s - \sum\limits_{o \in O}\epsilon_{o\omega}\zeta_o - \kappa_t
\end{equation}
where $\lambda_i, i \in P$, $\tau_f, f\in F$, $\xi_s, s \in S$, and $\zeta_o, o \in O$ are the dual variable values of constraints (\ref{1.2}) to (\ref{type-u specific constraint}), respectively. The dual variable values associated with constraint (\ref{1.4}) is $\kappa_t$. 

The objective function of the subproblem is:

\begin{equation}
    \displaystyle{\minimize_{\omega \in \Omega} \Bar{c}_{\omega} }
\end{equation}




\subsection{Forward Labeling Algorithm for ESPPRC-MERT} \label{labeling}
We design a customized forward labeling algorithm to solve the pricing sub-problems, which are formulated as Elementary Shortest Path Problems with Resource Constraints and Minimizing Excess Ride Time (hereafter ESPPRC-MERT). This labeling algorithm extends the one tackling E-VRP subproblems in \cite{desaulniers2016exact} by considering the following aspects: 

\begin{itemize}
    \item [1)]
    The characteristics of the DARP are taken into account (i.e., pickup and delivery);
    \item [2)]
    Problem-specific constraints (i.e., minimum-battery-level constraint, maximum user ride time constraint, limited visits to each recharging station) are considered;
    \item [3)]
    Minimizing the total excess user ride time for a partial path.
\end{itemize}

The last point is the most challenging one of solving the ESPPRC-MERT, as the minimum excess user ride time is particularly difficult to be calculated in the extension of labels. This difficulty manifests in two aspects: (1) the minimum excess user ride time can only be determined at nodes where the vehicle has no passenger onboard at arrival/departure; (2) an excess-user-ride-time optimal schedule for a partial path may conflict with time window constraints on succeeding nodes. 


To handle this issue, we construct a new sparser graph $G_{sp}$, where each arc is ensured to be excess-user-ride-time optimal. We propose an efficient labeling algorithm over $G_{sp}$ to compute routes with negative reduced costs. The construction of $G_{sp}$ takes three steps:(1) we generate all battery-restricted fragments (defined in Definition \ref{fragment}); (2) we abstract each fragment to an arc that ensures excess-user-ride-time optimality; (3) we construct $G_{sp}$ by connecting each transformed arc with depots, recharging stations, and other transformed arcs in a feasible way.

This section is organized as follows: in Section \ref{representation}, we introduce a fragment-based representation of paths, which regards fragments as basic components. Then, each partial path is a concatenation of fragments, over which the minimum excess user ride time is determined. In Section \ref{scheduling}, we explain how fragments are abstracted to arcs. 
This abstraction allows us to design a single REF that represents the extension from the start node to the end node of a fragment. In Section \ref{graph construction}, we enumerate all feasible fragments, abstract fragments to arcs, and connect transformed arcs with each other, depots, and recharging stations in a feasible way to construct $G_{sp}$. We show in Theorem \ref{thm:optimization}, $G_{sp}$ preserves all feasible routes of the original one and therefore preserves all negative-reduced-cost routes of the original graph. Finally, we define  labels for nodes on $G_{sp}$ and present their notations and the definitions of their associated resources in Section \ref{labeling algorithm}. The following part includes the label feasibility check, REFs, and dominance rules.

\subsubsection{Representation of Partial Paths.} \label{representation}
One important characteristic of the ESPPRC-MERT is that the reduced cost incorporates the total excess user ride time, which needs to be minimized along the extension. In the classical representation of a partial path, the path is extended in a node-by-node fashion. As mentioned, in the case of open requests existing on the partial path, we cannot calculate the excess user ride time for these open requests. Hence, for the label associated with this partial path, its reduced cost cannot be determined. In order to calculate the minimum excess user ride time in the extension of a partial path, we extend the partial path with battery-restricted fragments, as in \cite{su2023deterministic}. This representation generalizes the notion of fragments as proposed in \cite{rist2021new} by adding battery constraints in the feasibility check. The definition for a battery-restricted fragment is as follows:

\begin{definition}[Battery-restricted fragment, \cite{su2023deterministic}] \label{fragment}
Assuming that $\mathcal{F} = (i_1,i_2, \cdots,i_k)$ is a sequence of pickup and drop-off nodes, where the vehicle arrives empty at $i_1$ and leaves empty at $i_k$ and has passenger(s) on board at other nodes. Then, we call $\mathcal{F}$ a \BRFrag if there exists a feasible route of the form:
$$(o,s_{i_1},\cdots,s_{i_v},\overbrace{i_1,i_2, \cdots,i_k}^{\mathcal{F}},s_{i_{v+1}},\cdots,s_{i_m},f),$$ where $s_{i_1},\cdots,s_{i_v},s_{i_{v+1}},\cdots,s_{i_m} (v,m \geqslant 0)$ are recharging stations, and $o \in O$, $f \in F$.
\end{definition}

It should be noted that if the route is feasible without visiting a recharging station (i.e., $v=m=0$), the battery-restricted fragment is equivalent to the one defined in \cite{rist2021new}. Based on Definition \ref{fragment}, each E-ADARP route can be regarded as the concatenation of an origin depot, battery-restricted fragments (hereinafter referred to as ``fragments''), recharging stations (if required), and a destination depot. Clearly, we can exactly minimize the excess user ride time on each fragment. 

\subsubsection{Abstracting Fragments to Arcs.}
\label{scheduling}
In this section, we present the general method to abstract each fragment $\mathcal{F}$ to an arc that captures all excess-user-ride-time optimal schedules over $\mathcal{F}$. Assuming that a fragment $\mathcal{F}$ is $\{1, 2,\cdots, m\}$, we denote any excess-user-ride-time optimal schedule $\mathcal{A}$ for $\mathcal{F}$ as a set of service start times, namely, $\mathcal{A} = (\mathcal{A}_1, \mathcal{A}_2, \cdots, \mathcal{A}_m)$. Then, to abstract $\mathcal{F}$ to an arc, it is enough to determine all possible values of $\mathcal{A}_1$ and $\mathcal{A}_m$ for any excess-user-ride-time optimal schedule $\mathcal{A}$ over $\mathcal{F}$.  To calculate all possible values of $\mathcal{A}_1$ and $\mathcal{A}_m$, we introduce \emph{vehicle-waiting-time optimal schedules}:

\begin{definition}
A vehicle-waiting-time optimal schedule $\mathcal{B}$ for a fragment $\mathcal{F}$ is defined as a set of service start times $\mathcal{B}_i$, $i \in \mathcal{F}$ that minimize the sum of vehicle waiting times at each node along $\mathcal{F}$ (i.e., $\sum_{i = 2}^m [\mathcal{B}_{i}-(\mathcal{B}_{i-1}+ t_{i-1,i}+s_{i-1})]$). 
\end{definition}
Note that a vehicle-waiting-time optimal schedule is not necessarily an excess-user-ride-time optimal schedule, as the latter one minimizes a weighted sum of waiting times along $\mathcal{F}$, which weight factors are equal to vehicle loads at nodes with waiting time. 
For a given fragment $\mathcal{F}$, we determine two vehicle-waiting-time optimal schedules:
\begin{enumerate}
    \item the ``latest" vehicle-waiting-time optimal schedule $\mathcal{B}^l$;
    \item the ``earliest" vehicle-waiting-time optimal schedule $\mathcal{B}^e$.
\end{enumerate}
We show later in Theorem \ref{thm:segment scheduling} that these schedules $\mathcal{B}^l, \mathcal{B}^e$  determine all possible values of $\mathcal{A}_1$ and $\mathcal{A}_m$ in any excess-user-ride-time optimal schedule $\mathcal{A}$ by the following means:
For any excess-user-ride-time optimal schedule $\mathcal{A}$, there exists $ \delta^l,\delta^e \geqslant 0 $ such that:
    \begin{align*}
    \mathcal{A}_1 = \mathcal{B}_1^l - \delta^l, \ \mathcal{A}_m = \mathcal{B}_m^l - \delta^l;\\
    \mathcal{A}_1 = \mathcal{B}_1^e + \delta^e, \ \mathcal{A}_m = \mathcal{B}_m^e + \delta^e;\\
    \end{align*}

To better illustrate our idea, we take an example as follows:

\begin{exmp} \label{example2}
Given a fragment $\mathcal{F} = \{1+,2+,1-,2-\}$, the time window on each node and the travel time for each arc are shown in Figure \ref{all optimal schedules on fragment}. The dashed lines present the direct travel times from pickup nodes to the corresponding drop-off nodes. Assuming that the service time at each node is equal to zero and each request includes one passenger to be transported.

\begin{figure}[!htp]
\centering
\includegraphics[width=11cm]{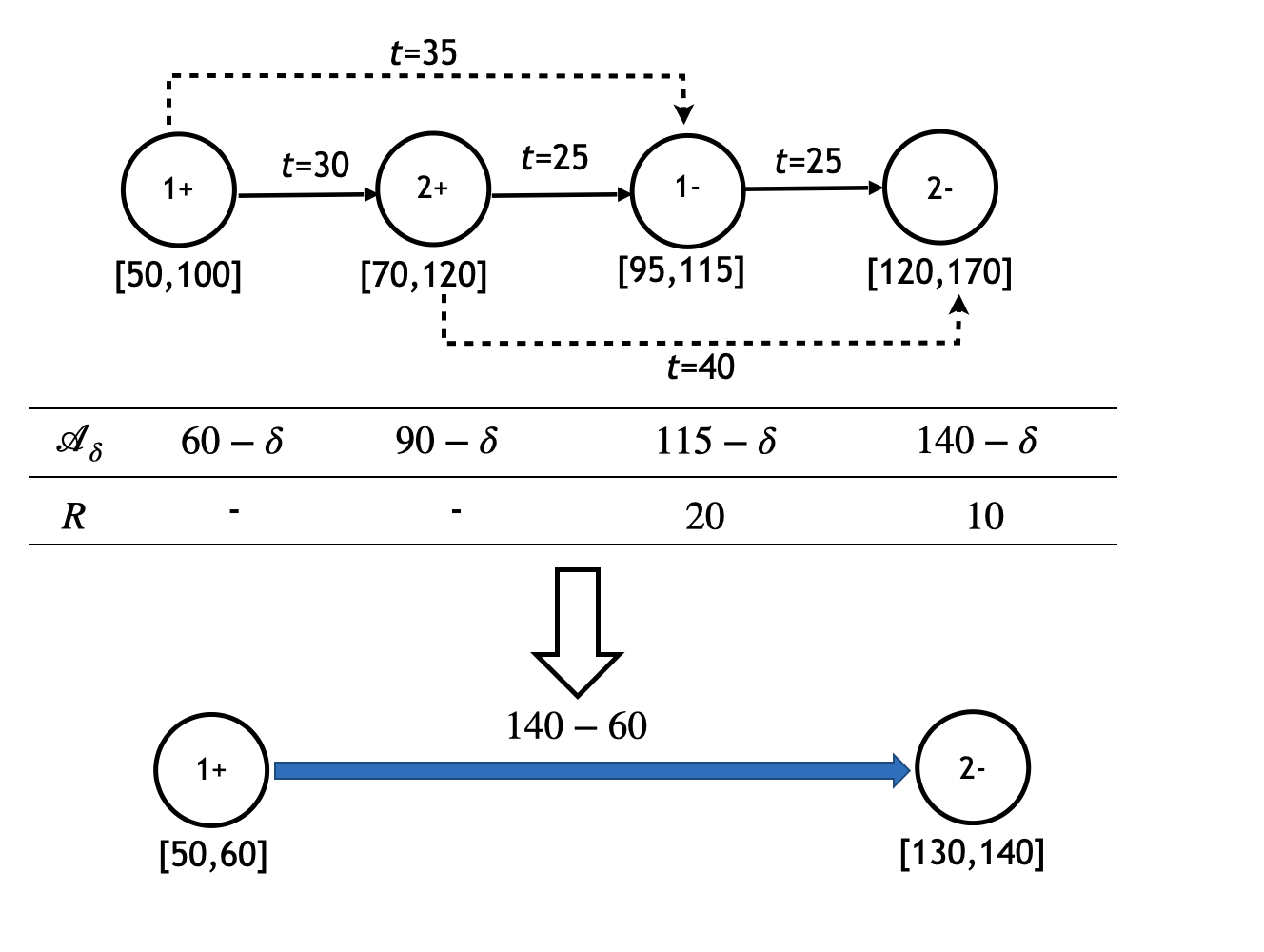}
\caption{\centering Example of abstracting a fragment to an arc}
\label{all optimal schedules on fragment}
\end{figure}


Clearly, any excess-user-ride-time optimal schedules on $\mathcal{F}$ can be represented as $\mathcal{A}_{\delta}$, where $0 \leqslant \delta \leqslant 10$. These schedules have an identical minimum excess user ride time of 30 minutes. Hence, we can abstract $\mathcal{F}$ to an arc from node 1+ and node 2- with time windows being [50,60] and [130,140]. In this example, we have $B_{1}^e = 50, B_{1}^l = 60, B_{m}^e = 130, B_{m}^l = 140$, where node $1$ and $m$ corresponds to node $1+$ and $2-$. This arc captures all excess-user-ride-time optimal schedules $\mathcal{A}_{\delta}$ on $\mathcal{F}$. The travel time from node 1+ to node 2- is presented on the converted arc.

\end{exmp}



Next, for a given fragment $\mathcal{F} = \{1,2,\cdots,m\}$, we present the construction scheme for $\mathcal{B}^l$ and $\mathcal{B}^e$. 

\paragraph{\textbf{Construction of $\mathcal{B}^l$ and $\mathcal{B}^e$}}

The latest vehicle-waiting-time optimal schedule $\mathcal{B}^l$ must obey the following two rules:
\begin{enumerate}
    \item Starting service as late as possible at the first node in $\mathcal{F}$;
    \item Starting service as early as possible at all other nodes in $\mathcal{F}$;
\end{enumerate}

For the first rule, as the vehicle arrives at the first node of $\mathcal{F}$ with no passenger, the delay of service start time at the first node will always help to eliminate unnecessary vehicle waiting time at succeeding nodes. 

As for the second rule, when there is/are passenger(s) on board, it is straightforward to start service as early as possible as in this case to reduce vehicle waiting time. In the following part, we will first construct $\mathcal{B}^l$ and then construct $\mathcal{B}^e$.
\begin{itemize}

    \item \textbf{Construct $\mathcal{B}^l$:} Assuming that a fragment $\mathcal{F}=\{1,2,\cdots,m\}$. Let $\mathcal{B}^l_i$ be the service start time for schedule $\mathcal{B}^l$ at node $i$. Then the arrival time at each node $i$ is $Arr_i = \mathcal{B}^l_{i-1}+t_{i-1,i}+s_{i-1}, 2 \leqslant i \leqslant m$. The waiting time $\Delta_i$ at node $i$ is calculated as $\Delta_i = \mathcal{B}^l_i-Arr_i$.
Based on the proposed rules, we define $\mathcal{B}^l_i$ inductively as follows:
\begin{enumerate}
    \item $\mathcal{B}^l_1 = l_1$;
    \item assuming $\mathcal{B}^l_i$ has been defined for $i< v$, we define $\mathcal{B}^l_v$ by:
    \begin{enumerate}
        \item if the extension from node $(v-1)$ respects the time window constraint at node $v$ (i.e., $Arr_v \leq l_v $), then we define $\mathcal{B}^l_v = \max \{ e_v, Arr_v\}$;
        \item Otherwise, 
        \begin{itemize}
            \item if $\min \limits_{i<v}\{\mathcal{B}^l_{i}-e_i\}\geq Arr_v-l_v$,  we can update the schedule at nodes $1,\cdots, v-1$ by moving forward $Arr_v-l_v$. Then the extension from node $(v-1)$ to node $v$ will not violate the time window constraint. I.e.,  we update $\mathcal{B}^l_i$ as $\mathcal{B}^l_i- (Arr_v-l_v)$ for $i = 1,\cdots , v-1 $ and define $\mathcal{B}^l_v = l_v$;
            \item Otherwise, there is no feasible schedule for $\{1,2,\cdots,v\}$.
        \end{itemize}
    \end{enumerate}
\end{enumerate}

For the last point, the maximum amount of time that can be moved forward from node $1$ to node $(v-1)$ is $\min \limits_{i<v}\{\mathcal{B}^l_{i}-e_i\}$. To satisfy the time window constraint at node $v$, the minimum amount of time that is required to be moved forward is $Arr_v-l_v$. In the case of $\min \limits_{i<v}\{\mathcal{B}^l_{i}-e_i\} < Arr_v-l_v$, the time window constraint at node $v$ is never fulfilled.

\item \textbf{Construct $\mathcal{B}^e$:} After determining the latest vehicle-waiting-time optimal schedule $\mathcal{B}^l$ on $\mathcal{F}$, the earliest vehicle-waiting-time optimal schedule $\mathcal{B}^e$ is determined by moving forward $\mathcal{B}^l$ on $\mathcal{F}$ by the maximum amount of time (denoted as $\mathop{\Delta}\limits ^{\leftarrow}$) that will not change the minimum vehicle waiting time. 
$\mathop{\Delta}\limits ^{\leftarrow}$ can be calculated by taking the minimum value among all the $\mathcal{B}_i^l-e_i$, for $i \in \mathcal{F}$. That is: $\mathop{\Delta}\limits ^{\leftarrow} = \min \limits_{i \in \mathcal{F}}\{\mathcal{B}_i^l-e_i\}$, and $\mathcal{B}_i^e=\mathcal{B}_i^l-\mathop{\Delta}\limits^{\leftarrow}$, $ i \in \mathcal{F}$.

\end{itemize}

We recall the previous example to present the construction of $\mathcal{B}^l$ and $\mathcal{B}^e$:
\begin{exmp}[Example \ref{example2} continued] \label{example3}
For the given fragment $\mathcal{F}$, $\mathcal{B}^l$ and $\mathcal{B}^e$ for fragment $\mathcal{F}$ are directly shown in Figure \ref{example_schedule}. In this example, $\mathop{\Delta}\limits ^{\leftarrow} = 10$ and $\mathcal{B}_i^e=\mathcal{B}_i^l-10$, $ i \in \mathcal{F}$. Interested readers are invited to derive these results by themselves. 
\begin{figure}[htp]
\centering
\includegraphics[width=11cm]{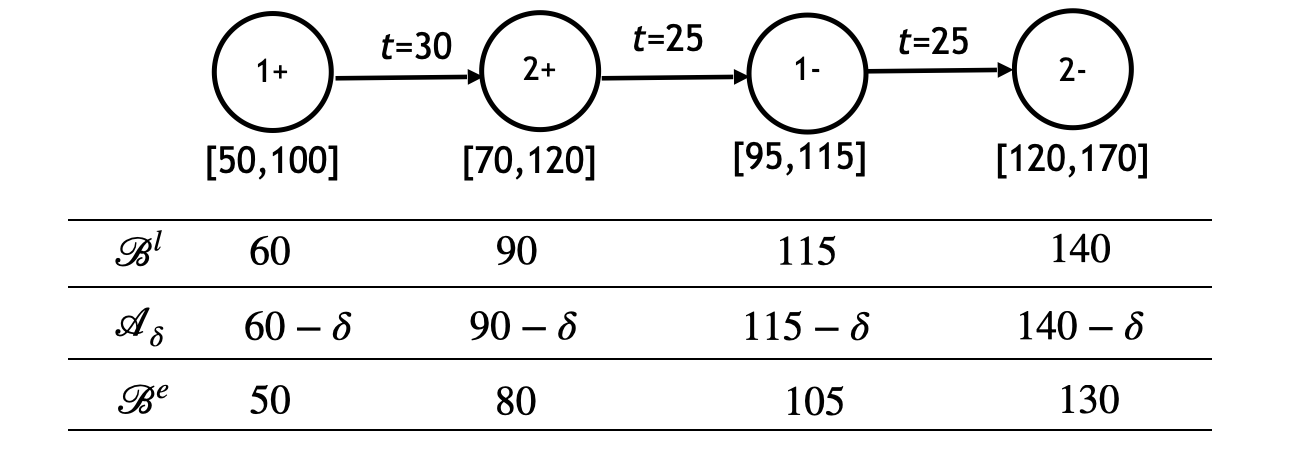}
\caption{\centering Example of any excess-user-ride-time optimal schedule on a fragment}
\label{example_schedule}
\end{figure}


\end{exmp}

Now, we prove that $\mathcal{B}^l, \mathcal{B}^e$ can determine all possible values of $\mathcal{A}_1$ and $\mathcal{A}_m$ of any excess-user-ride-time optimal schedule $\mathcal{A}$. 


\begin{theorem} \label{thm:segment scheduling}
Assuming that fragment $\mathcal{F} = \{1, 2,\cdots, m\}$, $\mathcal{A}$ is an \textbf{excess-user-ride-time optimal} schedule, $\mathcal{B}^l$ is the constructed latest vehicle-waiting-time optimal schedule over $\mathcal{F}$, and $\mathcal{B}^e$ is the earliest vehicle-waiting-time optimal schedule. Then there exists $\delta^e, \delta^l \geq 0$ such that:
\begin{align*}
    \mathcal{A}_1 = \mathcal{B}_1^e + \delta^e, \ \mathcal{A}_m = \mathcal{B}_m^e + \delta^e;\\
       \mathcal{A}_1 = \mathcal{B}_1^l - \delta^l, \ \mathcal{A}_m = \mathcal{B}_m^l - \delta^l.\\
\end{align*}

\end{theorem}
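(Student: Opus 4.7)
The plan is to prove the theorem by showing that any excess-user-ride-time optimal schedule $\mathcal{A}$ must satisfy: (A) it is vehicle-waiting-time optimal, i.e., $\mathcal{A}_m - \mathcal{A}_1 = \mathcal{B}_m^l - \mathcal{B}_1^l$; and (B) $\mathcal{A}_1 \in [\mathcal{B}_1^e, \mathcal{B}_1^l]$. Once both are established, setting $\delta^e := \mathcal{A}_1 - \mathcal{B}_1^e$ and $\delta^l := \mathcal{B}_1^l - \mathcal{A}_1$ (both nonnegative by (B)) yields the two claimed identities, using that $\mathcal{B}^e = \mathcal{B}^l - \mathop{\Delta}\limits^{\leftarrow}$ is a uniform shift and hence $\mathcal{B}_m^e - \mathcal{B}_1^e = \mathcal{B}_m^l - \mathcal{B}_1^l$.

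The key preliminaries are two identities I would establish first. With $\Delta_i = \mathcal{A}_i - \mathcal{A}_{i-1} - t_{i-1,i} - s_{i-1}$, total vehicle waiting time equals $\sum_{i=2}^m \Delta_i = (\mathcal{A}_m - \mathcal{A}_1) - \sum_{i=2}^m(t_{i-1,i} + s_{i-1})$, so $\mathcal{B}^l$ and $\mathcal{B}^e$ being vehicle-waiting-time optimal is equivalent to their attaining the minimum span. Second, because a fragment carries passengers strictly between its endpoints, total excess user ride time over requests served in $\mathcal{F}$ can be rewritten as $\sum_{i=2}^m q_{i-1}^+ \Delta_i$ up to a schedule-independent constant, where every weight $q_{i-1}^+$ is \emph{strictly positive}.

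For (A), I would argue by contradiction: if $\mathcal{A}_m - \mathcal{A}_1 > \mathcal{B}_m^l - \mathcal{B}_1^l$, then $\sum_i \Delta_i^{\mathcal{A}}$ strictly exceeds its minimum, and the goal is to exhibit an index $j$ and $\varepsilon > 0$ such that advancing $\mathcal{A}_1, \ldots, \mathcal{A}_{j-1}$ uniformly by $\varepsilon$ stays feasible, reduces $\Delta_j$ by $\varepsilon$, and leaves every other $\Delta_k$ unchanged. Since $q_{j-1}^+ > 0$, the weighted waiting then strictly decreases by $q_{j-1}^+ \varepsilon$, contradicting optimality. Verifying that such a $(j, \varepsilon)$ always exists when the span is non-minimum is the main technical obstacle: it requires a careful case analysis on the pattern of tight upper time-window constraints $\mathcal{A}_k = l_k$ for $k < j$, combined with the positivity of every weight $q_{k-1}^+$, to argue that at least one feasible single-block or multi-block shift strictly decreases weighted waiting; in the degenerate case where no such shift is available anywhere, $\mathcal{A}$ is shown to satisfy the late-start/early-propagate backtracking recurrence defining $\mathcal{B}^l$ and therefore already has minimum span.

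For (B), the upper bound $\mathcal{A}_1 \leq \mathcal{B}_1^l$ is immediate because $\mathcal{B}_1^l$ is by construction the latest feasible start at node 1. For the lower bound, I would let $i^*$ be the \emph{largest} index attaining $\mathop{\Delta}\limits^{\leftarrow} = \min_k(\mathcal{B}_k^l - e_k)$, so that $\mathcal{B}_{i^*}^e = e_{i^*}$; the construction of $\mathcal{B}^l$ forces $\Delta_k^{\mathcal{B}^l} > 0$ to occur only at indices with $\mathcal{B}_k^l = e_k$, hence $\Delta_k^{\mathcal{B}^l} = 0$ for every $k > i^*$. Forward propagation $\mathcal{A}_{i^*} = \mathcal{A}_1 + \sum_{k=2}^{i^*}(t_{k-1,k} + s_{k-1}) + \sum_{k=2}^{i^*} \Delta_k^{\mathcal{A}}$ combined with $\mathcal{A}_{i^*} \geq e_{i^*}$, the expansion $e_{i^*} = \mathcal{B}_1^e + \sum_{k=2}^{i^*}(t_{k-1,k} + s_{k-1}) + \sum_{k=2}^{i^*}\Delta_k^{\mathcal{B}^l}$, and the equal-totals identity $\sum_{k=2}^m \Delta_k^{\mathcal{A}} = \sum_{k=2}^m \Delta_k^{\mathcal{B}^l}$ supplied by (A), then simplifies to $\mathcal{A}_1 \geq \mathcal{B}_1^e + \sum_{k > i^*} \Delta_k^{\mathcal{A}} \geq \mathcal{B}_1^e$, completing the proof.
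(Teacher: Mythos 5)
Your reduction of the theorem to the two claims (A) ``excess-ride-time optimal $\Rightarrow$ minimum span'' and (B) ``$\mathcal{A}_1 \in [\mathcal{B}_1^e, \mathcal{B}_1^l]$'' is a valid and genuinely different organization from the paper's proof, and your argument for (B) is clean and correct: the telescoping identity through the largest index $i^*$ with $\mathcal{B}^e_{i^*}=e_{i^*}$, together with $\Delta_k(\mathcal{B}^l)=0$ for $k>i^*$, does yield $\mathcal{A}_1 \ge \mathcal{B}_1^e$ once (A) is in hand. The weighted-waiting reformulation of the excess ride time with strictly positive leg loads, and the observation that delaying a prefix $1,\dots,j-1$ changes the objective by exactly $-\varepsilon\, q_{j-1}^{+}$, are also exactly the right ingredients.

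The genuine gap is in (A), which is the heart of the theorem, and you have deferred precisely the step that carries all the difficulty. You assert that whenever the span is non-minimal there exists a feasible block shift that strictly decreases the weighted waiting, but you do not exhibit it; the ``careful case analysis on the pattern of tight constraints $\mathcal{A}_k=l_k$'' is not carried out, and the fallback claim --- that if no improving shift exists anywhere then $\mathcal{A}$ satisfies the recurrence defining $\mathcal{B}^l$ and hence has minimum span --- is unproven and, as stated, too strong (a locally unimprovable schedule need not coincide with the greedy late-start/early-propagate schedule). The paper closes exactly this hole by comparing $\mathcal{A}$ index-by-index with $\mathcal{B}^l$: if $\mathcal{A}_1<\mathcal{B}^l_1$, the strict inequality $\mathcal{B}^l_i>\mathcal{A}_i$ propagates forward through every node where $\mathcal{A}$ does not wait, and since $\mathcal{B}^l$ has a waiting node $v$ with $\mathcal{B}^l_v=e_v\le\mathcal{A}_v$, the propagation must break at some $v_0\le v$ where $\Delta_{v_0}(\mathcal{A})>0$; the accumulated slack $\min_{i<v_0}(\mathcal{B}^l_i-\mathcal{A}_i)>0$ then certifies feasibility of the prefix delay, and $\Delta_{v_0}(\mathcal{A})>0$ certifies strict improvement. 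A symmetric backward propagation handles $\mathcal{A}_m>\mathcal{B}^l_m$ via a suffix advance (this second case is absent from your sketch, which only describes prefix moves), and the case where $\mathcal{B}^l$ has no waiting at all is immediate because zero weighted waiting is then attainable. Without this anchoring against $\mathcal{B}^l$ --- or some equivalent device that simultaneously produces the improving index and the feasibility margin --- your proof of (A), and hence of the theorem, is incomplete.
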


\begin{proof}[Proof of Theorem \ref{thm:segment scheduling}]
  In the case of $\Delta_u(\mathcal{B}^l)=0$ for all $1\leq u\leq m$ ($\Delta_i(\cdot)$ is the waiting time at node $i$ according to a given schedule), then the theorem clearly holds. Next, we assume that there exists $1\leq v\leq m$ such that $\Delta_v(\mathcal{B}^l) \neq 0$. We will show that  $\mathcal{A}_1 = \mathcal{B}^l_1, \mathcal{A}_m = \mathcal{B}^l_m$.

The proof contains two parts.

\begin{itemize}
    \item $\mathcal{A}_1 = \mathcal{B}^l_1$. According to the first construction rule of $\mathcal{B}^l$, we have $\mathcal{B}^l_1 \geq \mathcal{A}_1$. Next, we prove $\mathcal{B}^l_1 \leq \mathcal{A}_1$ by contradiction.
    
    Assuming that $\mathcal{B}^l_1 > \mathcal{A}_1$, then if $\Delta_{2}(\mathcal{A})=0$, we must have $\mathcal{B}^l_2 > \mathcal{A}_2\geq e_2$, therefore $\Delta_{2}(\mathcal{B}^l)=0$ by our construction. Moreover, if $\Delta_{3}(\mathcal{A})=0$, we must have $\mathcal{B}^l_3 > \mathcal{A}_3\geq e_3$, therefore $\Delta_{3}(\mathcal{B}^l)=0$. 
    
    Repeat the above process, since $\Delta_v(\mathcal{B}^l) \neq 0$, there exists $v_0\leq v$ such that: $\mathcal{B}^l_i > \mathcal{A}_i, \Delta_{i}(\mathcal{A})=0$ for $1\leq i< v_0$, $\Delta_{v_0}(\mathcal{A})>0$.
    Then the total excess user ride time of $\mathcal{A}$ can be further reduced by \textbf{delaying} the service start time by:
    $$min\{min_{1\leq i<v_0}\{\mathcal{B}^l_i- \mathcal{A}_i\}, \Delta_{v_0}(\mathcal{A})\}$$
    in $1, \cdots, v_0-1$. $\mathcal{A}$ is not an optimal plan, which is a contradiction!
    
    \item $\mathcal{B}^l_m = \mathcal{A}_m$. Since there exists $1\leq v\leq m$ such that $\Delta_v(\mathcal{B}^l) \neq 0$, we have $\mathcal{B}^l_v = e_v$ at node $v$. According to the second construction rules of $\mathcal{B}^l$, we also have $\mathcal{B}^l_m \leq \mathcal{A}_m$. Next, we prove $\mathcal{B}^l_m \geq \mathcal{A}_m$ by contradiction.
    
    Assuming that $\mathcal{B}^l_m < \mathcal{A}_m$, if $\Delta_{m}(\mathcal{A})=0$, we must have $ \mathcal{B}^l_{m-1} < \mathcal{A}_{m-1}$. 
    Moreover, if $\Delta_{m-1}(\mathcal{A})=0$, we must have $\mathcal{B}^l_{m-2} < \mathcal{A}_{m-2}$. 
    Since $\mathcal{B}^l_1 = \mathcal{A}_1$ as we proved above, there must exists $2\leq v_1 \leq m$ such that: $\mathcal{B}^l_i < \mathcal{A}_i,  \Delta_{i}(\mathcal{A})=0$ for $v_1< i\leq m$ and $e_{v_1}\leq \mathcal{B}^l_{v_1} < \mathcal{A}_{v_1}, \Delta_{v_1}(\mathcal{A})>0$.
  Then the excess user ride time of $\mathcal{A}$ can be further reduced by \textbf{moving forward} the service start time by:
    $$min\{min_{v_1\leq i\leq m}\{\mathcal{A}_i- \mathcal{B}^l_i\}, \Delta_{v_1}(\mathcal{A})\}$$
    in $v_1, \cdots,m$. $\mathcal{A}$ is not an optimal plan, which is a contradiction!
    
\end{itemize}
\end{proof}

Theorem \ref{thm:segment scheduling} also implies that as soon as the excess-user-ride-time optimal schedule for fragment $\mathcal{F}$ contains waiting time, we have $\mathcal{B}^l = \mathcal{B}^e$ and $\delta^l = \delta^e = 0$. In this case, any excess-user-ride-time optimal schedule (denoted as $\mathcal{A}$) must satisfy $\mathcal{A}_1 = \mathcal{B}^l_1 = \mathcal{B}^e_1$ and $\mathcal{A}_m = \mathcal{B}^l_m = \mathcal{B}^e_m$, where node 1 and node $m$ are the first and the last node of $\mathcal{F}$. 
In the other case, an excess-user-ride-time optimal schedule can be obtained by moving forward(backward) the vehicle-waiting-time optimal schedule $\mathcal{B}^l$ ($\mathcal{B}^e$) by $\delta$, such that $0 \leqslant \delta \leqslant \mathcal{B}^l_1-\mathcal{B}^e_1$.


\paragraph{\textbf{Abstracting a Fragment to an Arc.}}
For each fragment $\mathcal{F}=\{1, \cdots, m\}$, assuming $\mathcal{B}^e,\mathcal{B}^l$ are the corresponding earliest and latest vehicle-waiting-time optimal schedules. Based on Theorem \ref{thm:segment scheduling}, restricting time windows at node $1$ and node $m$ to $[\mathcal{B}^e_1, \mathcal{B}^l_1]$ and $[\mathcal{B}^e_m, \mathcal{B}^l_m]$ will include all excess-user-ride-time optimal schedules on fragment $\mathcal{F}=\{1, \cdots, m\}$. Then we can abstract $\mathcal{F}$ to an arc $(1, m)$ such that: 
\begin{enumerate}
    \item the total travel time from $1$ to $m$ (denoted as $t_{1,m}'$) is $\mathcal{B}^l_m-\mathcal{B}^l_1$;
    \item the original time windows of node 1 and node $m$ are restricted to  $[\mathcal{B}^e_1, \mathcal{B}^l_1]$ and $[\mathcal{B}^e_m, \mathcal{B}^l_m]$;
    \item the battery consumption from $1$ to $m$ is $\sum \limits_{i = 1}^{m-1}h_{i,i+1}$;
\end{enumerate}

If no waiting time is generated on $\mathcal{F}$, we can calculate the minimum excess user ride time directly. It is also straightforward to compute the minimum excess user ride time for $\mathcal{F}$ that contains one request with waiting time generated. When waiting time is generated on a fragment containing more than two requests, calculating the value of minimum excess user ride time becomes difficult. In this case, we improve the calculation process in \cite{su2023deterministic} by setting the service start times at node 1 and node $m$ to $\mathcal{B}_1^l$ and $\mathcal{B}_m^l$, respectively. Then, we solve a Linear Program (LP) to determine the minimum excess user ride time, as presented in \ref{case 2 solving}.

\subsubsection{Constructing a New Sparser Graph.} \label{graph construction}
 In this section, we construct a new sparser graph $G_{sp}$ by two steps: (1) we enumerate all feasible fragments and abstract them to arcs; (2) we connect each transformed arc with depots, recharging stations, and other transformed arcs in a feasible way.
 
The fragment enumeration is conducted with depth-first search as in \cite{alyasiry2019exact}. For each feasible fragment, the corresponding restricted time windows and minimum excess user ride times are recorded. To generate all feasible fragments, we assume that the vehicle departs from each pickup node with a full battery level and must respect constraints of maximum user ride time, battery capacity, time window, pairing, precedence, and vehicle capacity. We start from each pickup node and extend it node by node until no more feasible fragment that starts from this pickup node can be generated. By enumerating all feasible fragments before computation, we largely accelerate the labeling algorithm as we only need to query information instead of recalculating. To provide more details, we conduct a preliminary test for fragment enumeration on each instance in \ref{preliminary for frag enumeration}. For all the instances, the fragment enumeration can be fulfilled in a matter of seconds. In the computational experiments, we report the CPU time, which includes the computational time for fragment enumeration. With the information of all feasible fragments, we abstract fragments to arcs as presented in Section \ref{scheduling}.

Then, we construct $G_{sp}$ by connecting depots, recharging stations, and the start nodes and end nodes of transformed arcs in a feasible way. Details for the connection between nodes are as follows:
\begin{enumerate}
    \item Each origin depot connects with all start nodes of arcs, recharging stations, and destination depots;
    \item Each recharging station connects with start nodes of arcs and destination depots in a feasible way;
    \item Each end node of an arc connects with destination depots, recharging stations, and all the start nodes of arcs in a feasible way;
\end{enumerate}


Figure \ref{new graph2} shows an example of constructing new arcs in $G_{sp}$. It should be noted that for two different fragments, even though they have the same start node $i+$ and end node $j-$, we need to treat them as two different arcs in $G_{sp}$ as they represent different fragments consisting of different sequences of nodes, which lead to different restricted time windows. To distinguish, we make copies (e.g., $i'+$, $j'+$) for node $i+$ and node $j-$ and we generate two arcs $(i+,j-)$ and $(i'+,j'-)$.

\begin{figure}[!htp]
\centering
\includegraphics[width=14cm]{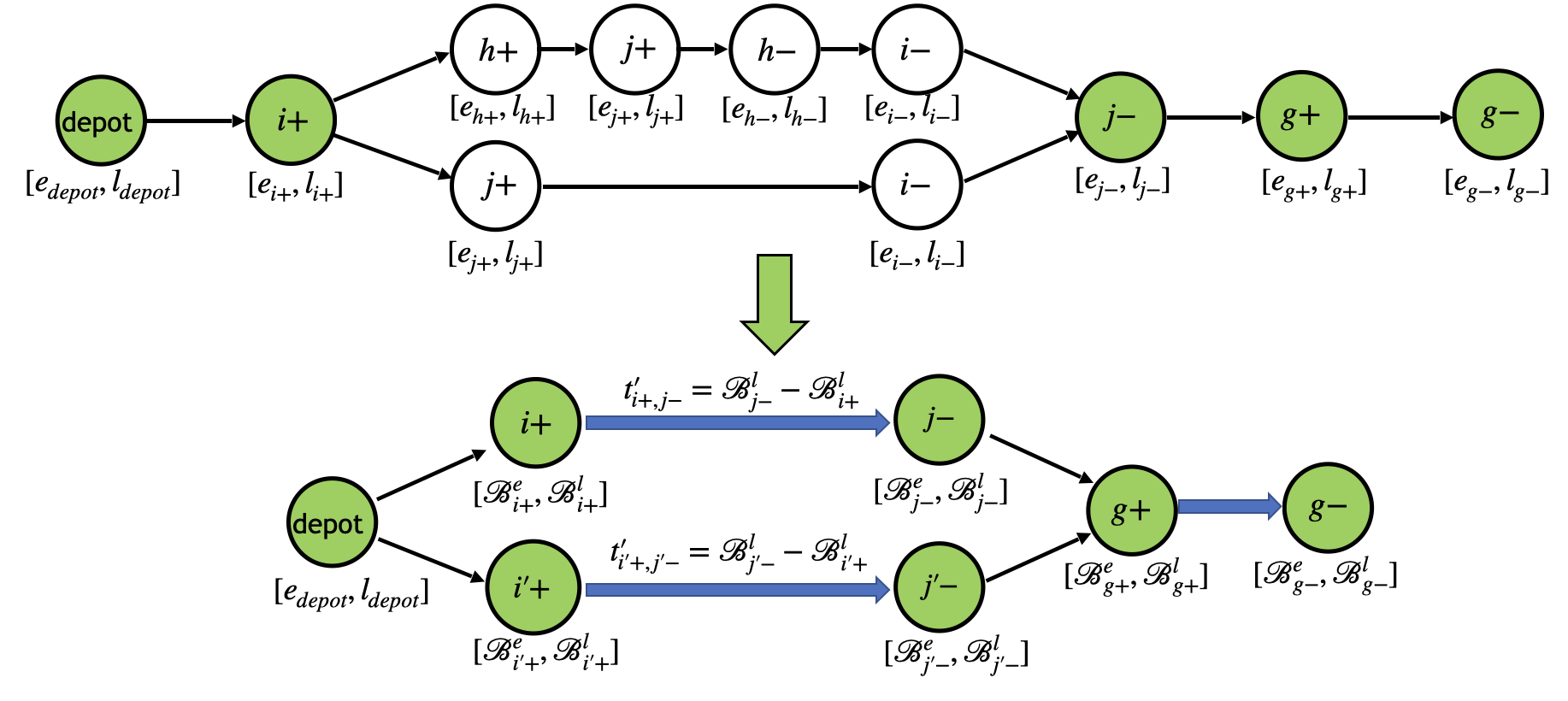}
\caption{\centering Example of constructing arcs in the new graph $G_{sp}$}
\label{new graph2}
\end{figure}

Note that our way of constructing the network is completely different from \cite{alyasiry2019exact}, who build a network by discretizing the time windows of all pickup and delivery nodes with a fixed length of time $\eta$. 
However, we make copies $(i+',j+')$ to distinguish different fragments with the same start/end nodes as they have different restricted time windows.

Next, we work on the new sparser graph $G_{sp}$ instead of $G$, as we show in Theorem \ref{thm:optimization} that $G_{sp}$ preserves all feasible routes over $G$.
\begin{theorem}\label{thm:optimization}
Let $\mathcal{R}$ be a route over graph $G$, and $\mathcal{R}_{sp}$ be the corresponding route over $G_{sp}$. Then $\mathcal{R}$ is feasible if and only if $\mathcal{R}_{sp}$  is feasible.

\end{theorem}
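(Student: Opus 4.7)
The plan is to prove both directions by transferring schedules between the two graphs, using Theorem \ref{thm:segment scheduling} as the main tool together with the fact that battery consumption inside a fragment is monotone.

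For the $(\Leftarrow)$ direction, I would start from a feasible schedule on $\mathcal{R}_{sp}$ and lift it to a feasible schedule on $\mathcal{R}$ by expanding each abstracted arc back into its underlying fragment. For the arc associated with fragment $\mathcal{F}=\{1,\ldots,m\}$, the endpoint service times supplied by the schedule on $\mathcal{R}_{sp}$ lie in $[\mathcal{B}^e_1,\mathcal{B}^l_1]$ and $[\mathcal{B}^e_m,\mathcal{B}^l_m]$ with the correct spacing $\mathcal{B}^l_m-\mathcal{B}^l_1$. By Theorem \ref{thm:segment scheduling}, this endpoint pair arises from an excess-user-ride-time optimal schedule $\mathcal{A}$ on $\mathcal{F}$, which I use to populate the interior service times. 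Time-window, maximum-user-ride-time, pairing, precedence, and capacity constraints inside $\mathcal{F}$ then hold because $\mathcal{F}$ was retained by the enumeration procedure of Section \ref{graph construction}; battery feasibility transfers because the abstracted arc carries total consumption $\sum_{i=1}^{m-1}h_{i,i+1}$ and, since no recharging occurs inside a fragment, the battery level is monotonically non-increasing along $\mathcal{F}$, so feasibility at node $m$ implies feasibility at every interior node. Depot visits and recharging-station stops are identical in $\mathcal{R}$ and $\mathcal{R}_{sp}$, so the inter-fragment constraints carry over unchanged.

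For the $(\Rightarrow)$ direction, I would start from a feasible schedule $T$ on $\mathcal{R}$ and build a feasible schedule on $\mathcal{R}_{sp}$ by replacing, fragment by fragment, the induced schedule $(T_1,\ldots,T_m)$ with an excess-user-ride-time optimal schedule $(T'_1,\ldots,T'_m)$ satisfying $T'_1\geq T_1$ and $T'_m\leq T_m$. Such a substitution only relaxes the time-availability constraint at the preceding node and the arrival constraint at the succeeding node, so the overall schedule remains feasible. By Theorem \ref{thm:segment scheduling}, the excess-user-ride-time optimal schedules form the one-parameter family $\{\mathcal{B}^l-\delta:0\leq\delta\leq \mathcal{B}^l_1-\mathcal{B}^e_1\}$, so the required $\delta$ must satisfy $\mathcal{B}^l_m-T_m\leq\delta\leq \mathcal{B}^l_1-T_1$; this interval is non-empty precisely when $T_1\leq \mathcal{B}^l_1$ and $T_m\geq \mathcal{B}^e_m$. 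Battery feasibility on $\mathcal{R}_{sp}$ again follows from battery feasibility on $\mathcal{R}$ since the abstracted arc carries the same total consumption as the fragment.

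The main obstacle is verifying these two bracketing inequalities for every feasible $T$. The first, $T_1\leq \mathcal{B}^l_1$, is direct: $\mathcal{B}^l_1$ is by construction the latest service start at node $1$ admitting a feasible extension to $m$, so any feasible $T$ on $\mathcal{F}$ must satisfy $T_1\leq \mathcal{B}^l_1$. The second, $T_m\geq \mathcal{B}^e_m$, amounts to identifying $\mathcal{B}^e_m$ with the earliest achievable $T_m$ on $\mathcal{F}$. I would prove this by unrolling the recursion $\mathcal{B}^l_v=\max(e_v,\mathcal{B}^l_{v-1}+t_{v-1,v}+s_{v-1})$, which is preserved by all backshifts in the construction of $\mathcal{B}^l$ (a backshift at some node $v$ can only be executed when no earlier node is waiting, so the recursion survives the uniform decrement on $\mathcal{B}^l_1,\ldots,\mathcal{B}^l_{v-1}$), obtaining $\mathcal{B}^l_m=\max\bigl(\mathcal{B}^l_1+\Sigma_{1,m},\ \max_{2\leq i\leq m}(e_i+\Sigma_{i,m})\bigr)$ with $\Sigma_{i,m}=\sum_{j=i}^{m-1}(t_{j,j+1}+s_j)$, and then applying the final shift by $\mathop{\Delta}\limits^{\leftarrow}=\min_i(\mathcal{B}^l_i-e_i)$ to rewrite $\mathcal{B}^e_m=\max_{1\leq i\leq m}(e_i+\Sigma_{i,m})$. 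The right-hand side is the standard path-lower bound that any feasible schedule must satisfy at node $m$, which closes the argument.
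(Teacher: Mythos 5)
Your proof is correct and reaches the theorem by the same overall reduction as the paper --- replace the schedule on each fragment by a shifted latest vehicle-waiting-time optimal schedule $\mathcal{B}^l-\delta$, and reduce feasibility of the replacement to the two bracketing inequalities $T_1\leq\mathcal{B}^l_1$ and $T_m\geq\mathcal{B}^e_m$ --- but you establish the second inequality by a genuinely different argument. The paper proves it through Lemma~\ref{claim}, a case analysis on whether $\mathcal{B}^l$ contains waiting time and, in the no-waiting case, on whether $\mathcal{A}_1\geq\mathcal{B}^e_1$, choosing $\delta_0$ as the maximal feasible forward shift. You instead unroll the recursion defining $\mathcal{B}^l$ (correctly observing that a nontrivial backshift can only fire when $\min_{i<v}(\mathcal{B}^l_i-e_i)>0$, so the recursion is preserved) to obtain the closed form $\mathcal{B}^e_m=\max_{1\leq i\leq m}\bigl(e_i+\Sigma_{i,m}\bigr)$, which is the universal earliest-arrival lower bound satisfied by every feasible schedule. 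This buys a cleaner, case-free proof of the key inequality and an independently useful characterization of $\mathcal{B}^e_m$; the paper's version is shorter to state because it leans directly on the machinery already built for Theorem~\ref{thm:segment scheduling}. Two small points: your claim that the admissible interval for $\delta$ is non-empty ``precisely when'' $T_1\leq\mathcal{B}^l_1$ and $T_m\geq\mathcal{B}^e_m$ silently omits the third condition $T_m-T_1\geq\mathcal{B}^l_m-\mathcal{B}^l_1$; it does hold (from elementary feasibility of $T$ when $\mathcal{B}^l$ has no waiting, and from the other two conditions when it does, since then $\mathcal{B}^e=\mathcal{B}^l$), but it should be checked rather than absorbed into ``precisely.'' Also, your expansion of the reverse direction is more explicit than the paper's one-line ``clearly,'' though the handling of a schedule on $\mathcal{R}_{sp}$ whose endpoint spacing exceeds $t'_{1,m}$ (waiting at the arc's end node) is still somewhat informal.
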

\begin{proof}[Proof of Theorem \ref{thm:optimization}]
Clearly, if $\mathcal{R}_{sp}$ is feasible then $\mathcal{R}$ is feasible. Therefore, we only need to prove the other direction.

Now, we assume that $\mathcal{R}$ is feasible and $\mathcal{A}$ is a \textbf{feasible} schedule over  $\mathcal{R}$. It is enough to show that there is also a \textbf{feasible} schedule over  $\mathcal{R}_{sp}$. This is implied by the following lemma:

\begin{lemma}\label{claim}
For any fragment $\mathcal{F} = \{1, 2,\cdots, m\}$ in $\mathcal{R}$. Let $\mathcal{B}^e, \mathcal{B}^l$ be the constructed vehicle-waiting-time optimal schedule over $\mathcal{F}$. Then there exists $0\leq \delta \leq \mathcal{B}^l_1 - \mathcal{B}^e_1$ such that $\mathcal{A}_1\leq \mathcal{B}^{l}_1 - \delta , \mathcal{B}^{l}_m  - \delta \leq \mathcal{A}_m$.
\end{lemma}

Then we can obtain a feasible schedule over $\mathcal{R}_{sp}$ by replacing the schedule of every fragment $\mathcal{F} = \{1, 2,\cdots, m\}$ to the schedule $\{\mathcal{B}^{l}_1 - \delta , \mathcal{B}^{l}_m  - \delta\}$ over the arc generated from $\mathcal{F}$.

\end{proof}
\begin{proof}[Proof of Lemma \ref{claim}]
According to the construction rules of $\mathcal{B}^l$, we always have $\mathcal{A}_1\leq \mathcal{B}^l_1$. Let $\mathcal{B}^{\delta}$ be the vehicle-waiting-time optimal schedule obtained by moving forward the service begin time by $\delta$ from $\mathcal{B}^l$ and $\Delta_v(\mathcal{B}^l)$ denote the waiting time at node $v$ according to $\mathcal{B}^l$. 

In the proof, we show that we can find a $\delta_0$ satisfying $0\leq \delta_0 \leq \mathcal{B}^l_1 - \mathcal{B}^e_1$ such that $\mathcal{A}_1\leq \mathcal{B}^{\delta_0}_1,  \mathcal{B}^{\delta_0}_m \leq \mathcal{A}_m$. There are two cases:
\begin{enumerate}
    \item If $\Delta_v(\mathcal{B}^l) \neq 0$ for some $1\leq v\leq m$, then it is enough to take $\delta_0 = 0$ as shown in Theorem \ref{thm:segment scheduling}.
    \item If $\Delta_v(\mathcal{B}^l)=0$ for all nodes on $\mathcal{F}$, let $\delta_0$ be the \textbf{maximal} value that satisfies (i) $\mathcal{B}^{\delta_0}$ is feasible and (ii) $ \mathcal{A}_1\leq \mathcal{B}^{\delta_0}_1$.
    There are two cases:
    \begin{enumerate}
        \item If $\mathcal{A}_1 \geqslant \mathcal{B}^{e}_1$, then we have $\mathcal{A}_1 = \mathcal{B}^{\delta_0}_1$. Since $\Delta_v(\mathcal{B}^l)=0$ for all nodes, we derive $\mathcal{B}^{\delta_0}_m \leq \mathcal{A}_m$.
        \item If $\mathcal{A}_1 < \mathcal{B}^{e}_1$, then we have $\mathcal{A}_1 < \mathcal{B}^{\delta_0}_1$ and $\mathcal{B}^{\delta_0} = \mathcal{B}^{e}$. By definition of $\mathcal{B}^{e}$, there must exist a node $u \in \mathcal{F}$ such that $\mathcal{B}^e_{u} = e_{u}$. Therefore, we derive $\mathcal{B}^{\delta_0}_m \leq \mathcal{A}_m$ as we have $\Delta_u(\mathcal{B}^l)=0$ for all nodes.
    \end{enumerate}
    Summing up these cases, we can always find $\delta_0$ such that $\mathcal{A}_1\leq \mathcal{B}^{\delta_0}_1,  \mathcal{B}^{\delta_0}_m \leq \mathcal{A}_m$.
\end{enumerate}
\end{proof}

Then, we design our labeling algorithm on the newly constructed graph. 

\subsubsection{Labeling Algorithm.} \label{labeling algorithm}
We design a labeling algorithm on the new sparser graph $G_{sp}$, where excess-user-ride-time optimality is ensured on each arc. 
The proposed labeling algorithm extends the label at the end of the partial path $\mathcal{P}$. We denote $L_i$ as the label associated with a partial path ends with node $i$.
The forward labeling algorithm extends labels from a source node $o_k \in O$ to a non-predefined sink node $f \in F$. Let a label associated with a partial path $\mathcal{P}$ from $o_k$ to current vertex $i$ be $L_i=\{R_i^{cost},(R_i^{rch_s})_{s \in S},R_i^{tMin},R_i^{tMax},R_i^{rtMax},R_i^{req}\}$, the definition of each resource is described as follows:
\begin{enumerate}
\item $R_i^{cost}$: The reduced cost of the partial route until $i$;
\item $R_i^{rch_s}$: The number of times recharging station $s \in S$ is visited along partial path $\mathcal{P}$;
\item $R_i^{tMin}$: The earliest service start time at vertex $i$ that considers a minimum recharging time (ensuring the battery feasibility up to vertex $i$) at the recharging station if a recharging station is visited along the partial path before reaching $i$;
\item $R_i^{tMax}$: The earliest service start time at vertex $i$ that considers a maximum recharge time (ensuring the time-window feasibility up to vertex $i$) at the recharging station if a recharging station is visited along the partial path before reaching $i$;
\item $R_i^{rtMax}$: 
The maximum recharging time required to fully recharge at vertex $i$. In the case that a recharging station is visited prior to $i$ along $\mathcal{P}$, the vehicle performs a minimum recharge that ensures the battery feasibility up to vertex $i$;
\item $R_i^{req}$: The set of unreachable requests until $i$ along partial path $\mathcal{P}$. A request is said to be ``unreachable" if time window constraints are violated or this request has been visited.
\end{enumerate}

In case no recharging station is visited on partial path $\mathcal{P}$, the value of $R_i^{tMin}$ is equal to $R_i^{tMax}$, indicating the earliest service start time at vertex $i$. $R_i^{rtMax}$ represents the accumulated amount of needed recharging time until $i$. In the initial label at vertex $o_k$, $R_{o_k}^{req}$ is an empty set, $R_{o_k}^{tMin}$ and $R_{o_k}^{tMax}$ are equal to $e_{o_k}$, while all other components are set to zero. 
We extend a label $L_i = \{R_i^{cost},(R_j^{rch_s})_{s \in S},R_i^{tMin},R_i^{tMax},R_i^{rtMax},R_i^{req}\}$ along arc $(i,j)\in A'$ using the following REFs:

\begin{equation}
    R_j^{cost}=R_i^{cost}+ \bar{c}_{i,j}
\end{equation}


\begin{equation}
R_j^{rch_s}= R_i^{rch_s} +
\begin{cases}
    1,\quad & \text{if $j = s$} \\
    0, \quad & \text{otherwise}
\end{cases}
\end{equation}

\begin{equation}
R_j^{tMin}=
\begin{cases}
   \max \{\mathcal{B}_j^e,R_i^{tMin}+t_{i,j}'\} , \quad &\text{if $R_i^{rch}= \emptyset$}  \\
   \max \{\mathcal{B}_j^e,R_i^{tMin}+t_{i,j}'\}+Z_{i,j}, \quad &\text{otherwise} 
\end{cases}
\end{equation}

\begin{equation}
R_j^{tMax}=
\begin{cases}
  \min \{\mathcal{B}_j^l,\max\{\mathcal{B}_j^e,R_i^{tMin}+R_i^{rtMax}+t_{i,j}'\}\}, \quad & \text{if $i \in S$} \\
   \min \{\mathcal{B}_j^l,\max\{\mathcal{B}_j^e,R_i^{tMax}+t_{i,j}'\}\}, \quad & \text{otherwise} 
\end{cases}
\end{equation}

\begin{equation}
R_j^{rtMax}=
\begin{cases}
  R_i^{rtMax}+h_{i,j}', \quad & \text{if $R_i^{rch} = \emptyset$} \\
  \min\{H,\max\{0,R_i^{rtMax}-S_{i,j}\}+h_{i,j}'\}, \quad & \text{otherwise}
\end{cases}
\end{equation}

\begin{equation}
   R_j^{req} = R_i^{req} \cup U_n(R_j^{tMin})
\end{equation}

where in these functions:

\begin{equation}
S_{i,j}(R_i^{tMin},R_i^{tMax},R_i^{rtMax})=
\begin{cases}
  \max\{0, \min\{\mathcal{B}_j^e-R_i^{tMin}-t_{i,j}',R_i^{rtMax}\}\},\quad & \text{if $i\in S$ }\\
  \max\{0, \min\{\mathcal{B}_j^e-R_i^{tMin}-t_{i,j}',R_i^{tMax}-R_i^{tMin}\}\},\quad & \text{otherwise} 
\end{cases}
\end{equation}

\begin{equation}
    Z_{i,j}(R_i^{tMin},R_i^{tMax},R_i^{rtMax}) = \max\{0, \max\{0, R_i^{rtMax}-S_{i,j}(R_i^{tMin},R_i^{tMax},R_i^{rtMax})\}+h_{i,j}'-H\}
\end{equation}

The $S_{i,j}$ is the slack time between the earliest vehicle-waiting-time optimal service start time $\mathcal{B}_j^e$ at $j$ and the earliest arrival time to $j$. If $i$ is a recharging station, $S_{i,j}$ may be equal to the maximum possible recharging time  $R_i^{rtMax}$ at vertex $i$, while at other nodes, it may be equal to $R_i^{tMax}-R_i^{tMin}$. $Z_{i,j}$ is the minimum recharging time accounting for the available slack that the previous recharging station must perform to maintain battery feasibility. $U_n(R_j^{tMin})$ is the function to determine the unreachable nodes from $j$. 

An extension feasibility check is performed while extending label $L_i$ to label $L_j$ via arc $(i,j) \in A'$. The feasibility check rules are presented in the following proposition:

\begin{proposition}
The extension of label $L_i$ to label $L_j$ is feasible if and only of label $L_j$ satisfies:

\begin{equation} \label{tw at j}
    R_j^{tMin} \leqslant \mathcal{B}_j^l,\quad R_j^{tMin} \leqslant R_j^{tMax},\quad  R_j^{req_p} \leqslant 1, \forall p \in P,\quad R_j^{rtMax} \leqslant
    \begin{cases}
      (1-\gamma)H, \quad & j \in F\\
      H, \quad & \text{otherwise}\nonumber
    \end{cases}
\end{equation}



where $R_j^{req_p}$ is the number of times request $p$ is visited along the partial path.
\end{proposition}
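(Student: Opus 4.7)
The plan is to establish both directions of the equivalence by matching each listed inequality to exactly one underlying E-ADARP constraint that could be violated during the extension, noting that fragment-internal constraints (capacity, pairing/precedence, maximum user ride time) have already been enforced by the enumeration step and need not be re-checked on the arc $(i,j)$.

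For the necessity direction, I would proceed resource-by-resource. The inequality $R_j^{tMin} \leqslant \mathcal{B}_j^l$ expresses that even under the minimum recharging time required for battery feasibility at any preceding recharging station, the earliest service start at $j$ cannot exceed the restricted upper time-window $\mathcal{B}_j^l$ (which by Theorem \ref{thm:segment scheduling} is the latest admissible start preserving excess-user-ride-time optimality inside the arc). The condition $R_j^{tMin} \leqslant R_j^{tMax}$ captures the consistency of the recharging decision: $R_j^{tMin}$ is the earliest arrival when the recharge is the minimum that guarantees battery feasibility up to $j$, while $R_j^{tMax}$ is the earliest arrival compatible with the maximum recharge permitted by time-window considerations; a feasible recharging duration exists if and only if this interval is non-empty. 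The inequality $R_j^{req_p} \leqslant 1$ is simply elementarity. Finally, $R_j^{rtMax}$ is the recharging-time deficit between full battery and the realized state of charge at $j$ under the minimum-recharge policy: at any intermediate node it must not exceed $H$ (otherwise the state of charge would have dropped below zero on $(i,j)$), and at a destination depot the minimum-battery-level constraint $\gamma Q$ translates directly into $R_j^{rtMax} \leqslant (1-\gamma)H$.

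For sufficiency I would explicitly construct a feasible extended schedule from the four inequalities. Pick any service start time $T_j \in [R_j^{tMin}, R_j^{tMax}]$; this interval is non-empty by the second inequality, and the first inequality together with the $\max\{\mathcal{B}_j^e, \cdot\}$ clause in the REF for $R_j^{tMin}$ guarantees $T_j \in [\mathcal{B}_j^e, \mathcal{B}_j^l]$, so the restricted time-window at $j$ is respected. If a recharging station lies on $\mathcal{P}$ before $i$, the terms $S_{i,j}$ and $Z_{i,j}$ in the REFs are designed precisely so that any such $T_j$ corresponds to a realizable recharging duration delivering a battery state of charge consistent with $R_j^{rtMax}$; the battery inequality then gives the remaining capacity and destination-depot requirements, and elementarity is immediate from $R_j^{req_p} \leqslant 1$. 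Chaining this construction with the already-feasible portion of $\mathcal{P}$ up to $i$ yields a feasible extension.

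The main obstacle will be handling the coupling between the recharging duration at a preceding station and the service start time at $j$: shortening the recharge allows an earlier $T_j$ but degrades the state of charge, whereas lengthening it does the opposite. The resources $R_j^{tMin}, R_j^{tMax}, R_j^{rtMax}$ jointly encode the extremal endpoints of this trade-off, so the technical core of the proof lies in carefully unpacking the definitions of $S_{i,j}$ and $Z_{i,j}$ to show that the non-emptiness condition $R_j^{tMin} \leqslant R_j^{tMax}$ is simultaneously the right algebraic certificate for both the existence of a valid recharging time and, combined with the battery inequality, for a state of charge that stays within $[0, Q]$ (respectively at least $\gamma Q$ if $j \in F$) everywhere along $(i,j)$.
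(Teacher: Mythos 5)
The paper states this proposition without proof, so there is no in-text argument to compare yours against; what follows assesses your plan on its own terms. Your decomposition is the natural one and the easy correspondences are right: $R_j^{req_p}\leqslant 1$ is elementarity, $R_j^{tMin}\leqslant \mathcal{B}_j^l$ is the restricted time window at $j$ under the minimum-recharge policy, $R_j^{tMin}\leqslant R_j^{tMax}$ is non-emptiness of the set of admissible service start times once the forced extra recharge $Z_{i,j}$ is accounted for, and you are right that fragment-internal capacity, pairing and ride-time constraints need no re-checking since they are enforced at enumeration time. You also correctly locate the hard part in the coupling between the recharge duration at a preceding station and the service start time at $j$. But that hard part is exactly what you defer, so as written this is a proof plan rather than a proof: sufficiency needs an induction along the partial path showing that every $T_j\in[R_j^{tMin},R_j^{tMax}]$ is realized by an actual recharge amount that keeps the state of charge in $[0,Q]$ everywhere and respects all intermediate restricted time windows, and that induction is where the definitions of $S_{i,j}$ and $Z_{i,j}$ must actually be unpacked.

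The one place where your necessity argument, as stated, would not go through cleanly is the destination-depot condition $R_j^{rtMax}\leqslant(1-\gamma)H$. By its definition, $R_j^{rtMax}$ is computed under a \emph{minimum}-recharge policy (plus only the ``free'' extra recharge credited through $S_{i,j}$), i.e., it corresponds to the lowest state of charge the label can certify at $j$. Feasibility of the route only requires that \emph{some} admissible recharge amount yields a final charge of at least $\gamma Q$; a route could in principle need a larger-than-minimum recharge, delaying arrival at $j$ within $[\mathcal{B}_j^e,\mathcal{B}_j^l]$, to meet the depot requirement, in which case the stated inequality could fail even though the extension is feasible. To establish the ``only if'' direction you must either show that all such useful extra recharge is already absorbed into $R_j^{rtMax}$ via the $S_{i,j}$ term --- which is not obviously true of the residual slack $R_j^{tMax}-R_j^{tMin}$ that the dominance rules track separately --- or explain why correctness of the algorithm only needs the ``if'' direction at the sink. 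Saying the depot constraint ``translates directly'' into the inequality skips precisely this point.
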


If $j$ is a recharging station, then constraint (\ref{visits to rc}) must be considered:


\begin{equation} \label{visits to rc}
    R_j^{rch_j} \leqslant N_s^{max}
\end{equation}

In case of a feasibility violation, the corresponding label will be discarded. Also, it should be mentioned that each time when a fragment is added, the visited customers on this fragment need to be checked. If the fragment contains a visited customer, it should be discarded. The maximum user ride time constraints and capacity constraints are checked when generating fragments.

\begin{definition}
    Suppose that $L^k = \{R_k^{cost},(R_k^{rch_s})_{s \in S},R_k^{tMin},R_k^{tMax},R_k^{rtMax},R_k^{req}\}, k \in {1,2}$ are two labels and the partial path associated to $L^1$ and $L^2$ are $\mathcal{P}_1$ and $\mathcal{P}_2$, respectively. Assuming that $\mathcal{P}_1$, $\mathcal{P}_2$ end at the same node, $L^1$ dominates $L^2$ if and only if:
    \begin{equation}
    R_1^{r} \leqslant R_2^{r}, \forall r \in \{cost, rch, tMin\}
    \end{equation}

    \begin{equation}
    R_1^{req} \subseteq R_2^{req}
    \end{equation}
    
    \begin{equation}
   R_1^{rtMax}-(R_1^{tMax}-R_1^{tMin}) \leqslant R_2^{rtMax} - (R_2^{tMax}-R_2^{tMin}) 
    \end{equation}

    \begin{equation}
    R_1^{rtMax}-(R_2^{tMax}-R_1^{tMin}) \leqslant R_2^{rtMax} 
    \end{equation}
    
\end{definition}

The last two conditions are equivalent to the requirement that: for every service start time $T_2 \in [R_2^{tMin},R_2^{tMax}]$, there exists a service start time $T_1 \in [R_1^{tMin},T_2]$ such that $R_1^{rtMax}-(T_1-R_1^{tMin}) \leqslant R_2^{rtMax}-(T_2-R_2^{tMin})$. In other words, we can always find a service start time $T_1\leqslant T_2$ that does not consume more energy.




\subsection{Cutting Planes} \label{cuts}
To strengthen the continuous MP formulation, we apply two types of cutting planes for instances that are not solved optimally by CG. The first type of cutting plane is the two-path cut, which was initially proposed by \cite{kohl19992} for solving VRPTW and is defined as follows. For a subset $W \subseteq N \cup S$, we define the sets of predecessors of $W$ as $\pi(W) = \{i \in P: i+n \in W, i \notin W\}$, the sets of successors of $W$ as $\sigma(W) = \{i + n\in D: i \in W, i + n \notin W\}$, and the flow enter subset $W \subseteq N \cup S$ is: 
\begin{equation}
    x(W) = \sum \limits_{i \notin W} \sum \limits_{j \in W}x_{i,j}
\end{equation}
where $x_{i,j} = \sum \limits_{k \in K}x_{i,j}^k$ is calculated with the current solution of continuous MP.

We aim at finding the subset $W$ such that $x(W) < 2$ and $k(W) > 1$, where $k(W)$ is the smallest number of vehicles needed to serve all nodes in $W$. The corresponding two-path inequality for such a subset $W$ is: 
\begin{equation} \label{2-path cuts}
   \sum \limits_{\omega \in \Omega} n_{\omega}^W y_{\omega} \geqslant 2 
\end{equation}
where $n_{\omega}^W$ is the number of times column $\omega$ enters $W$, i.e., the number of arcs $(i,j) \in \omega$ such that $i \notin W$ and $j \in W$.  

To separate two-path inequalities, we adapt the greedy heuristic proposed in \cite{kohl19992}. After identifying set $W$ satisfying $x(W) < 2$, we determine whether there exists an elementary path that serves all nodes in $\pi(W) \cup W \cup \delta(W)$ in a feasible way. If no such path can be found, then the subset $W$ defines a valid two-path inequality (\ref{2-path cuts}), which is added to the continuous MP formulation. The dual variable associated with inequality (\ref{2-path cuts}) must be subtracted from $\Bar{c}_{\omega}$ for all arcs $(i,j)$ with $i \notin W$ and $j \in W$.

The second type of cutting planes are subset row inequalities which were first introduced by \cite{jepsen2008subset} to solve the VRPTW. For a subset $W \subseteq N \cup S$ of three elements, the corresponding subset row inequality is:
\begin{equation}
    \sum \limits_{\omega \in \Omega'} m_{\omega}^W y_{\omega} \leqslant 1
\end{equation}
where $m_{\omega}^W = \lfloor{\beta_{\omega}^W}/2 \rfloor$ and $\beta_{\omega}^W$ is the number of visits to a customer in $W$ along route $\omega$. For an elementary route $\omega$, we have $m_{\omega}^W = 1$ if $\omega$ visits two or three customers in $W$ and $m_{\omega}^W = 0$ otherwise. 

Subset row inequalities are separated if we find no two-path cut from the current solution of continuous MP. We separate subset row inequalities by enumerating all subsets of three customers and checking for each subset if the corresponding inequality is violated. The valid inequalities are added to the continuous MP formulation. Note that the dual variables associated with the cuts cannot be integrated into the reduced cost of the labels. Instead, one additional resource attribute is created to record the number of visits to each subset of customers. Also, the dominance rules are modified, as in \cite{jepsen2008subset,desaulniers2011cutting}. Each time, we identify such cuts and select at most $n_{max}^{SRC}$ cuts according to the rules proposed in \cite{desaulniers2008tabu}. In our case, $n_{max}^{SRC} = 10$. 

\subsection{Branching Strategies} \label{branches}
In case the CG algorithm does not obtain integer solutions, we impose branching strategies on fractional solutions to derive integer solutions. We consider two branching strategies in the branch-and-bound search tree: the first branching strategy branches on the total number of routes (as in \cite{desrochers1992new}), and the second branching strategy branches on the total flow of an arc (as in \cite{desaulniers2016exact}). For a fractional solution, we evaluate the branching strategies with the mentioned order and select the first type that can be imposed. When imposing the selected branching strategy, the first type of branching strategy is imposed by adding the respective inequality to the continuous MP. As for the second type of branching strategy, we remove columns that contain incompatible arcs from the column pool and prevent the generation of columns that include these arcs in the labeling algorithm. If we obtain fractional arc flow for several arcs, the arc with a fractional flow that is closest to 0.5 is selected. Each time when a branching strategy is imposed, we create two branches and explore the branch-and-bound tree in a depth-first fashion. 

\section{Computational Experiments} \label{experiments}
In this section, we first describe the benchmark instance sets used to examine the performance of the proposed CG and B\&P algorithms. In order to validate our algorithms, we take the identical problem settings ($N_s^{max} = 1$) as in the literature (i.e., \cite{bongiovanni2019electric, su2023deterministic}).
With this setting, we present our B\&P algorithm results under different minimum battery level restrictions (i.e., $\gamma = 0.1, 0.4, 0.7$). In Section \ref{sec::CG results}, we compare the root node results (i.e., Lagrangian dual bounds and CG primal bounds solving integer RMP on the previously generated columns) to the best-reported B\&C results of \cite{bongiovanni2019electric}. We also test our algorithm on larger-scale instances with up to 8 vehicles and 96 requests, which are first introduced in \cite{su2023deterministic}. We compare the root node results with their best heuristic results. In Section \ref{sec:: BP experimental results}, we report the B\&P algorithm results on all existing instances. To facilitate reading and comparison, Tables \ref{Cordeau instances}, \ref{Uber instances}, and \ref{Ropke instances} consolidate all our algorithm results (including results from CG, CG with cutting planes, and B\&P algorithms), as well as best-known literature results. 
Following the validation of our algorithm, we delve into some intriguing perspectives in Section \ref{interesting perspectives}, where we address several noteworthy aspects: (1) the impacts of considering total excess user ride time in the objectives and its practical advantages, (2) the impacts of having a good initialization of the column pool, (3) the impacts of allowing unlimited visits to recharging stations ($N_s^{max} = \infty$).

All algorithms are coded in Julia 1.7.2 and are run on a standard PC with an Intel(R) Core(TM) i7-8700 CPU at 3.20 GHz and with 32 Gb of RAM using a single thread only. Other packages used in the program are JuMP 1.0.0 and Gurobi, 0.11.1. It should be noted that the results reported in \cite{bongiovanni2019electric} are obtained from a computer with an Intel(R) Core(TM) i7-4790 CPU at 3.60 GHz and with 16 Gb of RAM. 

\subsection{Benchmark Instances}
Three benchmark instance sets are used in the computational experiments. Two are existing benchmark instance sets from \cite{bongiovanni2019electric}. 
The third set corresponds to the large-scale E-ADARP instances introduced in \cite{su2023deterministic}. All the instances are labeled in the form xk-n, where $x \in \{a,u,r\}$, $k$ is the number of vehicles and $n$ is the number of requests. The characteristics of the three benchmark instance sets are as follows:

\begin{itemize}
 \item “a” denotes the E-ADARP instances proposed by \cite{bongiovanni2019electric} that are extended from the standard DARP benchmark instance set of \cite{cordeau2006branch} by supplementing features of electric vehicles and recharging stations. 
 These instances are called type-a instances. For these instances, the number of vehicles is in the range $2\leqslant k \leqslant 5$, and
the number of requests is in the range $16\leqslant n \leqslant 50$.
\item “u” denotes the E-ADARP instances generated by \cite{bongiovanni2019electric} that are based on the ride-sharing data from Uber Technologies. These instances are called type-u instances. For these instances, the number of vehicles is in the range $2\leqslant k \leqslant 5$ and the number of requests is in the range $16\leqslant n \leqslant 50$, as in the type-a instances.
\item “r” denotes the larger E-ADARP instances adopted from \cite{su2023deterministic}. These instances are based on the large-scale instances of \cite{ropke2007models} for the standard DARP and are adapted to the E-ADARP following the same logic as the type-a instances. These instances are called type-r instances.
For these instances, the number of vehicles is in the range $5\leqslant k \leqslant 8$ and
the number of requests is in the range $60\leqslant n \leqslant 96$.
\end{itemize}

The supplemented information on both type-a and type-r instances includes recharging station ID, vehicle capacity, battery capacity, the final state of charge requirement, recharging rates, and discharging rates. For type-a and type-r instances, the vehicle capacity is set to three and the maximum user ride time is 30 minutes. The recharging rate is equal to the discharging rate, and is set to 0.055KWh per minute, according to the design parameter of EAVs given in \url{https://www.hevs.ch/media/document/1/fiche-technique-navettes-autonomes.pdf}. The amount of battery consumption for a sequence of nodes is proportional to the travel time on the sequence. With full effective battery capacity (it is set to 14.85 KWh), approximately 20 nodes can be visited without recharging.

The ride-sharing data of Uber Technologies can be obtained via \url{https://github.com/dima42/uber-gps-analysis/tree/master/gpsdata}. To create type-u instances, the origin/destination locations are extracted from the GPS logs of the city of San Francisco (CA, USA). To calculate the travel time matrix, Dijkstra's shortest path algorithm is applied, assuming a constant speed of 35 km/h. The recharging station locations are obtained through the Alternative Fueling Station Locator from Alternative Fuels Data Center (AFDC). The datasets are available at: \url{https://luts.epfl.ch/wpcontent/uploads/2019/03/e_ADARP_archive.zip}

Following \cite{bongiovanni2019electric}, we set $w_1 = 0.75$, $w_2 = 0.25$. We consider three different $\gamma$ values, namely, $\gamma \in \{0.1,0.4,0.7\}$, representing the low-,medium-, and high-battery-level restriction case, respectively.
Higher values of $\gamma$ result in more tightly constrained instances of the E-ADARP, allowing us to analyze the algorithm's performance from the loosely- to the highly-constrained case.

\subsection{Computational Results} \label{all results}
We first solve the continuous RMP with the proposed CG algorithm. To initialize the column pool, we iterate the DA algorithm 500 times and store all the generated columns. To obtain integer solutions, all the feasible columns generated are used to compute the integer RMP at the end of the CG algorithm. In Tables \ref{Cordeau instances}, \ref{Uber instances}, and \ref{Ropke instances}, we provide an overview of our CG results for type-a, -u, and -r instances, organized into four columns labeled ``$Obj$", ``$LB$", ``$LB\%$", and ``$CPU$". These columns correspond to the upper bound derived from solving the integer RMP, the lower bound obtained from solving continuous RMP, the percentage gaps between best integer solution and lower bound, and the computation time in seconds. To close the integrality gaps in the case it exists, we tried the following ideas: (1) separating two-path cuts and subset row inequalities and adding them to the continuous MP formulation so as to strengthen lower bounds, and (2) embedding the CG approach into a branch-and-bound algorithm, resulting in a B\&P algorithm. 
The respective results are reported in Tables \ref{Cordeau instances}, \ref{Uber instances}, and \ref{Ropke instances}.
It should be noted that one may consider combining idea (1) with (2), resulting in a branch-and-price-and-cut (B\&P\&C) scheme to solve the E-ADARP. Our computational results (Section \ref{sec:: BP experimental results}) indicate that branching seems to be more computationally attractive than adding cuts. Hence, our work does not consider implementing the B\&P\&C algorithm.

In all experiments, the maximum time limit for executing the considered algorithm (e.g., CG algorithm) is set to two hours. For type-r instances, we extend the maximum time limit to 5 hours. The benchmark results from \cite{bongiovanni2019electric} are given in the last three columns in Table \ref{Cordeau instances}, Table \ref{Uber instances}. The meanings of the abbreviations are summarized as follows:
\begin{itemize}
    \item $Obj$ and $Obj_1$: The objective values of integer RMP solutions obtained by the CG algorithm without and with cutting planes being added, respectively;
    \item $LB$ and $LB_1$: Lagrangian dual bound (hereafter LB) values obtained by solving the continuous MP without and with cutting planes being added, respectively;
    \item $LB\%$ and $LB_1\%$: the gaps of the $LB$ and $LB_1$ to the best-obtained integer solution values among all considered algorithms (denoted as $Best$), respectively;
    \item $CPU$ and $CPU_1$: the CPU time in seconds of solving the continuous MP without and with cutting planes being added, including the time of fragment enumeration, all preprocessing works, and column pool initialization;
    \item $Obj'$: the optimal objective values from B\&P;
    \item $LB'$: the lower bounds obtained from B\&P;
    \item $LB'\%$: the gaps of $LB'$ to $Best$;
    \item $N_{node}$: the number of nodes explored in the B\&P tree;
    \item $CPU'$: the CPU time in seconds of applying B\&P algorithm;
    \item $Obj_2$: the best solution found by \cite{bongiovanni2019electric};
    \item $LB_2$: the best lower bounds reported in \cite{bongiovanni2019electric};
    \item \textit{$LB_2\%$}: the gaps between the $LB_2$ and $Best$;
    \item \textit{BKS}: the best known solutions of the DA algorithm of \cite{su2023deterministic};
    \item $\overline{CPU}$: average CPU time (in seconds) per instance;
    \item $\overline{LB}\%$ and $\overline{LB_1}\%$: average value of $LB\%$ and $LB_1\%$ per instance, respectively;
    \item $\overline{LB_2}\%$: average $LB_2\%$ per instance;
    \item NA (Not Available): the considered algorithm does not obtain a feasible integer solution or solve the continuous MP within the given time limit for the respective instance;
    \item $\#opt$: number of optimal solutions obtained by the respective algorithm;
    \item $\#bestlb$: number of times the respective algorithm provides the best lower bound of all considered algorithms;
    \item $\#bestub$: number of times the respective algorithm provides the best integer solution of all considered algorithms;
\end{itemize}

And:
$${LB\% = \dfrac {Best-LB} {Best}}\times 100\%$$ 

$LB_1\%$ and $LB_2\%$ are calculated similarly.


In Tables \ref{Cordeau instances}, \ref{Uber instances}, and \ref{Ropke instances}, we mark the obtained LB from the CG algorithm in italics if the CG algorithm does not converge within the time limit. If the CG algorithm converges, then the obtained LB values are equal to the optimal solution values to continuous RMP. If the corresponding solution is an integer solution, then we obtain the optimal solution of MP, and we indicate it by an asterisk in column ``$Obj$". Otherwise, we try to close the integrality gap by (1) enhancing the continuous MP formulation with cuts or (2) branching directly. 
For (1), we perform the CG algorithm to solve the continuous MP, considering additional dual variables introduced by cuts. If the remaining integrality gap is closed, we mark the obtained optimal solution with an asterisk in column ``$Obj_1$". For (2), we perform the CG algorithm within the branch-and-bound tree, considering the branching strategies of Section \ref{branches}. If the B\&P algorithm terminates within the time limit, the obtained solutions are optimal and are marked with an asterisk in column ``$Obj'$".
Equal/improved integer solutions and LB values compared to those reported in \cite{bongiovanni2019electric} are marked in bold.


In the scenario of $\gamma = 0.4$ and $\gamma = 0.7$, we find strictly better integer solutions than the reportedly optimal solution in \cite{bongiovanni2019electric}. After analyzing their model and parameter settings, we find that in the model of \cite{bongiovanni2019electric}, the employed ``big M" values were not correctly computed. Readers can refer to \ref{supplementary document} for a more in-depth analysis and how the ``big M" values should be set correctly. These problematic results are marked in italics in column ``$Obj_2$", and we add two asterisks to our values in columns ``$Obj$", ``$Obj_1$", and ``$Obj'$" for the concerned instances. These associated instances are a2-24-0.4, a3-30-0.4, a3-36-0.4, a2-24-0.7, a3-24-0.7, a4-24-0.7, u2-24-0.1, u2-24-0.4, u4-40-0.4, u3-30-0.7. The corresponding $LB_2\%$ values are therefore negative.

\subsubsection{CG results on type-a, -u, and -r instances under different minimum battery level restrictions.} \label{sec::CG results}
In this part, we conduct experiments on type-a, -u, and -r instances considering different minimum battery level restrictions $\gamma = 0.1, 0.4, 0.7$. With rising values of $\gamma$, vehicles must have a higher minimum battery level when returning to the depot. Recall that we set $N_s^{max} = 1$ for being comparable to existing benchmark results in these experiments. We set the time limit for performing our algorithms on type-a and -u instances to 2 hours (the same setting as in \cite{bongiovanni2019electric}), while a maximum of 5 hours' execution time is set for each type-r instance.

From our results on type-a and -u instances, our proposed CG algorithm obtains optimal solutions for 50 out of 84 instances before adding cuts to further enhance lower bounds. In addition, we obtain 48 equal integer solutions and 23 new best solutions on previously solved and unsolved instances. As for the quality of obtained lower bounds, we obtain 41 equal lower bounds while enhancing 23 lower bounds (with an improvement of up to 11.88\%). On average, we reduce the overall $LB\%$ by 1.24\%, compared to the best-reported B\&C results in \cite{bongiovanni2019electric}.
Then, the remaining integrality gaps are closed to a large extent by adding cuts to the continuous MP formulation. The proposed CG algorithm solves 16 more instances optimally and further improves 14 lower bounds and 4 integer solutions. In total 66 out of 84 instances are solved optimally at the root node, and we improve the reported lower bounds of \cite{bongiovanni2019electric} by 1.35\% on average.



From our results on large-scale type-r instances (Table \ref{Ropke instances}), we obtain 15 better solutions than the heuristic solutions reported in \cite{su2023deterministic}, and 10 of these solutions are proven optimal. We report the computed lower bound values in the third column, which are the first lower bound values for the type-r instances. For setting $\gamma = 0.7$, neither the CG algorithm nor the DA of \cite{su2023deterministic} is able to provide feasible solutions. We improve a majority of the integer solutions, while four DA heuristic solutions are still better than CG-obtained solutions. 

\begin{table}[!hp]
\renewcommand\arraystretch{0.8}
 \setlength\tabcolsep{0.5pt}
    \caption{Overview of E-ADARP solution approaches on type-a instances under $\gamma=0.1, 0.4, 0.7$}
    \label{Cordeau instances}
    \vspace{-4mm}
    \begin{center}
    \footnotesize
    \begin{tabular}{c| c c c c| c c c c|c c c c c|c c c c}
    \hline
        \textbf{$\gamma = 0.1$}&\multicolumn{4}{c}{CG results} &\multicolumn{4}{c}{CG with cutting planes} &\multicolumn{5}{c}{B\&P results} & \multicolumn{4}{c}{B\&C (Bongiovanni et al., 2019)$^a$}\\
        \hline
        Instance & $Obj$ & $LB$ & $LB\%$ &CPU(s) &$Obj_1$ &$LB_1$ &$LB_1\%$ &CPU(s) & $Obj'$ & $LB'$ & $LB'\%$  &$N_{node}$ & CPU$'$ 
        &$Obj_2$ & $LB_2$ & $LB_2\%$ &CPU(s) \\
        \hline
        a2-16 &\textbf{237.38$^*$}  &\textbf{237.38} &0 &11.1 
        &\textbf{237.38$^*$} &\textbf{237.38} &0 &11.1 
        &\textbf{237.38$^*$} &\textbf{237.38} &0 &1 &11.1 
        & \textbf{237.38$^*$} & \textbf{237.38} &0   & 1.2 \\
        a2-20 &\textbf{279.08$^*$}  & \textbf{279.08} &0 &70.9
        &\textbf{279.08$^*$} &\textbf{279.08} &0 &70.9
        &\textbf{279.08$^*$} &\textbf{279.08} &0 & 1&70.9
        &\textbf{279.08$^*$} & \textbf{279.08} & 0&4.2 \\
        a2-24  &\textbf{346.21$^*$} & \textbf{346.21} &0 & 243.3  
        &\textbf{346.21$^*$} &\textbf{346.21} &0 & 243.3
        &\textbf{346.21$^*$} &\textbf{346.21} &0 &1  & 243.3
        &\textbf{346.21$^*$} & \textbf{346.21} &0 &9.0   \\
        a3-18 &\textbf{236.82$^*$}  & \textbf{236.82} &0 & 5.0  
        &\textbf{236.82$^*$} &\textbf{236.82} &0 &5.0
        &\textbf{236.82$^*$} &\textbf{236.82} &0  &1 &5.0 
        &\textbf{236.82$^*$} & \textbf{236.82}  &0 &4.8 \\
        a3-24 &\textbf{274.80$^*$}  & \textbf{274.80} &0 & 81.2  
        &\textbf{274.80$^*$} &\textbf{274.80} &0 &81.2
        &\textbf{274.80$^*$} &\textbf{274.80} &0  &1 &81.2
        &\textbf{274.80$^*$} & \textbf{274.80} &0  &13.8  \\
        a3-30 &\textbf{413.27$^*$}  & \textbf{413.27} &0 & 221.7  
        &\textbf{413.27$^*$} &\textbf{413.27} &0 &221.7
        &\textbf{413.27$^*$} &\textbf{413.27} &0 & 1 &221.7
        & \textbf{413.27$^*$} &\textbf{413.27}  &0 &102.0  \\
        a3-36 &\textbf{481.17$^*$}  &\textbf{481.17} &0  &730.8   
        &\textbf{481.17$^*$} &\textbf{481.17} &0 &730.8
        &\textbf{481.17$^*$} &\textbf{481.17} &0 &1 &730.8 
        & \textbf{481.17$^*$} &\textbf{481.17}  &0 &106.8 \\
        a4-16 &\textbf{222.49}  & 222.38 &0.05 & 7.5  
        &\textbf{222.49$^*$} &\textbf{222.49}  &0 & 11.4
        &\textbf{222.49$^*$} &\textbf{222.49} &0 &5 &5.5
        &\textbf{222.49$^*$}  &\textbf{222.49}  &0 & 3.6 \\
        a4-24 &\textbf{310.84$^*$}  & \textbf{310.84} &0 & 25.0  
        &\textbf{310.84$^*$} &\textbf{310.84} &0 & 25.0
        &\textbf{310.84$^*$} &\textbf{310.84} &0 &1 & 25.0
        &\textbf{310.84$^*$} &\textbf{310.84} &0  &31.2 \\
        a4-32 &\textbf{393.96$^*$}  &\textbf{393.96} &0 &143.2  
        &\textbf{393.96$^*$} &\textbf{393.96} &0 & 143.2
        &\textbf{393.96$^*$} &\textbf{393.96} &0 & 1 & 143.2
        &\textbf{393.96$^*$} &\textbf{393.96} &0 &612.0\\
        a4-40 &\textbf{453.84$^*$}  &\textbf{453.84} &0  &653.0   
        &\textbf{453.84$^*$} &\textbf{453.84} &0 &653.0
        &\textbf{453.84$^*$} &\textbf{453.84} &0 &1 &653.0
        &\textbf{453.84$^*$} & \textbf{453.84} &0 &517.2 \\
        a4-48 &\textbf{554.54$^*$}  &\textbf{554.54}  &0 &1334.4   
        &\textbf{554.54$^*$} &\textbf{554.54} &0 &1334.4
        &\textbf{554.54$^*$} &\textbf{554.54} &0 &1 &1334.4
        &\textbf{554.54}  &526.96 &5.04 & 7200.0\\
        a5-40 &416.79  &413.48 &0.25 &318.6   
        &\textbf{414.51$^*$} &\textbf{414.51} &0 & 805.2
        &\textbf{414.51$^*$} &\textbf{414.51} &0 &21 &448.3
        &\textbf{414.51$^*$} & \textbf{414.51} &0 &1141.8\\
        a5-50 &\textbf{559.17$^*$} &\textbf{559.17}  &0 &496.2  
        &\textbf{559.17$^*$} &\textbf{559.17} &0 &496.2
        &\textbf{559.17$^*$} &\textbf{559.17} &0 &1  &496.2
        &\textbf{559.17} & 531.15  &5.01 &7200.0\\
        \hline
        Avg & & &\textbf{0.02} & 310.1
        & & &\textbf{0} &345.1 
        & & &\textbf{0} &2.7 &319.3
        & & &0.72 &1210.5\\
        \hline
        \textbf{$\gamma = 0.4$} & $Obj$ & $LB$ & $LB\%$ &CPU(s) 
        &$Obj_1$ &$LB_1$ &$LB_1\%$ &CPU(s) 
        & $Obj'$ & $LB'$ & $LB'\%$  &$N_{node}$ & CPU$'$ 
        &$Obj_2$ & $LB_2$ & $LB_2\%$ &CPU(s) \\
        \hline
        a2-16 & \textbf{237.38$^*$}  & \textbf{237.38} &0 & 12.7 
        &\textbf{237.38$^*$} &\textbf{237.38} &0 &12.7
        &\textbf{237.38$^*$} &\textbf{237.38} &0 &1  &12.7
        &\textbf{237.38$^*$} &\textbf{237.38} &0 &1.8 \\
        a2-20 &\textbf{280.70$^*$}   &\textbf{280.70} &0 &93.8  
        &\textbf{280.70$^*$} &\textbf{280.70} &0 &93.8
        &\textbf{280.70$^*$} &\textbf{280.70} &0 &1  &93.8
        &\textbf{280.70$^*$}  &\textbf{280.70} &0 &49.8 \\
        a2-24  &\textbf{347.04$^{**}$}  &\textbf{347.04}  &0 &267.8  &\textbf{347.04$^{**}$} &\textbf{347.04} &0 &267.8
        &\textbf{347.04$^{**}$} &\textbf{347.04} &0 &1 &267.8
        &\textit{348.04$^*$} &\textit{348.04} &-0.29 &25.2 \\
        a3-18 &\textbf{236.82$^*$} & \textbf{236.82} &0 &5.3   
        &\textbf{236.82$^*$} &\textbf{236.82} &0 &5.3
        &\textbf{236.82$^*$} &\textbf{236.82} &0 &1 &5.3
        &\textbf{236.82$^*$} & \textbf{236.82} &0 &4.2 \\
        a3-24 &\textbf{274.80$^*$}  & \textbf{274.80} &0 &69.7  
        &\textbf{274.80$^*$} &\textbf{274.80} &0 &69.7
        &\textbf{274.80$^*$} &\textbf{274.80} &0 &1 &69.7 
        &\textbf{274.80$^*$} &\textbf{274.80}  &0 &16.8 \\
        a3-30 &\textbf{413.34$^{**}$}  &\textbf{413.34} &0  &306.9  &\textbf{413.34$^{**}$} &\textbf{413.34} &0 &306.9
        &\textbf{413.34$^{**}$} &\textbf{413.34} &0 &1 & 306.9
        &\textit{413.37$^*$} &\textit{413.37} &-0.01 &99.0 \\
        a3-36 &483.83$^{**}$  &481.85 &0.41 &1273.5  
        &\textbf{482.75$^{**}$} &\textbf{482.64} &0.02 & 7200.0
        &\textbf{482.75$^{**}$} &\textbf{482.75} &0 &13 &5154.1
        &\textit{484.14$^*$} &\textit{484.14} &-0.06 &306.6 \\
        a4-16&\textbf{222.49} & 222.38 &0.05 & 6.5   
        &\textbf{222.49$^*$} &\textbf{222.49} &0 & 10.1
        &\textbf{222.49$^*$} &\textbf{222.49} &0 &5 &3.3
        &\textbf{222.49$^*$} &\textbf{222.49} &0 &5.4 \\
        a4-24 &\textbf{311.03$^*$}  &\textbf{311.03} &0  &18.4  
        &\textbf{311.03$^*$} &\textbf{311.03}  &0 &18.4 
        &\textbf{311.03$^*$} &\textbf{311.03}  &0 &1  &18.4 
        &\textbf{311.03$^*$} &\textbf{311.03} &0 &39.6 \\
        a4-32 &\textbf{394.26$^*$} &\textbf{394.26} &0 & 199.3  
        &\textbf{394.26$^*$} &\textbf{394.26} &0 &199.3
        &\textbf{394.26$^*$} &\textbf{394.26} &0 &1 &199.3
        &\textbf{394.26$^*$} &\textbf{394.26} &0 &681.6 \\
        a4-40 &\textbf{453.84$^*$} & \textbf{453.84} &0 & 792.0  
        &\textbf{453.84$^*$} &\textbf{453.84} &0 & 792.0
        &\textbf{453.84$^*$} &\textbf{453.84} &0 &1 & 792.0
        &\textbf{453.84$^*$} &\textbf{453.84} &0 &417.6\\
        a4-48  &\textbf{554.60$^*$} &\textbf{554.60} &0 &2292.0  
        &\textbf{554.60$^*$} &\textbf{554.60} &0 & 2292.0
        &\textbf{554.60$^*$} &\textbf{554.60} &0 &1 & 2292.0
        &\textbf{554.60} &529.22 &4.58 &7200.0 \\
        a5-40 &415.25  &413.48  &0.25 &307.8  
        &\textbf{414.51$^*$} &\textbf{414.51} &0 & 481.6
        &\textbf{414.51$^*$} &\textbf{414.51} &0 &3 &323.6
        &\textbf{414.51$^*$} &\textbf{414.51} &0 &1221.0 \\
        a5-50 &\textbf{559.51$^*$}  &\textbf{559.51}  &0 &1957.1  
        &\textbf{559.51$^*$} &\textbf{559.51} &0 &1957.1 
         &\textbf{559.51$^*$} &\textbf{559.51} &0 &1 &1957.1 
        &560.50 &528.91 &5.47 &7200.0 \\
        \hline
        Avg & & &\textbf{0.05} &543.1
        & & &\textbf{0} &979.1
        & & &\textbf{0} &2.3 &821.1
        & & &0.74 &1233.5\\
        \hline
        \textbf{$\gamma = 0.7$} & $Obj$ & $LB$ & $LB\%$ &CPU(s) 
        &$Obj_1$ &$LB_1$ &$LB_1\%$ &CPU(s) 
        &$Obj'$ & $LB'$ & $LB'\%$  &$N_{node}$ & CPU$'$ 
        &$Obj_2$ & $LB_2$ & $LB_2\%$ &CPU(s) \\
        \hline
        a2-16 &\textbf{240.66$^*$} &\textbf{240.66} &0 &20.8 
        &\textbf{240.66$^*$} &\textbf{240.66} &0 &20.8
        &\textbf{240.66$^*$} &\textbf{240.66} &0 &1 &20.8
        &\textbf{240.66$^*$} &\textbf{240.66} &0 &5.4 \\
        a2-20 &\textbf{293.27}  &\textbf{292.73} &0.18 &1108.8 
        &\textbf{293.27$^*$} &\textbf{293.27} &0 & 1733.8
        &\textbf{293.27$^*$} &\textbf{293.27} &0  &5 &1758.6
        &NA &287.17 &2.08 &7200.0 \\
        a2-24 &\textbf{353.18$^{**}$}  &\textbf{353.18} &0 &1241.9 &\textbf{353.18$^{**}$} &\textbf{353.18} &0 &1241.9 
        &\textbf{353.18$^{**}$} &\textbf{353.18} &0 &1 &1241.9
        &\textit{358.21$^*$} &\textit{358.21}  &-1.42 &961.2 \\
        a3-18 &\textbf{240.58$^*$} &\textbf{240.58} &0 &14.1  
        &\textbf{240.58$^*$} &\textbf{240.58} &0 & 14.1
        &\textbf{240.58$^*$} &\textbf{240.58} &0 &1 & 14.1
        &\textbf{240.58$^*$} &\textbf{240.58}  &0 &48.0 \\
        a3-24  &\textbf{275.97$^{**}$}  &\textbf{275.97} &0 &100.8 &\textbf{275.97$^{**}$} &\textbf{275.97} &0 & 100.8
        &\textbf{275.97$^{**}$} &\textbf{275.97} &0 &1 & 100.8
        &\textit{277.72$^*$} &\textit{277.72} &-0.63  &152.4 \\
        a3-30 &\textbf{426.40} &\textbf{422.57} &0.56 &1171.6 
        &\textbf{424.93$^*$} &\textbf{424.93} &0 &3401.9 
        &\textbf{424.93$^*$} &\textbf{424.93} &0 &3 &1893.3
        &NA &417.06 &1.85  &7200.0 \\
        a3-36 &500.57  &\textbf{491.80} &0.45 &2191.6 
        &\textbf{494.04} &\textbf{493.71} &0.07 &7200.0
        &\textbf{494.04} &\textbf{494.01} &0.01 &20 &7200.0
        &494.04 &485.91 &1.65 &7200.0  \\
        a4-16 &\textbf{223.13$^*$}  &\textbf{223.13} &0 &8.4 
        &\textbf{223.13$^*$} &\textbf{223.13} &0 &8.4
        &\textbf{223.13$^*$} &\textbf{223.13} &0 &1 &8.4
        &\textbf{223.13$^*$} &\textbf{223.13} &0 &67.2\\
        a4-24 &318.24 &316.24 &0.13 &84.3 
        &\textbf{316.65$^{**}$} &\textbf{316.65} &0 &129.7
        &\textbf{316.65$^{*}$} &\textbf{316.65} &0 &7 &164.4
        &\textit{318.21$^*$} &\textit{318.19} &-0.49 &1834.8 \\
        a4-32 &\textbf{397.87}  &\textbf{396.84} &0.26 &734.7  
        &\textbf{397.87} &\textbf{397.78} &0.02 & 7200.0
        &\textbf{397.87$^{*}$} &\textbf{397.87} &0 &9 &1638.6
        &430.07 &387.99  &2.48 &7200.0 \\
        a4-40 &\textbf{467.72}  &\textbf{466.96} &0.16 &3252.4  
        &\textbf{467.72$^*$} &\textbf{467.72} &0 &5238.4
        &\textbf{467.72$^*$} &\textbf{467.72} &0 &9 &3987.4
        &NA &443.62  &5.15 &7200.0 \\
        a4-48  &NA  &\textit{476.54} &NA &7200.0  
        &NA  &\textit{476.54} &NA &7200.0
        &NA  &\textit{476.54} &NA &1 &7200.0
        &NA &524.92 &NA &7200.0 \\
        a5-40 &\textbf{418.75} &\textbf{417.25} &0.36 &2182.2 
        &\textbf{418.75$^*$} &\textbf{418.75} & 0 &6808.1
        &\textbf{418.75$^*$} &\textbf{418.75} & 0  &15 &1572.0
        &447.63 &405.99  &4.78 &7200.0 \\
        a5-50 &NA  &\textit{176.58} &NA &7200.0  
        &NA  &\textit{176.58} &NA &7200.0 
        &\textbf{690.79}  &\textit{327.84} &NA  & 1 &7200.0  
        &NA &522.37 &NA &7200.0 \\
        \hline
        Avg & & &\textbf{0.32} & 1893.7
        & & &\textbf{0.01} &3394.5
        & & &\textbf{0} &5.4 &2428.6
        & & &1.70 &4333.5\\
        \hline
        Summary &$\#opt$ & $\#best lb$ &$\overline{LB}\%$ &$\#best ub$   
        &$\#opt$ & $\#best lb$ &$\overline{LB_1}\%$ & $\#best ub$
        &$\#opt$ & $\#best lb$ &$\overline{LB'}\%$ &$\overline{N_{node}}$ &$\#best ub$  
        &$\#opt$ & $\#best lb$ & $\overline{LB_2}\%$  & $\#best ub$\\
        & 28 &34 &0.13 &36 
        &37 &40 &0.003 &40
        &39 &40 &0 &3.5 &40
        &24 &26 &1.06 &28\\
        \hline
    \end{tabular}
    \begin{tablenotes}
    \footnotesize
     \item a: Due to incorrect big M values, some of the reported results of \cite{bongiovanni2019electric} are higher than the actual optimal values. Those results are highlighted in italics;
   \end{tablenotes}
    \end{center}
\end{table}

\begin{table}[!hp]
\renewcommand\arraystretch{0.8}
\setlength\tabcolsep{0.5pt}
    \caption{Overview of E-ADARP solution approaches on type-u instances under $\gamma=0.1, 0.4, 0.7$}
    \vspace{-4mm}
    \label{Uber instances}
    \begin{center}
    \footnotesize
    \begin{tabular}{c| c c c c| c c c c| c c c c c|c c c c}
    \hline
        \textbf{$\gamma = 0.1$}&\multicolumn{4}{c}{CG results} &\multicolumn{4}{c}{CG with cutting planes} &\multicolumn{5}{c}{B\&P results} & \multicolumn{4}{c}{B\&C (Bongiovanni et al., 2019)$^a$}\\
        \hline
        Instance & $Obj$ & $LB$ & $LB\%$ &CPU(s) 
        &$Obj_1$ &$LB_1$ &$LB_1\%$ &CPU(s)
        &$Obj'$ & $LB'$ & $LB'\%$  &$N_{node}$ & CPU$'$
        &$Obj_2$ & $LB_2$ & $LB_2\%$ &CPU(s) \\
        \hline
        u2-16 &\textbf{57.61}  &57.08 &0.92  &43.6 
        &\textbf{57.61$^*$} &\textbf{57.61} &0 &72.1
        &\textbf{57.61$^*$} &\textbf{57.61} &0 &3 &22.2
        &\textbf{57.61$^*$} &\textbf{57.61} &0 &21.0\\
        u2-20 &\textbf{55.59$^*$}  &\textbf{55.59} &0 &69.3  
        &\textbf{55.59$^*$} &\textbf{55.59} &0 &69.3
        &\textbf{55.59$^*$} &\textbf{55.59} &0 &1 &69.3
        &\textbf{55.59$^*$} &\textbf{55.59} &0 &9.6\\
        u2-24 &91.36  &90.55 &0.12 &851.7 
        &\textbf{90.66$^{**}$} &\textbf{90.66} &0 &7001.9
        &\textbf{90.66$^{**}$} &\textbf{90.66} &0 &7 &1649.0
        &\textit{91.27$^*$} & \textit{91.27} & -0.67 &432.0  \\
        u3-18 &\textbf{50.74$^*$} &\textbf{50.74} &0 &14.0 
        &\textbf{50.74$^*$} &\textbf{50.74} &0 &14.0
        &\textbf{50.74$^*$} &\textbf{50.74} &0 &1 &14.0
        &\textbf{50.74$^*$} &\textbf{50.74} &0 &10.8 \\
        u3-24 &\textbf{67.56$^*$} &\textbf{67.56} &0 &40.3  
        &\textbf{67.56$^*$} &\textbf{67.56} &0 &40.3
        &\textbf{67.56$^*$} &\textbf{67.56} &0 &1 &40.3
        &\textbf{67.56$^*$} &\textbf{67.56} &0 &130.2 \\
        u3-30 &\textbf{76.75$^*$} &\textbf{76.75} &0 &570.8  
        &\textbf{76.75$^*$} &\textbf{76.75} &0 &570.8
        &\textbf{76.75$^*$} &\textbf{76.75} &0 &1 &570.8
        &\textbf{76.75$^*$} &\textbf{76.75} &0 &438.0 \\
        u3-36 &104.39 &103.94 &0.10 &1894.6  
        &\textbf{104.04$^*$} &\textbf{104.04} &0 &3293.4
        &\textbf{104.04$^*$} &\textbf{104.04} &0 &11 &4204.6
        &\textbf{104.04$^*$} &\textbf{104.04} &0 &1084.8 \\
        u4-16 &\textbf{53.58$^*$} &\textbf{53.58} &0 &1.9  
        &\textbf{53.58$^*$} &\textbf{53.58} &0 &1.9
        &\textbf{53.58$^*$} &\textbf{53.58} &0 &1 &1.9
        &\textbf{53.58$^*$} &\textbf{53.58} &0 &48.0 \\
        u4-24 &\textbf{89.83$^*$}  &\textbf{89.83} &0 &25.5  
        &\textbf{89.83$^*$} &\textbf{89.83} &0 &25.5
        &\textbf{89.83$^*$} &\textbf{89.83} &0 &1 &25.5
        &\textbf{89.83$^*$} &\textbf{89.83} &0 &13.2 \\
        u4-32 &\textbf{99.29$^*$}  &\textbf{99.29} &0 &366.2  
        &\textbf{99.29$^*$} &\textbf{99.29} &0 &366.2
        &\textbf{99.29$^*$} &\textbf{99.29} &0 &1 &366.2
        &\textbf{99.29$^*$} &\textbf{99.29} &0 &1158.6 \\
        u4-40 &\textbf{133.11$^*$}  &\textbf{133.11} &0 &1376.6  
        &\textbf{133.11$^*$} &\textbf{133.11} &0 &1376.6
        &\textbf{133.11$^*$} &\textbf{133.11} &0 &1 &1376.6
        &\textbf{133.11$^*$} &\textbf{133.11} &0 &185.4 \\
        u4-48 &\textbf{148.08} &\textbf{147.02} &0.72 &7200.0  
        &\textbf{148.08} &\textbf{147.02} &0.72 &7200.
        &\textbf{148.08} &\textbf{147.02} &0.72 &1 &7200.0
        &148.30 &134.48 &9.18 &7200.0 \\
        u5-40 &\textbf{121.86$^*$} &\textbf{121.86} &0 &496.9  
        &\textbf{121.86$^*$} &\textbf{121.86} &0 &496.9
        &\textbf{121.86$^*$} &\textbf{121.86} &0 &1 &496.9
        &\textbf{121.86} &114.12 &6.35 &7200.0 \\
        u5-50 &\textbf{142.82}  &\textbf{142.75} &0.05 &7100.9 
        &\textbf{142.82} &\textbf{142.75} &0.05 &7200.0
        &\textbf{142.82} &\textbf{142.77} &0.04 &7 &7200.0
        &143.10 &132.69 &7.09 &7200.0 \\
        \hline
        Avg & & &\textbf{0.14} &1432.3
        & & &\textbf{0.06} &1659.8
        & & &\textbf{0.06} &9.5 &1659.8
        & & &1.66 &1795.1\\
        \hline
        \textbf{$\gamma = 0.4$} & $Obj$ & $LB$ & $LB\%$ &CPU(s) 
        &$Obj_1$ &$LB_1$ &$LB_1\%$ &CPU(s)
        &$Obj'$ & $LB'$ & $LB'\%$  &$N_{node}$ & CPU$'$
        &$Obj_2$ & $LB_2$ & $LB_2\%$ &CPU(s) \\
        \hline
        u2-16 &\textbf{57.65$^*$} &\textbf{57.65} &0 &53.2  
        &\textbf{57.65$^*$} &\textbf{57.65} &0 &53.2
        &\textbf{57.65$^*$} &\textbf{57.65} &0 &1 &53.2
        &\textbf{57.65$^*$} &\textbf{57.65} &0 &25.8\\
        u2-20 &56.61  &56.06 &0.50 &407.3  
        &\textbf{56.34$^*$} &\textbf{56.34} &0 &665.2
         &\textbf{56.34$^*$} &\textbf{56.34} &0 &3 &281.3
        &\textbf{56.34$^*$} &\textbf{56.34} &0 &12.0 \\
        u2-24 &91.62  &90.80 &0.51 &1140.2  
        &\textbf{91.27$^{**}$} &90.95 &0.33 &4266.0
        &\textbf{91.06$^{**}$} &\textbf{91.06} &0 &25 &6917.0
        &\textit{91.63$^*$} &\textit{91.63} &-0.39 &757.2 \\
        u3-18 &\textbf{50.74$^*$}  &\textbf{50.74} &0 &15.0  
        &\textbf{50.74$^*$} &\textbf{50.74} &0 &15.0
        &\textbf{50.74$^*$} &\textbf{50.74} &0 &1 &15.0
        &\textbf{50.74$^*$} &\textbf{50.74} &0 &13.8 \\
        u3-24 &\textbf{67.56$^*$}  &\textbf{67.56} &0 &58.5  
        &\textbf{67.56$^*$} &\textbf{67.56} &0 &58.5
        &\textbf{67.56$^*$} &\textbf{67.56} &0 &1 &58.5
        &\textbf{67.56$^*$} &\textbf{67.56} &0 &220.8 \\
        u3-30 &\textbf{76.75$^*$}  &\textbf{76.75} &0 &500.2 
        &\textbf{76.75$^*$} &\textbf{76.75} &0 &500.2
        &\textbf{76.75$^*$} &\textbf{76.75} &0 &1 &500.2
        &\textbf{76.75$^*$} &\textbf{76.75} &0 &336.6 \\
        u3-36 &\textbf{104.06$^*$} &\textbf{104.06} &0 &1769.0  
        &\textbf{104.06$^*$} &\textbf{104.06} &0 &1769.0
        &\textbf{104.06$^*$} &\textbf{104.06} &0 &1 &1769.0
        &\textbf{104.06$^*$} &\textbf{104.06} &0 &2010.0 \\
        u4-16 &\textbf{53.58$^*$}  &\textbf{53.58} &0 &2.2   
        &\textbf{53.58$^*$} &\textbf{53.58} &0 &2.2
        &\textbf{53.58$^*$} &\textbf{53.58} &0 &1 &2.2
        &\textbf{53.58$^*$} &\textbf{53.58} &0 &44.4 \\
        u4-24 &\textbf{89.83$^*$}  &\textbf{89.83} &0 &30.8   
        &\textbf{89.83$^*$} &\textbf{89.83} &0 &30.8
        &\textbf{89.83$^*$} &\textbf{89.83} &0 &1 &30.8
        &\textbf{89.83$^*$} &\textbf{89.83} &0 &28.2 \\
        u4-32 &\textbf{99.29$^*$}  &\textbf{99.29} &0 &393.4  
        &\textbf{99.29$^*$} &\textbf{99.29} &0 &393.4
        &\textbf{99.29$^*$} &\textbf{99.29} &0 &1 &393.4
        &\textbf{99.29$^*$} &\textbf{99.29} &0 &2667.6 \\
        u4-40 &\textbf{133.78$^{**}$}  &133.37 &0.31 &2186.1  
        &\textbf{133.78$^{**}$} &\textbf{133.46} &0.24 &3958.2
        &\textbf{133.78$^{**}$} &\textbf{133.70} &0.06 &32 &7200.0
        &\textit{133.91$^*$} &\textit{133.91} &-0.10 &2653.2 \\
        u4-48 &\textbf{147.63}  &\textbf{\textit{146.37}} &0.85 &7200.0  
        &\textbf{147.63} &\textbf{\textit{146.37}} &0.85 &7200.0
        &\textbf{147.63} &\textbf{\textit{146.37}} &0.85 & 1 &7200.0
        &NA &133.86 &9.33 &7200.0 \\
        u5-40 &\textbf{121.96$^{*}$}  &\textbf{121.96} &0 &723.7  
        &\textbf{121.96$^*$} &\textbf{121.96} &0 &723.7
        &\textbf{121.96$^*$} &\textbf{121.96} &0 &1 &723.7
        &122.23 &112.58 &7.69 &7200.0 \\
        u5-50 &\textbf{142.84} &\textbf{142.75} &0.06 &7200.0  
        &\textbf{142.84} &\textbf{142.75} &0.06 &7200.0
        &\textbf{142.84} &\textbf{142.75} &0.06 &1 &7200.0
        &143.14 &134.09 &6.13 &7200.0 \\
        \hline
        Avg & & &\textbf{0.16} & 1548.5
        & & &\textbf{0.11} &1916.8
        & & &\textbf{0.07} &5.1 &2310.3
        & & &1.69 &2169.3\\
        \hline
        \textbf{$\gamma = 0.7$} & $Obj$ & $LB$ & $LB\%$ &CPU(s) 
        &$Obj_1$ &$LB_1$ &$LB_1\%$ &CPU(s) 
        &$Obj'$ & $LB'$ & $LB'\%$  &$N_{node}$ & CPU$'$
        &$Obj_2$ & $LB_2$ & $LB_2\%$ &CPU(s) \\
        \hline
        u2-16 &60.01 &58.48 &1.20 &89.6  
        & \textbf{59.19$^*$}&\textbf{59.19} &0 &2597.2
        &\textbf{59.19$^*$}&\textbf{59.19} &0 &39 &679.8
        &\textbf{59.19$^*$} &\textbf{59.19} &0 &338.4 \\
        u2-20 &\textbf{56.86$^*$} &\textbf{56.86} &0 &337.0  
        &\textbf{56.86$^*$} &\textbf{56.86} &0 &337.0
        &\textbf{56.86$^*$} &\textbf{56.86} &0 &1 &337.0
        &\textbf{56.86$^*$} &\textbf{56.86} &0 &72.0 \\
        u2-24 &\textbf{92.17} &\textbf{91.48} &0.75 &2056.3  
        &\textbf{92.17} &\textbf{91.71} &0.50 &7200.0
        &\textbf{92.01} &91.60 &0.52 &30 &7200.0
        &NA &90.83 &1.45 &7200.0 \\
        u3-18 &\textbf{50.99}  &50.94 &0.10 &65.4  
        &\textbf{50.99$^*$} &\textbf{50.99} &0 &116.4
        &\textbf{50.99$^*$} &\textbf{50.99} &0  &3 &45.2
        &\textbf{50.99$^*$} &\textbf{50.99} &0 &24.0 \\
        u3-24 &68.44  &68.00 &0.57 &119.9  
        &68.44 &68.12 &0.47 &176.2
        &\textbf{68.34$^{**}$} &\textbf{68.34} &0 &27 &541.9
        &\textbf{68.39$^*$} &\textbf{68.39} &0 &400.2 \\
        u3-30 &\textbf{77.41$^{**}$} &77.33 &0.10 &955.3  
        &\textbf{77.41$^{**}$} &\textbf{77.41} &0 &1778.3
        &\textbf{77.41$^{**}$} &\textbf{77.41} &0  &5 &1725.5
        &\textit{78.14$^*$} &\textit{78.14} &-0.94 &3401.4 \\
        u3-36 &106.45  &\textbf{105.46} &0.31 &7200.0  
        &106.45 &\textbf{105.46} &0.31 &7200.0
        &\textbf{105.79} &\textbf{105.78} &0.01 & 15 &7200.0
        &\textbf{105.79} &104.37 &1.34 & 7200.0 \\
        u4-16 &\textbf{53.87$^*$}  &\textbf{53.87} &0 &4.0 
        &\textbf{53.87$^*$} &\textbf{53.87} &0 &4.0
        &\textbf{53.87$^*$} &\textbf{53.87} &0 &1 &4.0 
        &\textbf{53.87$^*$} &\textbf{53.87} &0 &88.8 \\
        u4-24 &\textbf{89.96$^*$} &\textbf{89.96} &0 &82.3  
        &\textbf{89.96$^*$} &\textbf{89.96} &0 &82.3
        &\textbf{89.96$^*$} &\textbf{89.96} &0 &1 &82.3
        &\textbf{89.96$^*$} &\textbf{89.96} &0 &22.8 \\
        u4-32 &\textbf{99.50$^*$}  &\textbf{99.50} &0 &1028.6  
        &\textbf{99.50$^*$} &\textbf{99.50} &0 &1028.6
        &\textbf{99.50$^*$} &\textbf{99.50} &0 &1 &1028.6
        &\textbf{99.50$^*$} &\textbf{99.50} &0 &2827.2 \\
        u4-40 &135.33 &\textbf{134.56} &0.54 &1959.5  
        &\textbf{135.29} &\textbf{134.65} &0.47 &5555.5
        &135.76 &\textbf{134.96} &0.59 &43 &7200.0
        &NA &133.01 &2.46 &7200.0 \\
        u4-48 &\textbf{185.16} &NA &NA &7200.0 
        &\textbf{185.16} &NA &NA &7200.0
        &\textbf{185.16} &NA &NA &1 &7200.0
        &NA &\textbf{132.49} &NA &7200.0 \\
        u5-40 &124.01 &\textbf{122.82} &0.81 &4774.6  
        &\textbf{123.82} &\textbf{122.87} &0.77 &7200.0
        &\textbf{122.93$^{*}$} &\textbf{122.93} &0 &7 &6513.9
        &NA &109.28 &11.88 &7200.0 \\
        u5-50 &195.72  &\textit{132.79} &NA &7200.0  
        &195.72 &\textit{132.79} &NA &7200.0
        &195.72 &\textit{132.79} &NA &1 &7200.0
        &\textbf{144.36} &\textbf{133.33} &7.64 &7200.0 \\
        \hline
        Avg & & &\textbf{0.36} & 2362.3 
        & & &\textbf{0.21} &3405.4
        & & &\textbf{0.09} &12.4 &3354.1
        & & &1.98 &3598.2\\
        \hline
        Summary&$\#opt$ & $\#best lb$ &$\overline{LB}\%$ & $\#best ub$ &$\#opt$ & $\#best lb$ &$\overline{LB}\%$ & $\#best ub$
        &$\#opt$ & $\#best lb$ &$\overline{LB}\%$ &$\overline{N_{node}}$ & $\#best ub$
        &$\#opt$ & $\#best lb$ & $\overline{LB'}\%$  & $\#best ub$\\
        &22 &30 &0.22 &35 
        &29 &38 &0.13 &39
        &32 &39 &0.07 &9 &39
        &26 &28 &1.78 &29\\
        \hline
    \end{tabular}
    \begin{tablenotes}
    \footnotesize
     \item a: Due to incorrect big M values, some of the reported results of \cite{bongiovanni2019electric} are higher than the actual optimal values. Those results are highlighted in italics;
   \end{tablenotes}
    \end{center}
\end{table}

\begin{table}[h!]
    \centering
    \begin{threeparttable}
    \caption{Overview of E-ADARP solution approaches on type-r instances under $\gamma=0.1,0.4$}
    \label{Ropke instances}
    \setlength{\belowcaptionskip}{0.5cm}
    \setlength\tabcolsep{4.5pt}
    \footnotesize
    \begin{tabular}{c |c c c |c c c|c c c c c| c }
    \toprule
        \textbf{$\gamma = 0.1$}&\multicolumn{3}{c}{CG results}&\multicolumn{3}{c}{CG with cutting planes}&\multicolumn{5}{c}{B\&P results}&DA \\
        \hline
        Instance & $Obj$ & $LB$ &CPU(s) 
        & $Obj_1$ & $LB_1$ &CPU(s) 
        &$Obj'$ & $LB'$ & $LB'\%$  &$N_{node}$ & CPU$'$
        &BKS \\
        \hline
        r5-60 &\textbf{687.80} &\textbf{682.27} &11263.2  
        &\textbf{683.39$^*$} &\textbf{683.39} &13983.7 
        &\textbf{683.39$^*$} &\textbf{683.39} &0  &7 &7817.0
        &691.83 \\
        r6-48 &\textbf{506.45$^*$} & \textbf{506.45} &1109.1  
        &\textbf{506.45$^*$} & \textbf{506.45} &1109.1
        &\textbf{506.45$^*$} & \textbf{506.45} &0 &1 &1109.1
        &506.72 \\
        r6-60  & \textbf{692.25} & \textbf{689.31} & 2389.2  
        &\textbf{689.45$^*$} &\textbf{689.45} &16387.4
        &\textbf{689.45$^*$} &\textbf{689.45} &0 &15 &3695.7
        &692.00 \\
        r6-72  & \textbf{761.34} & \textbf{\textit{748.90}} & 18000.0 
        & \textbf{761.34} & \textbf{\textit{748.90}} & 18000.0
        & \textbf{761.34} & \textbf{\textit{748.90}} &NA &1 &18000.0
        &777.44  \\
        r7-56 & \textbf{613.19} & \textbf{611.97} & 1402.5 
        &\textbf{612.02$^*$} &\textbf{612.02} &4975.3
        &\textbf{612.02$^*$} &\textbf{612.02} &0 &3 &810.8
        &613.10  \\
        r7-70 & \textbf{760.23}  & \textbf{753.56} & 7275.6  
        &\textbf{754.28} &\textbf{753.56} &18000.0
        &\textbf{754.28$^*$} &\textbf{754.28} & 0 &31 &18000.0
        &760.90 \\
        r7-84 &975.26   &\textbf{\textit{638.59}} &18000.0  
        &975.26   &\textbf{\textit{638.59}} &18000.0
        &975.26   &\textbf{\textit{638.59}} &NA &1 &18000.0
        &\textbf{889.38} \\
        r8-64& \textbf{632.22$^*$}  & \textbf{632.22}  & 3246.5 
        & \textbf{632.22$^*$}  & \textbf{632.22}  & 3246.5 
        & \textbf{632.22$^*$}  & \textbf{632.22} &0 &1 &3246.5
        &641.99 \\
        r8-80 & \textbf{788.99$^*$} & \textbf{788.99} & 14563.8 
        &\textbf{788.99$^*$} & \textbf{788.99} & 14563.8
        &\textbf{788.99$^*$} & \textbf{788.99} &0 &1 &14563.8
        &803.52 \\
        r8-96 &1329.75  &\textbf{\textit{651.32}}  &18000.0 
        &1329.75  &\textbf{\textit{651.32}}  &18000.0
        &1329.75  &\textbf{\textit{651.32}} &NA &1 &18000.0
        &\textbf{1053.11} \\
        \hline
        Avg & &  &9525.0   & & &12626.6 & & &0 &6.2 &10324.3 & \\
        \hline
        $\gamma = 0.4$ & $Obj$ & $LB$ &CPU(s) 
        & $Obj_1$ & $LB_1$ &CPU(s) 
        &$Obj'$ & $LB'$ & $LB'\%$  &$N_{node}$ & CPU$'$
        &BKS \\
        \hline
        r5-60 &\textbf{686.85}  &\textbf{682.76} &15555.3  &\textbf{686.85} &\textbf{683.56} &18000.0
        &\textbf{684.49$^*$} &\textbf{684.49} &0 &9 &17281.3
        &697.97 \\
        r6-48 &\textbf{506.45$^*$}  &\textbf{506.45}  &1195.6  &\textbf{506.45$^*$}  &\textbf{506.45}  &1195.6
        &\textbf{506.45$^*$}  &\textbf{506.45} &0 &1 &1195.6 
        &506.91  \\
        r6-60  & 695.69 &\textbf{688.81} &4109.1 
        &\textbf{689.48} &\textbf{689.33} &18000.0
        &\textbf{689.46$^*$} &\textbf{689.46} &0 &3 &2521.1
        &694.78 \\
        r6-72  & NA  & NA &18000.0  & NA  & NA &18000.0
        & NA  & NA &NA &1 &18000.0
        &\textbf{799.60} \\
        r7-56 & \textbf{612.17}  &\textbf{611.97}  & 1496.3  &\textbf{612.02$^*$} &\textbf{612.02} &3991.7
        &\textbf{612.02$^*$} &\textbf{612.02} &0 &3 &803.0
        &613.66 \\
        r7-70 &\textbf{759.27}   &\textbf{753.56}  &10380.5  &\textbf{754.28$^*$} &\textbf{754.28} &16540.0
        &\textbf{754.28$^*$} &\textbf{754.28} &0 &11 &13664.6
        &766.05 \\
        r7-84 &1081.76  &NA  &18000.0  &1081.76  &NA  &18000.0
        &1081.76  &NA &NA &1 &18000.0 
        &\textbf{932.12} \\
        r8-64 &\textbf{632.22$^*$}   &\textbf{632.22}  &3030.0 &\textbf{632.22$^*$}   &\textbf{632.22}  &3030.0
        &\textbf{632.22$^*$}   &\textbf{632.22} &0 &1 &3030.0
        &638.36 \\
        r8-80 &\textbf{788.99} &\textbf{\textit{780.81}}  &18000.0 &\textbf{788.99} &\textbf{\textit{780.81}}  &18000.0
        &\textbf{788.99} &\textbf{\textit{780.81}} &NA &1 &18000.0
        &811.19 \\
        \hline
        Avg & &  &9974.1 & & &12750.8 
        & & & &0 &10277.3 &\\
        \hline
        Summary & $\#opt$ &$\#best ub$ & $\#best lb$ 
        & $\#opt$ &$\#best ub$ & $\#best lb$ 
        & $\#opt$ &$\#best lb$ & $\overline{LB}\%$ &$\overline{N_{node}}$ &$\#best ub$ 
        & $\#best ub$  \\
        &5 &14 &17 &10 &15 &17 
        &13 &17 &0 &3.4 &16
        &4 \\
        \hline
    \end{tabular}
   \end{threeparttable}
\end{table}

\subsubsection{B\&P results on type-a, -u, and -r instances under different minimum battery level restrictions.}
\label{sec:: BP experimental results}
From previous experiments, adding cuts is shown to be efficient in enhancing the quality of lower bounds and can close the integrality gaps for some instances. In this part, we explore the performance of the B\&P algorithm by applying it to solve instances where the CG algorithm obtains fractional solutions. The obtained B\&P results are compared to state-of-the-art results in \cite{bongiovanni2019electric} and \cite{su2023deterministic}. Then, we compare the obtained B\&P results with those of the CG algorithm with adding cuts. This comparison aims to analyze the trade-off between the time saved from adding cuts to enhance the lower bound, which may possibly close the integrality gaps without branching, and the time lost due to the increasing complexity of solving the current node.

Compared with the best-reported B\&C results in \cite{bongiovanni2019electric}, we finally solve 71 out of 84 instances optimally within the two-hour time limit, while the B\&C algorithm can only solve 49 instances optimally. In terms of computational efficiency, we have observed a significant reduction of 24\% with our proposed B\&P algorithm when solving both type-a and type-u instances, as compared to the reported CPU times of the B\&C algorithm. 
Then, we compared our B\&P algorithm results with our results of the CG algorithm with cuts. On type-a and -u instances, the proposed B\&P algorithm solves 5 additional instances optimally, obtains 6 tighter lower bounds, and generates 3 additional new best solutions. In addition, benefiting from the good quality of lower bounds obtained at the root node, only a few nodes of the search tree suffice to close the gap when applying the B\&P. 
As a result, branching seems to be more computationally attractive than adding valid cuts, as subset-row inequalities introduce additional complexity to solve the subproblems while branching will not. We have the same conclusion from the obtained B\&P results on type-r instances. Compared with the results of the CG algorithm with cutting planes, the proposed B\&P algorithm solves 3 more instances into optimality, further tightens 4 previously reported lower bounds, and improves 3 previously generated integer solutions.

\subsubsection{The impact of solving weighted-sum objective function, column pool initialization, and allowing unlimited charging visits.}
\label{interesting perspectives}
\paragraph{\textbf{The impact of considering total excess user ride time in the objective function}} To illustrate the tangible impact of incorporating the total excess user ride time within the objective function, we conduct experiments wherein we solely focused on minimizing the total travel time. For the obtained solutions, we report their corresponding total excess user ride time and compare them to the results we obtained from solving the weighted-sum objective problem. The experimental results can be found in \ref{effect of weighted sum objective}.

From Table \ref{BP results type-a with total travel time only} and Table \ref{BP results type-u with total travel time only}, comparing to the solutions achieved when solely minimizing the total travel time, our analysis reveals the following insights: (1) Across all cases, we have observed substantial reductions in total excess user ride time upon integrating it into the objective function. These improvements come at the cost of only slight increases in total travel time. (2) The above observation holds significant managerial implications, particularly for ride-sharing service providers like Uber. By considering a weighted-sum objective function, they can significantly enhance the quality of their service, ensuring a more favorable user experience while still maintaining operational costs at nearly optimal levels. An interesting avenue for further exploration involves addressing a bi-objective version of the E-ADARP. By identifying all Pareto optimal solutions, we can better understand the trade-off between total travel time and total excess user ride time.

\paragraph{\textbf{The impact of performing the DA algorithm to initialize the column pool}} 
In the previous section, we initialize the column pool by iterating the DA algorithm 500 times and storing all generated columns. To analyze the effect of this initialization strategy on the computational performance, we perform our algorithms with only 1 iteration of the DA algorithm to initialize the column pool. The computational results are presented in \ref{experiments 1 iter}.  

From Table \ref{BP results type-a with 1 iteration} and Table \ref{BP results type-u with 1 iteration}, we have the following findings: (1) Even when initiated with a randomly constructed solution, the B\&P algorithm yields outcomes of similar quality to those obtained using a higher-quality initial column pool. 
(2) The incorporation of the DA algorithm into the B\&P framework seems to help enhance the computational efficiency. By seeding the column pool with high-quality columns, the B\&P algorithm accelerates its convergence towards optimality, resulting in faster solution times. There is a trade-off between the time invested in the initialization phase and the time saved as a result of this high-quality initialization, which accelerates convergence. Based on preliminary experiments, iterating the DA algorithm 500 times is a good choice for initialization. 

\paragraph{\textbf{The impact of allowing unlimited visits to recharging stations}}
In \cite{bongiovanni2019electric}, each recharging station $s \in S$ can be visited at most once. The authors allow multiple visits to each recharging station by replicating the set of recharging stations $S$. Therefore, the maximum number of visits per recharging station (denoted as $N_{max}^s, s \in S$) must be predefined. Only three different values of $N_{max}^s$, $N_{max}^s = \{1,2,3\}$ are tested for type-u instances. From their results, the computational time increases substantially with a higher value of $N_{max}^s$, which prevents them from applying the B\&C algorithm to allow unlimited visits to each recharging station ($N_{max}^s = \infty$). 

In this part, we investigate the case of $N_{max}^s = \infty$. To allow unlimited visits per recharging station, the constraints (\ref{1.3.2}) in MP are removed. In the labeling algorithm, constraints (\ref{visits to rc}) are deleted to allow visits to the same recharging station. The B\&P algorithm is executed on type-a, -u, and -r instances. If the adapted B\&P algorithm solves instances optimally within the maximum time limit, the corresponding results are marked with an asterisk in the column named ``$Obj$". 

In \ref{experiments multiple visits}, Table \ref{multiple type-a}, Table \ref{multiple type-u}, and Table \ref{multiple type-r} show the results of allowing unlimited visits to each recharging station under different settings of $\gamma$ for type-a, -u, and -r instances, respectively. For $\gamma = 0.1$, as the optimal solutions are obtained without visiting recharging stations multiple times for type-a instances, we only conduct experiments for $\gamma = 0.4, 0.7$ for type-a instances.  For type-a and -u instances, our B\&P algorithm obtains optimal solutions for 62 out of 70 instances, among which 60 are solved optimally at the root node without branching. For type-r instances, the proposed B\&P algorithm solves 18 instances optimally (12 instances are solved optimally at the root node). Feasible solutions are yielded for all instances. Hence, we demonstrate in all cases the effectiveness of the B\&P algorithm to solve a more general version of the E-ADARP.

To analyze the effect of allowing unlimited visits, we compare our obtained results with the best-reported results of allowing at most one recharging visit among the B\&P and B\&C algorithms (presented in the column named ``BKS$'$"). The column of ``BKS$'$\%" represents the deviation between BKS and the best-obtained results allowing unlimited charging visits per station. If a solution obtained with $N_{max}^s = \infty$ has a lower cost than with $N_{max}^s = 1$, the corresponding BKS$'$\% is positive. We also report the maximum number of recharging visits to a recharging station for the obtained solutions in the column named ``$N_{max}^s$". 
The major observations include: (1) we find 52 out of 100 obtained solutions contain multiple visits to the same recharging station, especially under $\gamma = 0.7$, and we observe considerable improvements in solution feasibility and quality with setting $N_{max}^s = \infty$; (2) there are several solutions for type-a instances that contain multiple recharging visits but have the same cost as the obtained solutions with $N_{max}^s = 1$ (e.g., a3-24-0.4). This situation is possible as all recharging stations for a type-a instance are at the same location and one can have an equal-cost solution by replacing one recharging station with another; (3) setting $N_{max}^s = 3$ seems enough for solving type-u instances allowing unlimited recharging visits, while one needs to set $N_{max}^s$ to 4 and 8 for type-a and -r instances, respectively.

\section{Conclusion and Extensions} \label{conclusion}
This paper has presented an effective B\&P algorithm relying on a problem-tailored labeling algorithm to solve the E-ADARP. The E-ADARP shares some characteristics with the classical DARP and E-VRP but is mainly differentiated from these problems regarding the following features: a weighted-sum objective function that minimizes total travel time and excess user ride time, EAVs, and partial recharging at recharging stations. 
In this work, we propose a new paradigm to handle the excess-user-ride-time optimization in the labeling algorithm. This paradigm includes (1) a fragment-based representation of paths that replaces a sequence of REFs with a single one, (2) a novel approach that abstracts fragments to arcs with excess user ride time being minimized to build a new sparser graph that preserves all feasible routes of the original graph, and (3) strong dominance rules and constant time feasibility checks on the new graph.

In the computational experiments, the performance of the B\&P algorithm is rigorously evaluated across an extensive range of benchmark instances from \cite{bongiovanni2019electric} and \cite{su2023deterministic}. We first compare the algorithm performance between CG with cutting planes and the B\&P algorithm. With our best-performing method (i.e., the B\&P algorithm), we solve 71 out of 84 instances optimally within the two-hour time limit, while the B\&C algorithm of \cite{bongiovanni2019electric} can only solve 49 of them optimally. In addition, we obtain 26 new best solutions, 54 equal solutions, and 30 improved lower bounds. On larger-sized instances (i.e., type-r instances), we obtain 16 new best solutions, compared with the existing results of \cite{su2023deterministic}. 
Then, we analyze the impact of (1) incorporating the total excess user ride time into the objective, (2) performing the DA algorithm for column pool initialization, and (3) allowing unlimited visits to recharging stations. From our numerical results, we have the following conclusions: (1) By integrating total excess user ride time into the objective, we obtain solutions that achieve dramatic reductions in total excess user ride time, accompanied by only a marginal increase in total travel time. (2) Computing initial columns by means of an existing metaheuristic algorithm contributes towards a faster convergence to optimality. 
(3) The relaxation on charging visits per recharging station allows to obtain feasible solutions for instances that could not be solved in the more restrictive settings and for many of the other instances. Forty-nine improved solutions are obtained, compared to the best-obtained results of allowing at most one recharging visit per station. 



Finally, our CG and B\&P algorithms also offer new insights into designing an exact algorithm for solving a practical version of the electric DARP, considering multiple objectives. One important ``by-product" of our labeling algorithm is the first exact scheduling procedure that can efficiently determine the excess-user-ride-time optimal schedule for a given E-ADARP route. This scheduling procedure can also be applied to optimize excess user ride time in the classical DARP or the DARP with multiple objectives, in which total excess user ride time is minimized in a separate objective. The proposed efficient approaches can be adapted to tackle the dynamic DARP/E-ADARP, where new requests arrive in real-time. 
Another promising direction stemming from our study is to take a more comprehensive analysis of the non-linear nature of recharging and discharging, thereby offering theoretical insights on modeling the E-ADARP in a more energy-realistic context.


\bibliography{mybib.bib} 
\newpage

\appendix
\section{Linear Programming Model} \label{case 2 solving}

We present the Linear Programming (LP) model for calculating the minimum excess user ride time in case of two or more requests are served simultaneously on a fragment $\mathcal{F}$ and $\exists i \in \mathcal{F}$ such that $\Delta_i \neq 0$. 

Let $P_\mathcal{F}$ denote all pickup nodes on $\mathcal{F} = \{1,\cdots,m\}$:

\begin{equation} \label{objective R}
     \min \sum\limits_{i \in P_\mathcal{F}}R_i
\end{equation}

s.t.


\begin{equation} \label{TW2}
\begin{cases}
    T_i = \mathcal{B}_1^l, \quad & \text{if $i = 1$}\\
    T_i = \mathcal{B}_m^l, \quad & \text{if $i = m$}\\
    e_i \leqslant T_i \leqslant l_i, \quad & \text{Otherwise}\\
\end{cases}
\end{equation}

\begin{equation} \label{TW3}
    T_i + s_i + t_{i,j} \leqslant T_j, \quad \forall i \in \mathcal{F} \setminus \{m\}, \quad idx_j = idx_i + 1
\end{equation}
\begin{equation} \label{user ride 1}
    T_{n+i} - (T_i + s_i) \leqslant m_i, \quad \forall i \in P_\mathcal{F}
\end{equation}
\begin{equation} \label{user ride 2}
     T_{n+i} - T_i - s_i - t_{i,n+i} \leqslant R_i, \quad \forall i \in P_\mathcal{F}
\end{equation}
\begin{equation}
      R_i \geqslant 0, \quad \forall i \in P_\mathcal{F}
\end{equation}


where $idx_i$ is the index of node $i$ on the segment. The objective function is to minimize the total excess user ride time. Constraints (\ref{TW2}) to (\ref{TW3}) are time window constraints where we set the service start time at node 1 and node $m$ to $\mathcal{B}_1^l$ and $\mathcal{B}_1^m$ ($\mathcal{B}^l$ is the latest vehicle-waiting-time optimal schedule), respectively. Constraints (\ref{user ride 1}) and constraints (\ref{user ride 2}) are user ride time constraints.

\section{Computational experiments for fragment enumeration}
\label{preliminary for frag enumeration}
In this section, we present experimental details of fragment enumeration for each instance in Table \ref{details seg}, as in \cite{su2023deterministic}. For the sake of completeness, we also present detailed results in this section. 
These details include the number of fragments generated (in the column ``$N_{frag}$"), the average and maximum length of fragments (in columns ``$Leg_{avg}$" and ``$Leg_{max}$"), respectively, the number of time LP is solved (in the column ``$N_{LP}$"), and the total computational time for enumeration in seconds (in the column named ``CPU").

\begin{table}[!htp]
\renewcommand\arraystretch{0.95}
 \centering
 \footnotesize
 \begin{threeparttable}
 \caption{Details of fragments enumeration for all the instances}
 \label{details seg}
   \begin{tabular}{c c c c c c}
        \toprule
        & $N_{frag}$ & $Leg_{avg}$ & $Leg_{max}$ & $N_{LP}$ & CPU(s) \\
        \midrule
        a2-16 &32  & 3.06 & 6  & 0 & 0.94\\
        a2-20 &51  & 3.41 & 6  & 1  & 0.23\\
        a2-24 &64  & 3.72 & 8  & 1  & 0.09\\
        a3-18 &71  & 4.25 & 8  & 4  & 0.04\\
        a3-24 &110 & 4.71 & 12 & 0  & 0.06\\
        a3-30 &89  & 3.66 & 8  & 0  & 0.12\\
        a3-36 &114 & 4.12 & 12 & 1  & 0.27\\
        a4-16 &78  & 4.51 & 8  & 4  & 0.04\\
        a4-24 &91  & 4.07 & 8  & 2 & 0.07\\
        a4-32 &206 & 5.58 & 12 & 3  & 0.20\\
        a4-40 &242 & 5.45 & 12 & 6 & 0.37\\
        a4-48 &355 & 5.33 & 12 & 15 & 0.61\\
        a5-40 &337 & 5.65 & 12 & 3 & 0.38\\
        a5-50 &659 & 8.25 & 24 & 33& 0.99\\
        \hline
        Avg   &178.5 &4.70 &10.57 &5.21 &0.32 \\
        \hline
        u2-16 &61   & 3.80 & 6  & 0  & 1.05\\
        u2-20 &180  & 5.26 & 12 & 7  & 0.32\\
        u2-24 &66   & 3.27 & 4  & 0  & 0.06\\
        u3-18 &78   & 3.95 & 8 & 0  & 0.04\\
        u3-24 &129  & 4.25 & 8  & 0  & 0.08\\
        u3-30 &255  & 5.06 & 8  & 19 & 0.29 \\
        u3-36 &276  & 5.14 & 12 & 12 & 0.30\\
        u4-16 &75  & 4.03 & 8 & 1  & 0.04\\
        u4-24 &57   & 3.19 & 6  & 0  & 0.05\\
        u4-32 &177  & 4.14 & 10 & 3  & 0.21\\
        u4-40 &149  & 4.01 & 8  & 2  & 0.26\\
        u4-48 &1177 & 9.01 & 18 & 7  & 1.69\\
        u5-40 &335  & 5.28 & 14 & 1  & 0.49\\
        u5-50 &584  & 6.13 & 14 & 6  & 0.96\\
        \hline
        Avg   &257.07 &4.75 &9.71 & 4.14 &0.42 \\
        \hline
        r5-60  &632   & 6.44  & 16 & 44  & 2.61 \\
        r6-48   & 4082  & 14.20 & 36 & 414  & 6.89\\
        r6-60   &809  & 6.58  & 18 & 40   & 1.65\\
        r6-72   &1080  & 7.12  & 22 & 36   & 2.51\\
        r7-56   &1089  & 7.92  & 18 & 83   & 1.70\\
        r7-70   &2340  & 8.32  & 18 & 183  & 4.14\\
        r7-84   &2892  & 11.66 & 30 & 405  & 7.77\\
        r8-64   &11694 & 18.23 & 42 & 3517 & 40.52\\
        r8-80   &5822  & 14.89 & 30 & 260  & 14.07\\
        r8-96   &3155  & 9.30  & 26 & 312  & 9.65\\
        \hline
        Avg   &3359.50 &10.47 &25.6 &526.4 &9.15 \\
        \bottomrule
    \end{tabular}
    \end{threeparttable}
\end{table}

\newpage

\section{Single-objective E-ADARP v.s. weighted-sum objective E-ADARP} \label{effect of weighted sum objective}
In this section, we compare results from solving a single-objective function (i.e., minimizing total travel time) with the results of solving the weighted-sum objective function in Table \ref{BP results type-a with total travel time only} and Table \ref{BP results type-u with total travel time only}. The additional notations used in the tables are summarized as follows:
\begin{itemize}
    \item $Obj^{''}$: the objective values of optimal solutions from B\&P in solving the minimization of total travel time;
    \item $Obj_1^{''}$: the total travel times corresponding to optimal solutions of solely minimizing total travel time;
    \item $Obj_2^{''}$: the total excess user ride times corresponding to optimal solutions of solely minimizing total travel time;
    \item $N_{node}^{''}$: the number of nodes explored in the B\&P tree in solely minimizing total travel time;
    \item $CPU^{''}$: the CPU time in seconds of solely minimizing total travel time by applying B\&P algorithm;
    \item $Obj_1^{'}$: the total travel times corresponding to optimal solutions of minimizing weighted-sum objective;
    \item $Obj_2^{'}$: the total excess user times corresponding to optimal solutions of minimizing weighted-sum objective;
    \item Gap$_1$: the gaps between $Obj_1^{''}$ and $Obj_1^{'}$;
    \item Gap$_2$: the gaps between $Obj_2^{''}$ and $Obj_2^{'}$;
    \item -$\infty$: the gaps between $Obj_2^{''}$ and $Obj_2^{'}$ cannot be calculated as $Obj_2^{'} = 0$. 
\end{itemize}

And Gap$_1$, Gap$_2$ are calculated as follows:
$${Gap_k\% = \dfrac {Obj_k^{'}-Obj_k^{''}} {Obj_k^{'}}}\times 100\% \quad k \in \{1,2\}$$ 

\newpage

\begin{table}[!htp]
\renewcommand\arraystretch{0.75}
 \setlength\tabcolsep{3pt}
    \caption{B\&P results on type-a instances under $\gamma=0.1, 0.4, 0.7$: minimizing total travel time only v.s. minimizing the weighted sum of total travel time and total excess user ride time}
    \label{BP results type-a with total travel time only}
    \begin{center}
    \footnotesize
    \begin{tabular}{c|c c c c c| c c c c c|c c}
    \hline
        \textbf{$\gamma = 0.1$}&\multicolumn{5}{c}{Minimizing total travel time only} &\multicolumn{5}{c}{Minimizing the weighted-sum objective}&\multicolumn{2}{c}{Gaps on objectives} \\
        \hline
        Instance & $Obj^{''}$ & $Obj_1^{''}$ & $Obj_2^{''}$ &$N_{node}^{''}$ & CPU$^{''}$ &$Obj^{'}$ &$Obj_1^{'}$ &$Obj_2^{'}$ &$N_{node}$ &CPU$^{'}$(s) & Gap$_1$ & Gap$_2$\\
        \hline
        a2-16 &\textbf{294.25$^*$} &294.25 & 72.98  & 1  & 11.1   
        & \textbf{237.38$^*$} & 299.26 & 51.73  &1  &11.1 
        & 1.67\% & -41.08\%  \\
        a2-20 &\textbf{344.83$^*$} &344.83 & 112.56 & 1  & 56.0   
        & \textbf{279.08$^*$} & 347.89 & 72.63  &1  &70.9 
        & 0.88\% & -54.96\% \\
        a2-24 &\textbf{431.12$^*$} &431.12 & 157.41 & 1  & 204.70  
        & \textbf{346.21$^*$} & 433.26 & 85.08  &1  &243.3  
        & 0.49\% & -85.03\% \\
        a3-18 &\textbf{300.48$^*$} &300.48 & 119.06 & 1  & 7.16    
        & \textbf{236.82$^*$} & 304.34 & 34.24  &1  &5.0
        & 1.27\% & -247.68\%  \\
        a3-24 &\textbf{344.83$^*$} &344.83 & 120.31 & 1  & 76.92   
        & \textbf{274.80$^*$} & 355.78 & 31.87  &1  &81.2  
        & 3.08\% & -277.47\%\\
        a3-30 &\textbf{494.85$^*$} &494.85 & 197.18 & 1  & 176.70  
        & \textbf{413.27$^*$} & 500.69 & 150.99 &1  &221.7  
        & 1.17\% & -30.59\%\\
        a3-36 &\textbf{583.19$^*$} &583.19 & 244.37 & 5  & 998.09  
        & \textbf{481.17$^*$} & 600.85 & 122.12 &1  &730.8  
        & 3.37\% & -100.11\%\\
        a4-16 &\textbf{282.68$^*$} &282.68 & 44.49  & 1  & 2.45    
        & \textbf{222.49$^*$} & 286.45 & 30.60  &5  &5.5  
        & 1.32\% & -45.40\%\\
        a4-24 &\textbf{375.02$^*$} &375.02 & 159.64 & 1  & 19.02   
        & \textbf{310.84$^*$} & 388.83 & 76.88  &1  &25.0 
        & 3.55\% & -107.66\%\\
        a4-32 &\textbf{485.50$^*$} &485.50 & 165.20 & 1  & 98.14   
        & \textbf{393.95$^*$} & 502.34 & 68.80  &1  &143.2   
        & 3.35\% & -140.10\%\\
        a4-40 &\textbf{557.69$^*$} &557.69 & 167.65 & 1  & 240.21  
        & \textbf{453.84$^*$} & 562.44 & 128.04 &1  &653.0  
        & 0.84\% & -30.94\%\\
        a4-48 &\textbf{668.82$^*$} &668.82 & 333.24 & 3  & 1107.98 
        & \textbf{554.54$^*$} & 697.85 & 124.60 &1  &1334.4 
        & 4.36\% & -167.45\% \\
        a5-40 &\textbf{499.25$^*$} &499.25 & 242.78 & 1  & 194.22  
        & \textbf{414.51$^*$} & 524.75 & 83.78  &21  &448.3 
        & 4.86\% & -189.78\% \\
        a5-50 &\textbf{686.62$^*$} &686.62 & 395.64 & 67 & 7028.19 
        & \textbf{559.17$^*$} & 712.28 & 99.81  &1  &496.2 
        & 3.43\% & -296.38\%\\
        \hline
        Avg  & & & & & & & & &2.7 &319.3 &2.40\% &-129.50\%\\
        \hline
        \textbf{$\gamma = 0.4$}& $Obj^{''}$ & $Obj_1^{''}$ & $Obj_2^{''}$ &$N_{node}^{''}$ & CPU$^{''}$ &$Obj^{'}$ &$Obj_1^{'}$ &$Obj_2^{'}$ &$N_{node}$ &CPU$^{'}$(s) & Gap$_1$ & Gap$_2$\\
        \hline
        a2-16 &\textbf{294.25$^*$} &294.25 & 72.98  & 1  & 6.2     
        & \textbf{237.38$^*$} & 299.26 & 51.73  &1  & 12.7    
        & 1.67\% & -41.08\% \\
        a2-20 &\textbf{347.19$^*$} &347.19  & 87.81  & 3  & 99.6    
        & \textbf{280.70$^*$} & 350.06 & 72.63  &1  & 93.8   
        & 0.82\% & -20.90\%\\
        a2-24  &\textbf{432.13$^*$} &432.13 & 157.41 & 1  & 279.9   
        & \textbf{347.04$^*$} & 434.36 & 85.08  &1  & 267.8  
        & 0.51\% & -85.03\%\\
        a3-18 &\textbf{300.48$^*$} &300.48 & 119.06 & 1  & 2.2     
        & \textbf{236.82$^*$} & 304.34 & 34.24  &1  & 5.3   
        & 1.27\% & -247.68\% \\
        a3-24 &\textbf{344.83$^*$} &344.83 & 120.31 & 1  & 50.6    
        & \textbf{274.80$^*$} & 355.78 & 31.87  &1  & 69.7    
        & 3.08\% & -277.47\%\\
        a3-30 &\textbf{494.92$^*$} &494.92 & 198.53 & 1  & 308.3   
        & \textbf{413.34$^*$} & 500.80 & 150.99 &1  & 306.9   
        & 1.17\% & -31.49\%\\
        a3-36 &\textbf{583.17$^*$} &583.17 & 273.05 & 5  & 2993.6 
        & \textbf{482.75$^*$} & 598.16 & 136.51 &13  & 5154.1 
        & 2.51\% & -100.01\%\\
        a4-16 &\textbf{282.68$^*$} &282.68 & 44.49  & 1  & 1.7     
        & \textbf{222.49$^*$} & 286.45 & 30.60  &5  & 3.3  
        & 1.32\% & -45.40\%\\
        a4-24 &\textbf{375.02$^*$} &375.02 & 159.64 & 1  & 8.0     
        & \textbf{311.03$^*$} & 394.48 & 60.68  &1  & 18.4  
        & 4.93\% & -163.08\%\\
        a4-32 &\textbf{485.90$^*$} &485.90 & 165.20 & 1  & 81.8    
        & \textbf{394.26$^*$} & 502.74 & 68.80  &1  & 199.3   
        & 3.35\% & -140.10\%\\
        a4-40 &\textbf{557.69$^*$} &557.69 & 167.65 & 1  & 258.4   
        & \textbf{453.84$^*$} & 562.44 & 128.04 &1  & 792.0  
        & 0.84\% & -30.94\%\\
        a4-48 &\textbf{668.59$^*$} &668.59 & 334.93 & 3  & 1963.5 
        & \textbf{554.60$^*$} & 697.93 & 124.60 &1  & 2292.0
        & 4.20\% & -168.81\%\\
        a5-40 &\textbf{499.25$^*$} &499.25 & 242.78 & 1  & 220.0   
        & \textbf{414.50$^*$} & 524.75 & 83.78  &3  & 323.6   
        & 4.86\% & -189.78\%\\
        a5-50 &\textbf{693.84$^*$} &693.84 & 395.64 & 53 & 7162.0 
        & \textbf{559.51$^*$} & 712.74 & 99.81  &1  & 1957.1 
        & 2.65\% & -296.38\%\\
        \hline
        Avg  & & & & & & & & &2.3 &821.1 &2.37\% &-131.29\%\\
        \hline
        \textbf{$\gamma = 0.7$} &$Obj^{''}$ & $Obj_1^{''}$ & $Obj_2^{''}$ &$N_{node}^{''}$ & CPU$^{''}$ &$Obj^{'}$ &$Obj_1^{'}$ &$Obj_2^{'}$ &$N_{node}$ &CPU$^{'}$(s) & Gap$_1$ & Gap$_2$\\
        \hline
        a2-16 &\textbf{298.63$^*$} & 298.63 & 72.98  & 3  & 23.3   
        & \textbf{240.66$^*$} & 303.64 & 51.73  &1  &20.8  
        & 1.65\% & -41.08\%  \\
        a2-20 &\textbf{363.27$^*$} & 363.27 & 92.58  & 1  & 625.6  
        & \textbf{293.27$^*$} & 372.95 & 54.20  &5  &1758.6  
        & 2.03\% & -33.43\% \\
        a2-24 &\textbf{440.70$^*$} & 440.70 & 194.73 & 19 & 4508.6 
        & \textbf{353.18$^*$} & 442.55 & 85.08  &1  &1241.9  
        & 0.42\% & -128.90\%\\
        a3-18 &\textbf{303.71$^*$} & 303.71 & 115.54 & 13 & 44.5   
        & \textbf{240.58$^*$} & 309.36 & 34.24  &1  &14.1 
        & 1.83\% & -237.40\%\\
        a3-24 &\textbf{346.38$^*$} & 346.38 & 120.31 & 1  & 124.7  
        & \textbf{275.97$^*$} & 357.33 & 31.87  &1  &100.8  
        & 3.06\% & -277.47\%\\
        a3-30 &\textbf{506.43$^*$} & 506.43 & 211.76 & 3  & 1722.8 
        & \textbf{424.93$^*$} & 515.66 & 152.74 & 3 &1893.3  
        & 1.79\% & -38.64\%\\
        a3-36 &\textbf{599.75$^*$} & 599.75 & 261.41 & 15 & 2513.0 
        & 494.04 & 618.01 & 122.12 & 20 &7200.0 
        & 2.95\% & -114.06\%\\
        a4-16 &\textbf{282.68$^*$} & 282.68 & 44.49  & 1  & 3.4    
        & \textbf{223.13$^*$} & 282.68 & 44.49  &1  &8.4  
        & 0.00\% & 0.00\% \\
        a4-24 & \textbf{381.70$^*$} & 381.70 & 149.72 & 1  & 31.5   
        & \textbf{316.65$^*$} & 401.98 & 60.68  &7  &164.4  
        & 5.05\% & -146.72\%\\
        a4-32 &\textbf{491.23$^*$} & 491.23 & 165.20 & 7  & 398.6  
        & \textbf{397.87$^*$} & 504.80 & 77.08  &9  &1638.6
        & 2.69\% & -114.32\%\\
        a4-40 &\textbf{575.97$^*$} & 575.97 & 178.15 & 9  & 3610.8 
        & \textbf{467.72$^*$} & 580.46 & 129.49 &9  &3987.4   
        & 0.77\% & -37.58\% \\
        a4-48 &NA       &NA        &NA        &1    &7200.0         
        & NA & NA & NA &1  &7200.0  &NA      &NA\\
        a5-40 &\textbf{507.48$^*$}  &507.48  &255.15   &9   &1535.3 
        &\textbf{418.75$^*$} & 531.14 & 81.60  &15  &1572.0  &4.54\%   &-212.67\% \\
        a5-50 &NA      &NA        &NA        &1       &7200.0         
        & \textbf{690.79} & 815.45 & 316.82 &1  &7200.0  &NA      &NA \\
        \hline
        Avg  & & & & & & & & &5.4 &2428.6 &NA &NA\\
        \hline
    \end{tabular}
    \end{center}
\end{table}

\newpage

\begin{table}[!htp]
\renewcommand\arraystretch{0.7}
 \setlength\tabcolsep{3pt}
    \caption{B\&P results on type-u instances under $\gamma=0.1, 0.4, 0.7$: minimizing total travel time only v.s. minimizing the weighted sum of total travel time and total excess user ride time}
    \label{BP results type-u with total travel time only}
    \begin{center}
    \small
    \begin{tabular}{c|c c c c c| c c c c c|c c}
    \hline
        \textbf{$\gamma = 0.1$}&\multicolumn{5}{c}{Minimizing total travel time only} &\multicolumn{5}{c}{Minimizing the weighted-sum objective}&\multicolumn{2}{c}{Gaps on objectives} \\
        \hline
        Instance & $Obj^{''}$ & $Obj_1^{''}$ & $Obj_2^{''}$ &$N_{node}^{''}$ & CPU$^{''}$ &$Obj^{'}$ &$Obj_1^{'}$ &$Obj_2^{'}$ &$N_{node}$ &CPU$^{'}$(s) & Gap$_1$ & Gap$_2$\\
        \hline
        u2-16 & \textbf{73.77$^*$}  & 73.77  & 12.21 & 3  & 18.1    
        & \textbf{57.61$^*$}  & 76.81  & 0.00  &3  &22.2  
        & 3.96\% & -$\infty$   \\
        u2-20 & \textbf{71.56$^*$}  & 71.56  & 15.19 & 1  & 65.2    
        & \textbf{55.59$^*$}  &73.70  & 1.24  &1  &69.3  
        & 2.90\% & -1123.65\%  \\
        u2-24 & \textbf{115.98$^*$} & 115.98 & 16.91 & 3  & 1118.0  
        & \textbf{90.66$^*$}  & 118.49 & 7.43  &7  &1649.0 
        & 2.12\% & -127.53\% \\
        u3-18 &\textbf{66.98$^*$}  & 66.98  & 9.18  & 1  & 30.2    
        & \textbf{50.74$^*$}  & 67.65  & 0.00  &1  &14.0  
        & 0.99\% & -$\infty$  \\
        u3-24 &\textbf{83.95$^*$}  & 83.95  & 29.16 & 1  & 40.3    
        & \textbf{67.56$^*$}  & 86.08  & 12.01 &1  &107.0  
        & 2.47\% & -142.74\%  \\
        u3-30 &\textbf{99.22$^*$}  & 99.22  & 21.77 & 3  & 1002.0  
        & \textbf{76.75$^*$}  & 100.83 & 4.50  &1  & 570.8
        & 1.60\% & -384.03\%  \\
        u3-36 &\textbf{127.97$^*$} & 127.97 & 45.23 & 13 & 6927.9 
        & \textbf{104.04$^*$} & 133.79 & 14.80 &11  &4204.6  
        & 1.79\% & -88.55\%  \\
        u4-16 &\textbf{65.24$^*$}  & 65.24  & 28.07 & 1  & 10.7    
        & \textbf{53.58$^*$}  & 68.76  & 8.06  &1  &1.9  
        & 5.11\% & -248.34\%  \\
        u4-24 &\textbf{115.10$^*$} & 115.10 & 16.13 & 1  & 39.9    
        & \textbf{89.83$^*$}  & 118.21 & 4.67  &1  &25.5  
        & 2.63\% & -245.40\% \\
        u4-32 &\textbf{126.71$^*$} & 126.71 & 29.81 & 3  & 423.5   
        & \textbf{99.29$^*$}  & 129.19 & 9.59  &1  &366.2 
        & 1.92\% & -210.93\%   \\
        u4-40 &\textbf{165.56$^*$} & 165.56 & 62.84 & 1  & 1788.6  
        & \textbf{133.11$^*$} & 168.40 & 27.25 &1  &1376.6  
        & 1.68\% & -130.63\%  \\
        u4-48 &\textbf{181.01$^*$} & 181.01 & 66.09 & 1  & 7196.3 
        & 148.08 & 185.69 & 32.34 &1  &7200.0  
        & 2.52\% & -104.40\%  \\
        u5-40 &\textbf{150.51$^*$} & 150.51 & 43.60 & 1  & 774.7   
        & \textbf{121.86$^*$} & 153.38 & 27.28 &1  &496.9  
        & 1.87\% & -59.81\% \\
        u5-50 &177.40 & 177.40 & 51.88 & 26 & 7200.0 
        & 142.82 & 180.08 & 31.08 &7  &7200.0  
        & 1.48\% & -66.93\% \\
        \hline
        Avg  & & &  &  &  &  &  &   &9.5  &1659.8  &2.24\%  &-$\infty$  \\
        \hline
        \textbf{$\gamma = 0.4$} & $Obj^{''}$ & $Obj_1^{''}$ & $Obj_2^{''}$ &$N_{node}^{''}$ & CPU$^{''}$ &$Obj^{'}$ &$Obj_1^{'}$ &$Obj_2^{'}$ &$N_{node}$ &CPU$^{'}$(s) & Gap$_1$ & Gap$_2$\\
        \hline
        u2-16 &\textbf{73.77$^*$}  & 73.77  & 12.21 & 9  & 68.3    
        & \textbf{57.65$^*$}  & 76.86  & 0.00 &1  &53.2  
        & 4.02\% & -$\infty$  \\
        u2-20 &\textbf{71.56$^*$}  & 71.56  & 15.19 & 1  & 96.2   
        & \textbf{56.34$^*$}  & 73.76  & 5.22  &3  &281.3  
        & 2.99\% & -190.93\%  \\
        u2-24 &\textbf{116.70$^*$} & 116.70 & 14.14 & 5  & 1413.0  
        & \textbf{91.16$^*$}  & 116.70 & 14.14  &25  &6917.0  
        & 0 & 0  \\
        u3-18 &\textbf{66.98$^*$}  & 66.98  & 9.18  & 1  & 18.7    
        & \textbf{50.74$^*$}  & 67.65  & 0.00 &1  &15.0  
        & 0.99\% & -$\infty$  \\
        u3-24 &\textbf{83.95$^*$}  & 83.95  & 29.16 & 1  & 127.9   
        & \textbf{67.56$^*$}  & 86.08  & 12.01 &1  &58.5  
        & 2.47\% & -142.74\% \\
        u3-30 &\textbf{99.22$^*$}  & 99.22  & 20.77 & 3  & 977.0   
        & \textbf{76.75$^*$}  & 100.83 & 4.50  &1 &500.2  
        & 1.60\% & -361.79\%  \\
        u3-36 &128.77 & 128.67 & 51.09 & 12 & 7200.0 
        & \textbf{104.06$^*$} & 132.22 & 19.57 &1  &1769.0  
        & 2.61\% & -161.03\%  \\
        u4-16 &\textbf{65.24$^*$}  & 65.24  & 28.07 & 1  & 3.1     
        & \textbf{53.58$^*$}  & 68.76  & 8.06  &1  & 2.2 
        & 5.11\% & -248.34\%  \\
        u4-24 &\textbf{115.10$^*$} & 115.10 & 16.13 & 1  & 19.9    
        & \textbf{89.83$^*$}  & 118.21 & 4.67  &1  &30.8 
        & 2.63\% & -245.40\% \\
        u4-32 &\textbf{126.55$^*$} & 126.55 & 29.81 & 3  & 609.0   
        & \textbf{99.29$^*$}  & 129.19 & 9.59  &1  &393.4  
        & 2.05\% & -210.93\%\\
        u4-40 &\textbf{165.97$^*$} & 165.97 & 62.84 & 1  & 1907.3  
        & 133.78 & 169.34 & 27.25 &32  &7200.0  
        & 1.99\% & -130.63\% \\
        u4-48 &181.38 & 181.38 & 66.09 & 3  & 7200.0 
        & 147.63 &188.73 & 28.28 &1  &7200.0  
        & 3.90\% &-133.74\% \\
        u5-40 &\textbf{150.51$^*$} & 150.51 & 43.60 & 1  & 1380.5  
        & \textbf{121.96$^*$} & 153.52 & 27.28 &1  &723.7  
        & 1.96\% & -59.81\%  \\
        u5-50 &177.40 & 177.40 & 51.88 & 11 & 7200.0 
        & 142.84 & 180.08 & 31.08 &1  &7200.0  
        & 1.48\% & -66.93\% \\
        \hline
        Avg  & & &  &  &  &  &  &   &5.1  &2310.2  &2.38\%  &-$\infty$  \\
        \hline
        \textbf{$\gamma = 0.7$} & $Obj^{''}$ & $Obj_1^{''}$ & $Obj_2^{''}$ &$N_{node}^{''}$ & CPU$^{''}$ &$Obj^{'}$ &$Obj_1^{'}$ &$Obj_2^{'}$ &$N_{node}$ &CPU$^{'}$(s) & Gap$_1$ & Gap$_2$\\
        \hline
        u2-16 &\textbf{76.08$^*$}  & 76.08  & 13.56 & 53 & 1432.3  
        & \textbf{59.19$^*$}  &78.93  & 0.00  &39  &679.8  
        & 3.61\% & -$\infty$  \\
        u2-20 &\textbf{73.75$^*$}  & 73.75  & 16.24 & 7  & 890.1  
        & \textbf{56.86$^*$}  & 75.40  & 1.24  &1  &337.0  
        & 2.19\% & -1207.73\%  \\
        u2-24 &\textbf{117.54$^*$} & 117.54 & 16.91 & 5  & 6336.3 
        & 92.01  & 120.20 & 7.43   &30  &7200.0  
        &2.21\% & -127.53\%   \\
        u3-18 &\textbf{67.44$^*$}  & 67.44  & 4.56  & 9  & 94.5   
        & \textbf{50.99$^*$}  & 67.99  & 0.00 &3  &45.2  
        & 0.81\% & -$\infty$   \\
        u3-24 &\textbf{84.10$^*$}  & 84.10  & 33.58 & 1  & 119.5  
        & \textbf{68.34$^*$}  & 87.12  & 12.01  &27  & 541.9
        &3.67\% &-179.52\%  \\
        u3-30 &\textbf{99.97$^*$}  & 99.97  & 34.71 & 44 & 6928.0  
        & \textbf{77.41$^*$}  & 101.72 & 4.50  &5  &1725.5  
        & 1.72\% & -671.92\%   \\
        u3-36 &\textbf{131.80$^*$} & 131.80 & 47.84 & 8  & 7164.0 
        & 105.79 & 134.53 & 19.57 &15  &7200.0  
        & 2.02\% & -144.43\%  \\
        u4-16 &\textbf{65.86$^*$}  & 65.86  & 28.07 & 3  & 11.0    
        & \textbf{53.87$^*$}  & 69.14  & 8.06  &1  &4.0  
        & 4.74\% & -248.34\%  \\
        u4-24 &\textbf{115.10$^*$} & 115.10 & 16.13 & 1  & 38.5    
        & \textbf{89.96$^*$}  & 118.40 & 4.67  &1  &82.3  
        & 2.79\% & -245.40\% \\
        u4-32 &\textbf{126.99$^*$} & 126.99 & 29.81 & 23 & 4991.2  
        & \textbf{99.50$^*$}  & 128.58 & 12.26 &1  &1028.6  
        & 1.24\% & -143.23\%   \\
        u4-40 &169.09 &169.09 &62.84  &28  &7200.0  
        &135.76  &173.75  &21.79   & 43 & 7200.0 
        &2.68\%  &-188.43\%  \\
        u4-48 &NA &NA &NA  &1  &7200.0  
        &185.16  &230.49  &49.17   &1  &7200.0  
        &NA  &NA  \\
        u5-40 &\textbf{151.24$^*$} &151.24 &49.50  &7  &7130.7  
        &\textbf{122.93$^*$}  &153.78  &30.40   &7  &6513.9  
        &1.64\%  &-62.85\%  \\
        u5-50 &246.81 &246.81 &92.38  &1  &7200.0  
        &195.72  &254.17  &20.38   &1  &7200.0  
        &2.90\%  &-353.29\%  \\
        \hline
        Avg  & & &  &  &  
        &  &  &   &12.4  &3354.1  
        &NA  &NA  \\
        \hline
    \end{tabular}
    \end{center}
\end{table}

\section{Experimental results of initializing column pool with a single iteration of DA algorithm} \label{experiments 1 iter}
In this section, we present the B\&P results with only 1 iteration of the DA algorithm to initialize the column pool. These results are compared with our B\&P results in Section \ref{sec:: BP experimental results}, which initializes the column pool with 500 iterations of the DA algorithm.

\begin{table}[!hp]
\renewcommand\arraystretch{0.75}
 \setlength\tabcolsep{3pt}
    \caption{B\&P results on type-a instances: initializing column pool with 1 iteration DA v.s. 500 iterations DA}
    \label{BP results type-a with 1 iteration}
    \vspace{-4mm}
    \begin{center}
    \footnotesize
    \begin{tabular}{c|c c c c c| c c c c c|c c c c}
    \hline
        \textbf{$\gamma = 0.1$}&\multicolumn{5}{c}{initializing with 1 DA iteration} &\multicolumn{5}{c}{initializing with 500 DA iterations} &\multicolumn{4}{c}{Bongiovanni et al., (2019) results $^a$} \\
        \hline
        Instance &$Obj_{1iter}$ &$LB_{1iter}$ &$LB_{1iter}\%$  &$N_{node}^{1iter}$ &CPU$_{1iter}$(s) & $Obj^{'}$ & $LB^{'}$ & $LB^{'}\%$  &$N_{node}$ & CPU$^{'}$(s) &$Obj_2$ & $LB_2$ & $LB_2\%$ &CPU(s) \\
        \hline
        a2-16 &\textbf{237.38$^*$} & \textbf{237.38} & 0 & 1 & 42.0   
        & \textbf{237.38$^*$} & \textbf{237.38} & 0 &1  &11.1  
        & \textbf{237.38$^*$} & \textbf{237.38} &0   & 1.2\\
        a2-20 &\textbf{279.08$^*$} & \textbf{279.08} & 0 & 1 & 130.6  
        & \textbf{279.08$^*$} & \textbf{279.08} & 0 &1  & 70.9 
        &\textbf{279.08$^*$} & \textbf{279.08} & 0&4.2\\
        a2-24 &\textbf{346.21$^*$} & \textbf{346.21} & 0 & 1 & 328.4  
        & \textbf{346.21$^*$} & \textbf{346.21} & 0 &1  & 243.3 
        &\textbf{346.21$^*$} & \textbf{346.21} &0 &9.0\\
        a3-18 &\textbf{236.82$^*$} & \textbf{236.82} & 0 & 1 & 10.2   
        & \textbf{236.82$^*$} & \textbf{236.82} & 0 &1  & 5.0 
        &\textbf{236.82$^*$} & \textbf{236.82}  &0 &4.8\\
        a3-24 &\textbf{274.80$^*$} & \textbf{274.80} & 0 & 1 & 83.1   
        & \textbf{274.80$^*$} & \textbf{274.80} & 0 &1  & 81.2 
        &\textbf{274.80$^*$} & \textbf{274.80} &0  &13.8\\
        a3-30 &\textbf{413.27$^*$} & \textbf{413.27} & 0 & 1 & 310.7  
        & \textbf{413.27$^*$} & \textbf{413.27} & 0 &1  & 221.7
        & \textbf{413.27$^*$} &\textbf{413.27}  &0 &102.0 \\
        a3-36 &\textbf{481.17$^*$} & \textbf{481.17} & 0 & 1 & 691.7  
        & \textbf{481.17$^*$} & \textbf{481.17} & 0 &1  &730.8  
        & \textbf{481.17$^*$} &\textbf{481.17}  &0 &106.8\\
        a4-16 &\textbf{222.49$^*$} & \textbf{222.49} & 0 & 5 & 3.5    
        & \textbf{222.49$^*$} & \textbf{222.49} & 0 &5  & 5.5 
        &\textbf{222.49$^*$}  &\textbf{222.49}  &0 & 3.6\\
        a4-24 &\textbf{310.84$^*$} & \textbf{310.84} & 0 & 3 & 43.2   
        & \textbf{310.84$^*$} & \textbf{310.84} & 0 &1  & 25.0 
        &\textbf{310.84$^*$} &\textbf{310.84} &0  &31.2\\
        a4-32 &\textbf{393.95$^*$} & \textbf{393.95} & 0 & 1 & 225.3  
        & \textbf{393.95$^*$} & \textbf{393.95} & 0 &1  & 143.2 
        &\textbf{393.96$^*$} &\textbf{393.96} &0 &612.0\\
        a4-40 &\textbf{453.84$^*$} & \textbf{453.84} & 0 & 1 & 555.9  
        & \textbf{453.84$^*$} & \textbf{453.84} & 0 &1  & 653.0
        &\textbf{453.84$^*$} & \textbf{453.84} &0 &517.2 \\
        a4-48 &\textbf{554.54$^*$} & \textbf{554.54} & 0 & 1 & 2117.2 
        & \textbf{554.54$^*$} & \textbf{554.54} & 0 &1  & 1334.4 
        &\textbf{554.54}  &526.96 &5.04\% & 7200.0\\
        a5-40 &\textbf{414.50$^*$} & \textbf{414.50} & 0 & 9 & 1306.4 
        & \textbf{414.50$^*$} & \textbf{414.50} & 0 &21  & 448.3 
        &\textbf{414.51$^*$} & \textbf{414.51} &0 &1141.8\\
        a5-50 &\textbf{559.17$^*$} & \textbf{559.17} & 0 & 1 & 2502.0 
        & \textbf{559.17$^*$} & \textbf{559.17} & 0 &1  & 496.2 
        &\textbf{559.17} & 531.15  &5.01\% &7200.0\\
        \hline
        Avg  & & &0  &2.0  &596.4  &  &  &0   &2.7  & 319.3 & & &0.72\% &1210.5\\
        \hline
        \textbf{$\gamma = 0.4$}&$Obj_{1iter}$ &$LB_{1iter}$ &$LB_{1iter}\%$  &$N_{node}^{1iter}$ &CPU$_{1iter}$(s) & $Obj^{'}$ & $LB^{'}$ & $LB^{'}\%$  &$N_{node}$ & CPU$^{'}$(s) &$Obj_2$ & $LB_2$ & $LB_2\%$ &CPU(s)\\
        \hline
        a2-16 &\textbf{237.38$^*$} & \textbf{237.38} & 0 & 1  & 18.3   
        & \textbf{237.38$^*$} & \textbf{237.38} & 0 & 1  &12.7  
        &\textbf{237.38$^*$} &\textbf{237.38} &0 &1.8\\
        a2-20 &\textbf{280.70$^*$} & \textbf{280.70} & 0 & 1  & 174.1  
        & \textbf{280.70$^*$} & \textbf{280.70} & 0 & 1  &93.8  
        &\textbf{280.70$^*$}  &\textbf{280.70} &0 &49.8\\
        a2-24 &\textbf{347.04$^*$} & \textbf{347.04} & 0 & 1  & 437.8  
        & \textbf{347.04$^*$} & \textbf{347.04} & 0 & 1  &267.8  
        &\textit{348.04$^*$} &\textit{348.04} &-0.29\% &25.2\\
        a3-18 &\textbf{236.82$^*$} & \textbf{236.82} & 0 & 1  & 6.7    
        & \textbf{236.82$^*$} & \textbf{236.82} & 0 & 1  &5.3  
        &\textbf{236.82$^*$} & \textbf{236.82} &0 &4.2 \\
        a3-24 &\textbf{274.80$^*$} & \textbf{274.80} & 0 & 1  & 153.9  
        & \textbf{274.80$^*$} & \textbf{274.80} & 0 & 1  &69.7   
        &\textbf{274.80$^*$} &\textbf{274.80}  &0 &16.8\\
        a3-30 &\textbf{413.34$^*$} & \textbf{413.34} & 0 & 1  & 409.4  
        & \textbf{413.34$^*$} & \textbf{413.34} & 0 & 1  &306.9   
        &\textit{413.37$^*$} &\textit{413.37} &-0.01\% &99.0\\
        a3-36 &\textbf{482.75$^*$} & \textbf{482.75} & 0 & 11 & 4950.0 
        & \textbf{482.75$^*$} & \textbf{482.75} & 0 & 13 &5154.1  
         &\textit{484.14$^*$} &\textit{484.14} &-0.06\% &306.6\\
        a4-16 &\textbf{222.49$^*$} & \textbf{222.49} & 0 & 3  & 3.6    
        & \textbf{222.49$^*$} & \textbf{222.49} & 0 & 5  &3.3   
        &\textbf{222.49$^*$} &\textbf{222.49} &0 &5.4\\
        a4-24 &\textbf{311.03$^*$} & \textbf{311.03} & 0 & 3  & 54.1   
        & \textbf{311.03$^*$} & \textbf{311.03} & 0 & 1  &18.4 
        &\textbf{311.03$^*$} &\textbf{311.03} &0 &39.6\\
        a4-32 &\textbf{394.26$^*$} & \textbf{394.26} & 0 & 1  & 466.7 
        & \textbf{394.26$^*$} & \textbf{394.26} & 0 & 1  &199.3  
        &\textbf{394.26$^*$} &\textbf{394.26} &0 &681.6\\
        a4-40 &\textbf{453.84$^*$} & \textbf{453.84} & 0 & 1  & 640.8  
        & \textbf{453.84$^*$} & \textbf{453.84} & 0 & 1  &792.0 
        &\textbf{453.84$^*$} &\textbf{453.84} &0 &417.6\\
        a4-48 &\textbf{554.60$^*$} & \textbf{554.60} & 0 & 1  & 4991.9 
        & \textbf{554.60$^*$} & \textbf{554.60} & 0 & 1  &2292.0 
        &\textbf{554.60} &529.22 &4.58\% &7200.0\\
        a5-40 &\textbf{414.51$^*$} & \textbf{414.51} & 0 & 3  & 1116.2 
        & \textbf{414.51$^*$} & \textbf{414.51} & 0 & 3  &323.6  
        &\textbf{414.51$^*$} &\textbf{414.51} &0 &1221.0\\
        a5-50 &\textbf{559.51$^*$} & \textbf{559.51} & 0 & 1  & 2686.0 
        & \textbf{559.51$^*$} & \textbf{559.51} & 0 & 1  &1957.1 
        &560.50 &528.91 &5.47\% &7200.0 \\
        \hline
        Avg  & & & 0 &2.1  &1150.7  &  &  &0   &2.3  &821.1 & & &0.74\% &1233.5 \\
        \hline
        \textbf{$\gamma = 0.7$} &$Obj_{1iter}$ &$LB_{1iter}$ &$LB_{1iter}\%$  &$N_{node}^{1iter}$ &CPU$_{1iter}$(s) & $Obj^{'}$ & $LB^{'}$ & $LB^{'}\%$  &$N_{node}$ & CPU$^{'}$(s) &$Obj_2$ & $LB_2$ & $LB_2\%$ &CPU(s)\\
        \hline
        a2-16 &\textbf{240.66$^*$}  & \textbf{240.66} & 0 & 1  & 35.2    
        &\textbf{240.66$^*$} &\textbf{240.66} &0 &1 &20.8 
        &\textbf{240.66$^*$} &\textbf{240.66} &0 &5.4 \\
        a2-20 &\textbf{293.27$^*$}  & \textbf{293.27} & 0 & 7  & 2525.4  
        &\textbf{293.27$^*$} &\textbf{293.27} &0  &5 &1758.6  
        &NA &287.17 &2.08\% &7200.0\\
        a2-24 &\textbf{353.18$^*$}  & \textbf{353.18} & 0 & 1  & 767.6   
        &\textbf{353.18$^{*}$} &\textbf{353.18} &0 &1 &1241.9 
        &\textit{358.21$^*$} &\textit{358.21}  &-1.42\% &961.2\\
        a3-18 &\textbf{240.58$^*$}  & \textbf{240.58} & 0 & 1  & 23.0    
        &\textbf{240.58$^*$} &\textbf{240.58} &0 &1 & 14.1  
        &\textbf{240.58$^*$} &\textbf{240.58}  &0 &48.0\\
        a3-24 &\textbf{275.97$^*$}  & \textbf{275.97} & 0 & 1  & 197.8   
        &\textbf{275.97$^{*}$} &\textbf{275.97} &0 &1 & 100.8  
        &\textit{277.72$^*$} &\textit{277.72} &-0.63\%  &152.4\\
        a3-30 &\textbf{424.93$^*$}  & \textbf{424.93} & 0 & 3  & 2158.5  
        &\textbf{424.93$^*$} &\textbf{424.93} &0 &3 &1893.3 
        &NA &417.06 &1.85\%  &7200.0 \\
        a3-36 &\textbf{494.04}  & 493.79 & 0.10\% & 22 & 7200.0  
        &\textbf{494.04} &\textbf{494.01} &0.01\% &20 &7200.0  
        &\textbf{494.04} &485.91 &1.65\% &7200.0\\
        a4-16 &\textbf{223.13$^*$}  & \textbf{223.13} & 0 & 1  & 3.4   
        &\textbf{223.13$^{*}$} &\textbf{223.13} &0 &1 &8.4 
        &\textbf{223.13$^*$} &\textbf{223.13} &0 &67.2\\
        a4-24 &\textbf{316.65$^*$}  & \textbf{316.65} & 0 & 7  & 222.8   
        &\textbf{316.65$^{*}$} &\textbf{316.65} &0 &7 &164.4 
        &\textit{318.21$^*$} &\textit{318.19} &-0.49\% &1834.8\\
        a4-32 &\textbf{397.87$^*$}  & \textbf{397.87} & 0 & 7  & 905.9   
        &\textbf{397.87$^{*}$} &\textbf{397.87} &0 &9 &1638.6  
        &430.07 &387.99  &2.48\% &7200.0 \\
        a4-40 &\textbf{467.72$^*$}  & \textbf{467.72} & 0 & 9  & 3053.4  
        &\textbf{467.72$^{*}$} &\textbf{467.72} &0 &9 &3987.4  
        &NA &443.62  &5.15\% &7200.0 \\
        a4-48 &NA & 470.15 & NA  & 1  & 7200.0 
        &NA  &476.54 &NA &1 &7200.0  
        &NA &524.92 &NA &7200.0 \\
        a5-40 &\textbf{418.75$^*$}  & \textbf{418.75} & 0 & 7  & 2013.7  
        &\textbf{418.75$^{*}$} &\textbf{418.75} & 0  &15 &1572.0  
        &447.63 &405.99  &4.78\% &7200.0\\
        a5-50 &NA & 179.35 & NA  & 1  & 7200.0   
        &NA  &176.58 &NA  & 1 &7200.0 
        &NA &522.37 &NA &7200.0 \\
        \hline
        Avg  & & &0  &4.9  &2393.3  &  &  &0   &5.4  &2428.6 
        & & &1.70\% &4333.5\\
        \hline
        Summary &$\#opt$ & $\#best lb$ &$\overline{LB_{1iter}}\%$ &$\overline{N_{node}^{1iter}}$ & $\#best ub$ &$\#opt$ & $\#best lb$ &$\overline{LB'}\%$ &$\overline{N_{node}}$ &$\#best ub$  
        &$\#opt$ & $\#best lb$ & $\overline{LB_2}\%$  & $\#best ub$\\
        &39 &39 &0.003\% & &40 
        &39 &40 &0 &3.5 &40
        &24 &26 &1.06\% &28\\
        \hline
    \end{tabular}
    \end{center}
\end{table}

\begin{table}[!hp]
\renewcommand\arraystretch{0.8}
 \setlength\tabcolsep{3pt}
    \caption{B\&P results on type-u instances: initializing column pool with 1 iteration DA v.s. 500 iterations DA}
    \label{BP results type-u with 1 iteration}
    \begin{center}
    \footnotesize
    \begin{tabular}{c|c c c c c| c c c c c| c c c c}
    \hline
        \textbf{$\gamma = 0.1$}&\multicolumn{5}{c}{initializing with 1 DA iteration} &\multicolumn{5}{c}{initializing with 500 DA iteration}&\multicolumn{4}{c}{Bongiovanni et al., (2019) results $^a$} \\
        \hline
        Instance &$Obj_{1iter}$ &$LB_{1iter}$ &$LB_{1iter}\%$  &$N_{node}^{1iter}$ &CPU$_{1iter}$(s) & $Obj^{'}$ & $LB^{'}$ & $LB^{'}\%$  &$N_{node}$ & CPU$^{'}$(s) &$Obj_2$ & $LB_2$ & $LB_2\%$ &CPU(s)\\
        \hline
        u2-16 &\textbf{57.61$^*$}    & \textbf{57.61}    & 0 & 3  & 84.1    &\textbf{57.61$^*$} &\textbf{57.61} &0 &3 &22.2 
        &\textbf{57.61$^*$} &\textbf{57.61} &0 &21.0\\
        u2-20 &\textbf{55.59$^*$}    & \textbf{55.59}    & 0 & 1  & 765.2   &\textbf{55.59$^*$} &\textbf{55.59} &0 &1 &69.3 
        &\textbf{55.59$^*$} &\textbf{55.59} &0 &9.6\\
        u2-24 &\textbf{90.66$^*$}    & \textbf{90.66}    & 0 & 13 & 4524.2  &\textbf{90.66$^{*}$} &\textbf{90.66} &0 &7 &1649.0 
         &\textit{91.27$^*$} & \textit{91.27} & -0.67\% &432.0 \\
        u3-18 &\textbf{50.74$^*$}    & \textbf{50.74}    & 0 & 1  & 94.6    &\textbf{50.74$^*$} &\textbf{50.74} &0 &1 &14.0
        &\textbf{50.74$^*$} &\textbf{50.74} &0 &10.8\\
        u3-24 &\textbf{67.56$^*$}    & \textbf{67.56}    & 0 & 1  & 309.3   &\textbf{67.56$^*$} &\textbf{67.56} &0 &1 &40.3  
        &\textbf{67.56$^*$} &\textbf{67.56} &0 &130.2 \\
        u3-30 &\textbf{76.75$^*$}    & \textbf{76.75}    & 0 & 1  & 1753.4  &\textbf{76.75$^*$} &\textbf{76.75} &0 &1 &570.8
        &\textbf{76.75$^*$} &\textbf{76.75} &0 &438.0\\
        u3-36 &\textbf{104.04$^*$}   & \textbf{104.04}   & 0 & 5  & 5375.1  &\textbf{104.04$^*$} &\textbf{104.04} &0 &11 &4204.6  
        &\textbf{104.04$^*$} &\textbf{104.04} &0 &1084.8 \\
        u4-16 &\textbf{53.58$^*$}    &\textbf{53.58}    & 0 & 1  & 62.4   &\textbf{53.58$^*$} &\textbf{53.58} &0 &1 &1.9
        &\textbf{53.58$^*$} &\textbf{53.58} &0 &48.0\\
        u4-24 &\textbf{89.83$^*$}    &\textbf{89.83}    & 0 & 1  & 60.4   &\textbf{89.83$^*$} &\textbf{89.83} &0 &1 &25.5 
        &\textbf{89.83$^*$} &\textbf{89.83} &0 &13.2\\
        u4-32 &\textbf{99.29$^*$}    &\textbf{99.29}    & 0 & 1  & 717.1   &\textbf{99.29$^*$} &\textbf{99.29} &0 &1 &366.2   
        &\textbf{99.29$^*$} &\textbf{99.29} &0 &1158.6\\
        u4-40 &\textbf{133.11$^*$}   &\textbf{133.11}   & 0 & 1  & 3273.5 &\textbf{133.11$^*$} &\textbf{133.11} &0 &1 &1376.6
        &\textbf{133.11$^*$} &\textbf{133.11} &0 &185.4\\
        u4-48 &NA &NA &NA  &1  &7200.0  
        &\textbf{148.08} &147.02 &0.72\% &1 &7200.0 
        &148.30 &134.48 &9.18\% &7200.0 \\
        u5-40 &\textbf{121.86$^*$}   & \textbf{121.86}   & 0 & 1  & 1542.1  &\textbf{121.86$^*$} &\textbf{121.86} &0 &1 &496.9 
        &\textbf{121.86} &114.12 &6.35\% &7200.0\\
        u5-50 &\textbf{142.82}   & \textbf{142.75}   & 0.05\% & 3  & 7200.0 
        &\textbf{142.82} &\textbf{142.75} &0.05\% &7 &7200.0  
        &143.10 &132.69 &7.09\% &7200.0\\
        \hline
        Avg  & & &  & 2.4 &2354.4  &  &  &0.06\%   &9.5  &1659.8  
        & & &1.66\% &1795.1\\
        \hline
        \textbf{$\gamma = 0.4$}&$Obj_{1iter}$ &$LB_{1iter}$ &$LB_{1iter}\%$  &$N_{node}^{1iter}$ &CPU$_{1iter}$(s) & $Obj^{'}$ & $LB^{'}$ & $LB^{'}\%$  &$N_{node}$ & CPU$^{'}$(s) &$Obj_2$ & $LB_2$ & $LB_2\%$ &CPU(s) \\
        \hline
        u2-16 &\textbf{57.65$^*$}  & \textbf{57.65}  & 0  & 3  & 72.6  
        &\textbf{57.65$^*$} &\textbf{57.65} &0 &1 &53.2  
        &\textbf{57.65$^*$} &\textbf{57.65} &0 &25.8\\
        u2-20 &\textbf{56.34$^*$}  &\textbf{56.34}  & 0  & 5  & 793.9 
        &\textbf{56.34$^*$} &\textbf{56.34} &0 &3 &281.3  
        &\textbf{56.34$^*$} &\textbf{56.34} &0 &12.0\\
        u2-24 &\textbf{91.16$^*$}  & \textbf{91.16}  & 0  & 9  & 3304.4 
        &\textbf{91.16$^{*}$} &\textbf{91.16} &0 &25 &6917.0  
        &\textit{91.63$^*$} &\textit{91.63} &-0.52\% &757.2\\
        u3-18 &\textbf{50.74$^*$}  & \textbf{50.74}  & 0  & 1  & 47.2  
        &\textbf{50.74$^*$} &\textbf{50.74} &0 &1 &15.0  
        &\textbf{50.74$^*$} &\textbf{50.74} &0 &13.8\\
        u3-24 &\textbf{67.56$^*$}  &\textbf{67.56}  & 0  & 1  & 257.7   
        &\textbf{67.56$^*$} &\textbf{67.56} &0 &1 &58.5  
        &\textbf{67.56$^*$} &\textbf{67.56} &0 &220.8\\
        u3-30 &\textbf{76.75$^*$}  & \textbf{76.75}  & 0  & 1  & 1027.7  
        &\textbf{76.75$^*$} &\textbf{76.75} &0 &1 &500.2  
        &\textbf{76.75$^*$} &\textbf{76.75} &0 &336.6\\
        u3-36 &\textbf{104.06$^*$} &\textbf{104.06} & 0  & 1  & 4816.7  
        &\textbf{104.06$^*$} &\textbf{104.06} &0 &1 &1769.0 
        &\textbf{104.06$^*$} &\textbf{104.06} &0 &2010.0\\
        u4-16 &\textbf{53.58$^*$}  &\textbf{53.58}  & 0  & 1  & 14.8  
        &\textbf{53.58$^*$} &\textbf{53.58} &0 &1 &2.2  
        &\textbf{53.58$^*$} &\textbf{53.58} &0 &44.4 \\
        u4-24 &\textbf{89.83$^*$}  &\textbf{89.83}  & 0  & 1  & 67.9   
        &\textbf{89.83$^*$} &\textbf{89.83} &0 &1 &30.8 
        &\textbf{89.83$^*$} &\textbf{89.83} &0 &28.2\\
        u4-32 &\textbf{99.29$^*$}  &\textbf{99.29}  & 0  & 1  & 1256.7 
        &\textbf{99.29$^*$} &\textbf{99.29} &0 &1 &393.4 
        &\textbf{99.29$^*$} &\textbf{99.29} &0 &2667.6\\
        u4-40 &\textbf{133.78} & 133.62 & 0.12\%  & 32 & 7200.0 
        &\textbf{133.78$^{*}$} &\textbf{133.70} &0.06\% &32 &7200.0  
        &\textit{133.91$^*$} &\textit{133.91} &-0.10\% &2653.2\\
        u4-48 &194.97 & \textit{67.69}  & NA & 1  &7200.0  
        &\textbf{147.63} &\textbf{146.37} &0.85\% & 1 &7200.0  
        &NA &133.86 &9.33\% &7200.0 \\
        u5-40 &\textbf{121.96$^*$} & \textbf{121.96} & 0  & 1  & 2621.6  
        &\textbf{121.96$^*$} &\textbf{121.96} &0 &1 &723.7  
        &122.23 &112.58 &7.69\% &7200.0\\
        u5-50 &157.55 & \textbf{142.76} & 9.39\%  & 6  & 7200.0 
        &\textbf{142.84} &142.75 &0.06\% &1 &7200.0  
        &143.14 &134.09 &6.13\% &7200.0\\
        \hline
        Avg  & & &  &4.6  &2562.9  &  &  &0.07\%   &5.1  &2310.3 
        & & &1.70\% &2169.3\\
        \hline
        \textbf{$\gamma = 0.7$} &$Obj_{1iter}$ &$LB_{1iter}$ &$LB_{1iter}\%$  &$N_{node}^{1iter}$ &CPU$_{1iter}$(s) & $Obj^{'}$ & $LB^{'}$ & $LB^{'}\%$  &$N_{node}$ & CPU$^{'}$(s) &$Obj_2$ & $LB_2$ & $LB_2\%$ &CPU(s) \\
        \hline
        u2-16 &\textbf{59.19$^*$}   & \textbf{59.19}  & 0   & 53 & 1149.8  & \textbf{59.19$^*$}&\textbf{59.19} &0 &39 &679.8  
        &\textbf{59.19$^*$} &\textbf{59.19} &0 &338.4 \\
        u2-20 &\textbf{56.86$^*$}   &\textbf{56.86}  & 0    & 1  & 1627.6
        &\textbf{56.86$^*$} &\textbf{56.86} &0 &1 &337.0  
        &\textbf{56.86$^*$} &\textbf{56.86} &0 &72.0\\
        u2-24 &\textbf{92.01}   & \textbf{92.00} & 0.01\%    & 33 & 7200.0  
        &92.08 &91.60 &0.52\% &30 &7200.0  
        &NA &90.83 &1.36\% &7200.0 \\
        u3-18 &\textbf{50.99$^*$}   & \textbf{50.99}  & 0   & 5  & 293.5  
        &\textbf{50.99$^*$} &\textbf{50.99} &0  &3 &45.2  
        &\textbf{50.99$^*$} &\textbf{50.99} &0 &24.0\\
        u3-24 &\textbf{68.34$^*$}   &\textbf{68.34}  & 0   & 49 & 1412.9  &\textbf{68.34$^{*}$} &\textbf{68.34} &0 &27 &541.9  
        &\textit{68.39$^*$} &\textit{68.39} &-0.07\% &400.2 \\
        u3-30 &\textbf{77.41$^*$}   & \textbf{77.41}  & 0.02    & 19 & 4378.1   &\textbf{77.41$^{*}$} &\textbf{77.41} &0  &5 &1725.5 
        &\textit{78.14$^*$} &\textit{78.14} &-0.94\% &3401.4 \\
        u3-36 &\textbf{105.79}  & 105.69  & 0.09\%    & 14 & 7200.0  
        &\textbf{105.79} &\textbf{105.78} &0.01\% & 15 &7200.0  
        &\textbf{105.79} &104.37 &1.34\% & 7200.0\\
        u4-16 &\textbf{53.87$^*$}   & \textbf{53.87}  & 0    & 1  & 22.2  
        &\textbf{53.87$^*$} &\textbf{53.87} &0 &1 &4.0  
        &\textbf{53.87$^*$} &\textbf{53.87} &0 &88.8\\
        u4-24 &\textbf{89.96$^*$}   & \textbf{89.96} & 0    & 3  & 166.5  
        &\textbf{89.96$^*$} &\textbf{89.96} &0 &1 &82.3  
        &\textbf{89.96$^*$} &\textbf{89.96} &0 &22.8\\
        u4-32 &\textbf{99.50$^*$}   & \textbf{99.50}  & 0    & 1  & 1428.9   &\textbf{99.50$^*$} &\textbf{99.50} &0 &1 &1028.6 
        &\textbf{99.50$^*$} &\textbf{99.50} &0 &2827.2\\
        u4-40 &136.59  & \textbf{135.00}  & 1.16\%    & 42 & 7200.0  
        &\textbf{135.80}  &134.99 &0.22\% &43 &7200.0  
        &NA &133.01 &2.46\% &7200.0\\
        u4-48 &NA &NA &NA  &1  &7200.0  
        &\textbf{185.16} &\textit{-216.07} &NA &1 &7200.0  
        &NA &\textbf{132.49} &NA &7200.0\\
        u5-40 &\textbf{122.93}  & 122.91  & 0.02\%    & 5  & 7200.0  
        &\textbf{122.93$^{*}$} &\textbf{122.93} &0 &7 &6513.9  
        &NA &109.28 &11.88\% &7200.0\\
        u5-50 &NA &NA &NA  &1  &7200.0  &195.72 
        &\textit{132.79} &NA &1 &7200.0  
        &\textbf{144.36} &\textbf{133.33} &7.64\% &7200.0\\
        \hline
        Avg  & & &  &16.3  &3834.2  &  &  &0.09\%   &12.4  &3354.1  
        & & &1.97\% &3598.2\\
        \hline
        Summary&$\#opt$ & $\#best lb$ &$\overline{LB_{1iter}}\%$ &$\overline{N_{node}^{1iter}}$ & $\#best ub$ &$\#opt$ & $\#best lb$ &$\overline{LB'}\%$ &$\overline{N_{node}}$ &$\#best ub$  
        &$\#opt$ & $\#best lb$ & $\overline{LB_2}\%$  & $\#best ub$\\ 
        &31 & &0.10\% &7.8 &
        &32 &39 &0.07\% &9 &39
        &25 &27 &1.78\% &28\\
        \hline
    \end{tabular}
    \end{center}
\end{table}

\newpage

\section{Experimental results of allowing unlimited visits to each recharging station} \label{experiments multiple visits}
In this section, we conduct experiments on type-a, -u, and -r instances by allowing unlimited visits to each recharging station ($N_{max}^s = \infty$) under different values of $\gamma$. The corresponding results are shown in Table \ref{multiple type-a}, \ref{multiple type-u}, and \ref{multiple type-r}.

\begin{table}[!htp]
\centering
 \footnotesize
 \begin{threeparttable}
    \caption{Allowing unlimited visits to each recharging station: B\&P algorithm results on Type-a instances under $\gamma=0.4,0.7$}
    \label{multiple type-a}
    \setlength{\belowcaptionskip}{0.5cm}
    \begin{tabular}{c c c c c c c |c c}
    \hline
        \textbf{$\gamma = 0.4$}&\multicolumn{6}{c}{Unlimited charging visits} &\multicolumn{2}{c}{At-most-one visit}\\
        \hline
        Instance & $Obj$ & $LB$ &$LB\%$ &$N_{node}$ &$N_{max}^s$ &CPU(s) & BKS$'$ & BKS$'$\% \\
        \hline
        a2-16 &\textbf{237.38$^*$}  & 237.38  &0 &1   &1   & 34.6  
        &\textbf{237.38$^*$} &0  \\
        a2-20 &\textbf{280.70$^*$}  & 280.70  &0 &1   &1   & 255.4  
        &\textbf{280.70$^*$} &0 \\
        a2-24 &\textbf{346.28$^*$}  & 346.28  &0 &1   &2   & 964.6 
        &347.04$^*$ &0.21\%  \\
        a3-18 &\textbf{236.82$^*$}  & 236.82  &0 &1   &1   & 30.1  
        &\textbf{236.82$^*$} &0  \\
        a3-24 &\textbf{274.80$^*$}  & 274.80  &0 &1   &2   & 86.7
        &\textbf{274.80$^*$} &0  \\
        a3-30 &\textbf{413.28$^*$}  & 413.28  &0 &1  &3   & 742.9
        &413.34$^*$ &0.02\%  \\
        a3-36 &\textbf{481.17$^*$}  & 481.17  &0 &1   &3   & 1522.8 
        &482.75 &0.33\%  \\
        a4-16 &\textbf{222.49$^*$}  & 222.49  &0 &1   &0   & 9.6 
        &\textbf{222.49$^*$} &0  \\
        a4-24 &\textbf{311.03$^*$}  & 311.03  &0 &1   &0   & 34.5  
        &\textbf{311.03$^*$} &0  \\
        a4-32 &\textbf{394.26$^*$}  & 394.26  &0 &1   &2   & 205.0  
        &\textbf{394.26$^*$} &0\\
        a4-40 &\textbf{453.84$^*$}  & 453.84  &0 &1   &1   & 1027.1 
        &\textbf{453.84$^*$} &0 \\
        a4-48 &\textbf{554.54$^*$}  & 554.54  &0 &1   &3   & 4630.9
        &554.60$^*$ &0.01\% \\
        a5-40 &\textbf{414.51$^*$}  & 414.51 &0 &1    &2   & 539.0  
        &\textbf{414.51$^*$} &0 \\
        a5-50 &\textbf{559.48$^*$}  & 559.48 &0 &1   &2   &2661.4 
        &559.51$^*$ &0.004\% \\
        \hline
        Avg &  &  &0 &1.0 &1.6  &909.6 
        & &0.04\% \\
        \hline
        \textbf{$\gamma = 0.7$} & $Obj$ & $LB$ &$LB\%$ &$N_{node}$ &$N_{max}^s$ &CPU(s) & BKS$'$ & BKS$'$\%\\
        \hline
        a2-16 &\textbf{240.66$^*$}  &240.66 &0  &1  &1   &225.4
        &\textbf{240.66$^*$} &0 \\
        a2-20 &\textbf{285.86$^*$}  &285.86  &0 &1 &2   &469.6 
        &293.27$^*$ &2.53\% \\
        a2-24 &\textbf{350.32$^*$}  &350.32  &0 &1 &3   & 3513.6 
        &353.18$^*$ &0.81\%  \\
        a3-18  &\textbf{238.82$^*$} & 238.82 &0 &1 &3   & 62.1
        &240.58$^*$ &0.73\% \\
        a3-24 &\textbf{275.20$^*$}  & 275.20 &0 &1 &2   & 244.7
        &275.97$^*$ & 0.28\% \\
        a3-30 &\textbf{413.35$^*$}  & 413.35 &0 &1 &4   & 1556.4
        &424.93$^*$ & 2.70\%  \\
        a3-36 &\textbf{483.08$^*$}  & 483.08 &0 &1 &3   & 1254.4 
        &494.04 &2.22\% \\
        a4-16 &\textbf{222.49$^*$}  & 222.49 &0 &1 &2   & 11.0
        &223.13$^*$ &0.29\%  \\
        a4-24 &\textbf{315.40$^*$}  & 315.40 &0 &7 &2   &193.63
        &316.65$^*$ &0.39\% \\
        a4-32 &\textbf{394.94$^*$}  & 394.94 &0 &1  &4   & 427.7  
        &397.87 &0.74\%  \\
        a4-40 &\textbf{457.76$^*$} & 457.76  &0 &1 &4   & 1604.2
        &467.72$^*$ &2.13\%\\
        a4-48 &570.31  & 556.99  &1.95\%  &1  &4   & 7200.0 
        &NA &NA  \\
        a5-40 &\textbf{415.88$^*$}  & 415.88  &0 &1 &4   & 1171.2 
        &418.75$^*$ &0.69\% \\
        a5-50 &580.00  & 565.89  &2.30\% &1 &4   & 7200.0  
        &NA &NA  \\
        \hline
        Avg &  & &0.30\% &1.4 &3   &1795.3  & &1.13\% \\
        \hline
        Summary & $\#opt$ &$\#bestub$ &$\overline{LB}\%$ &$\overline{N_{node}}$ &$\overline{N_{max}^s}$  & $\overline{CPU}$(s) &$\#bestub$ &$\overline{BKS'}\%$ \\
        &26 &26 &0.15\% &1.2 &2.3  &1352.4 &10 &0.59\% \\
        \hline
    \end{tabular}
    \end{threeparttable}
\end{table}

\begin{table}[!htp]
\renewcommand\arraystretch{0.85}
\centering
 \footnotesize
 \begin{threeparttable}
    \caption{Allowing unlimited visits to each recharging station: B\&P algorithm results on Type-u instances under $\gamma=0.1,0.4,0.7$}
    \label{multiple type-u}
    \setlength{\belowcaptionskip}{0.5cm}
    \begin{tabular}{c c c c c c c|c c}
    \hline
        \textbf{$\gamma = 0.1$}&\multicolumn{6}{c}{Unlimited charging visits} &\multicolumn{2}{c}{At-most-one visit}\\
        \hline
        Instance & $Obj$ & $LB$ &$LB\%$  &$N_{node}$ &$N_{max}^s$  &CPU(s) & BKS$'$  & BKS$'$\%\\
        \hline
        u2-16 &\textbf{57.61$^*$}  &57.61   &0 &1  &1  &69.5   
        &\textbf{57.61$^*$} &0 \\
        u2-20 &\textbf{55.59$^*$}  &55.59   &0 &1 &1  &215.3
        &\textbf{55.59$^*$} &0 \\
        u2-24 &\textbf{90.66$^*$}  &90.66   &0 &1 &1  &1649.0
        &90.66$^*$ &0 \\
        u3-18 &\textbf{50.74$^*$}  &50.74   &0 &1 &0  &46.0 
        &\textbf{50.74$^*$} &0\\
        u3-24 &\textbf{67.56$^*$}  &67.56   &0 &1 &1  &109.6
        &\textbf{67.56$^*$} &0 \\
        u3-30 &\textbf{76.75$^*$}  & 76.75  &0 &1 &0  &750.8
        &\textbf{76.75$^*$} &0 \\
        u3-36 &\textbf{103.93$^*$}  &103.93   &0 &1 &2  &4326.5 
        &104.04$^*$ &0.11\% \\
        u4-16 &\textbf{53.58$^*$}  &53.58   &0 &1 &0  &6.5  
        &\textbf{53.58$^*$} &0 \\
        u4-24 &\textbf{89.83$^*$}  &89.83   &0 &1 &1  &63.2 
        &\textbf{89.83$^*$} &0\\
        u4-32 &\textbf{99.29$^*$}  &99.29   &0 &1 &1  &416.4
        &\textbf{99.29$^*$} &0\\
        u4-40 &\textbf{133.11$^*$}  &133.11   & 0  &1  &1    &2586.1
        &\textbf{133.11$^*$} &0\\
        u4-48 &\textbf{147.33}  &146.74   &0.40\% &1 &2  &7200.0
        &148.08 &0.51\% \\
        u5-40 &\textbf{121.86$^*$}  &121.86   &0 &1 &1  &1591.2  
        &\textbf{121.86$^*$}  &0\\
        u5-50 &\textbf{142.82}  &142.75   &0.05\% &1 &1  &7200.0  
        &142.82 &0\\
        \hline
        Avg &  &  &0.03\% &1.0 &0.93  &2270.1 & &0.04\%\\
        \hline
        \textbf{$\gamma = 0.4$} & $Obj$ & $LB$ &$LB\%$ &$N_{node}$ &$N_{max}^s$ &CPU(s) & BKS$'$  & BKS$'$\%\\
        \hline
        u2-16 &\textbf{57.65$^*$}  &57.65   &0 &1 &1  &35.7     &\textbf{57.65$^*$} &0 \\
        u2-20 &\textbf{56.34$^*$}  &56.34   &0 &1 &1  &329.9     &\textbf{56.34$^*$} &0\\
        u2-24 &\textbf{90.84$^*$}  &90.84   &0 &1 &2  &2408.5   &91.27 &0.47\%\\
        u3-18 &\textbf{50.74$^*$}  &50.74  &0  &1 &1  &62.6      &\textbf{50.74$^*$} &0 \\
        u3-24 &\textbf{67.56$^*$}  &67.56   &0 &1 &1   &141.0    &\textbf{67.56$^*$} & 0\\
        u3-30 &\textbf{76.75$^*$}  &76.75   &0 &1 &1  &1457.8   &\textbf{76.75$^*$} & 0\\
        u3-36 &\textbf{104.06$^*$}  &104.06  &0 &1 &1  &3729.6 &\textbf{104.06$^*$} &0\\
        u4-16 &\textbf{53.58$^*$}  &53.58   &0  &1 &0  &7.1      &\textbf{53.58$^*$} &0 \\
        u4-24 &\textbf{89.83$^*$}  &89.83   &0 &1  &1   &39.0     &\textbf{89.83$^*$} &0\\
        u4-32 &\textbf{99.29$^*$}  &99.29   &0 &1  &1   &438.3    &\textbf{99.29$^*$} &0\\
        u4-40 &\textbf{133.36$^*$}  &133.36   &0 &1 &2    &1581.5 &133.78 &0.31\% \\
        u4-48 &\textbf{147.56}  &146.97   &0.40\% &1 &2   &7200.0   &147.63 &0.05\%\\
        u5-40 &\textbf{121.96$^*$} &121.96   &0 &1  &1   &1073.8  &\textbf{121.96$^*$} &0 \\
        u5-50 &\textbf{142.83$^*$}  &142.83   &0 &1  &1    &6587.9 &142.84 &0.007\%\\
        \hline
        Avg &  & &0.03\% &1.0 &1.14   &1927.5 & &0.06\%\\
        \hline
        \textbf{$\gamma = 0.7$} & $Obj$ & $LB$ &$LB\%$ &$N_{node}$ &$N_{max}^s$ &CPU(s) & BKS$'$  & BKS$'$\% \\
        \hline
        u2-16 &\textbf{58.17$^*$}  &58.17   &0 &1 &2  &172.2    &59.19$^*$  &1.72\%\\
        u2-20 &\textbf{56.86$^*$}  &56.86   &0 &1 &1  &397.0    &\textbf{56.86$^*$}  &0\\
        u2-24 &\textbf{91.33$^*$}  &91.33   &0 &1 &2  &5250.1   &92.17  &0.91\%\\
        u3-18  &\textbf{50.99$^*$}  &50.99  &0 &1 &1  &47.4     &\textbf{50.99$^*$}  &0\\
        u3-24 &\textbf{68.06$^*$}  &68.06   &0 &1 &2  &558.9    &68.44  &0.48\%\\
        u3-30 &\textbf{77.29$^*$}  &77.29   &0 &1 &2  &807.9    &77.41$^*$  &0.16\%\\
        u3-36 &106.72  &104.85   &1.07\% &1 &2  &7200.0    &\textbf{106.50}  &-0.88\%\\
        u4-16 &\textbf{53.87$^*$}  &53.87   &0 &1 &1  &16.7     &\textbf{53.87$^*$}  &0\\
        u4-24 &\textbf{89.83$^*$}  &89.83   &0 &1 &2  &74.6     &\textbf{89.96$^*$}  &0.14\%\\
        u4-32 &\textbf{99.50$^*$}  &99.50   &0 &1 &1  &1600.3   &\textbf{99.50$^*$}  &0\\
        u4-40 &\textbf{134.38}  &134.16   &0.16\% &45  &3  &7200.0  &135.29  &0.67\%\\
        u4-48 &152.72  &145.99   &2.35\% &1 &2  &7200.0   &185.16  &17.52\%\\
        u5-40 &\textbf{123.00}  &122.72   &0.23\% &1 &1  &7200.0   &123.82  &0.66\%\\
        u5-50 &\textbf{142.89$^*$}  &142.89   &0 &1 &2  &5628.9 &144.36  &1.02\%\\
        \hline
        Avg &  & &0.27\% &4.1 &1.71  &3096.7 & &1.60\%\\
        \hline
        Summary & $\#opt$ &$\#best ub$ &$\overline{LB}\%$ &$\overline{N_{node}}$ &$\overline{N_{max}^s}$ & $\overline{CPU}$(s) &$\#bestub$ &$\overline{BKS}\%$\\
        &35 &40 &0.11\% &2.0 &1.3   &2431.4  &26 &0.57\%\\
        \hline
    \end{tabular}
    \end{threeparttable}
\end{table}

\begin{table}[!htp]
\renewcommand\arraystretch{0.85}
\centering
 \footnotesize
 \begin{threeparttable}
    \caption{Allowing unlimited visits to each recharging station: B\&P algorithm results on Type-r instances under $\gamma=0.1,0.4,0.7$}
    \label{multiple type-r}
    \setlength{\belowcaptionskip}{0.5cm}
    \begin{tabular}{c c c c c c c| c c}
    \hline
        \textbf{$\gamma = 0.1$}&\multicolumn{6}{c}{Unlimited charging visits} &\multicolumn{2}{c}{At-most-one visit}\\
        \hline
        Instance & $Obj$ & $LB$ &$LB\%$ &$N_{node}$ &$N_{max}^s$ &CPU(s) & BKS$'$  & BKS$'$\%\\
        \hline
        r5-60 &\textbf{683.39$^*$} &683.39 &0 &1 &0  &13941.5  &\textbf{683.39$^*$} &0\\
        r6-48 &\textbf{506.45$^*$} &506.45 &0 &1 &0  &1675.1   &\textbf{506.45$^*$} &0\\
        r6-60 &\textbf{689.45$^*$} &689.45 &0 &9 &3  &3305.3  &\textbf{689.45$^*$} &0 \\
        r6-72 &761.91 &740.87 &2.76\%  &1 &1  &18000.0    &\textbf{761.34} &-0.07\%\\
        r7-56 &\textbf{612.02$^*$} &612.02 &0 &1  &1  &2452.9   &\textbf{612.02$^*$} &0\\
        r7-70 &\textbf{754.28$^*$} &754.28 &0 &21 &1  &10204.4  &\textbf{754.28} &0\\
        r7-84 &\textbf{870.21} &861.39 &1.01\% &1  &1  &18000.0    &889.38 &2.16\%\\
        r8-64 &\textbf{632.22$^*$} &632.22 &0 &1 &0  &4005.4   &\textbf{632.22$^*$} &0\\
        r8-80 &\textbf{788.99$^*$} &788.99 &0 &1 &1  &4883.0  &\textbf{788.99} &0\\
        r8-96 &1098.12 &\textit{807.04} &NA &1 &1  &18000.0  &\textbf{1053.11} &-4.27\%\\
        \hline
        Avg & & &0.42\% &3.8 &0.9  &9446.8 & &-0.22\%\\
        \hline
        \textbf{$\gamma = 0.4$} & $Obj$ & $LB$ &$LB\%$ &$N_{node}$ & $N_{max}^s$ &CPU(s) & BKS$'$  & BKS$'$\%\\
        \hline
        r5-60 &\textbf{683.58$^*$} &683.58 &0 &1 &3  &14205.2  &\textbf{684.49$^*$} &0.13\%\\
        r6-48 &\textbf{506.45$^*$} &506.45 &0 &1 &0  &1225.8   &\textbf{506.45$^*$} &0\\
        r6-60 &\textbf{689.46$^*$} &689.46 &0 &1 &2  &5133.0   &\textbf{689.46$^*$} &0 \\
        r6-72 &\textbf{762.65} &756.44 &0.81\% &1 &3  &18000.0    &NA &NA\\
        r7-56 &\textbf{612.02$^*$} &612.02 &0 &3 &1  &846.1  &\textbf{612.02$^*$} &0\\
        r7-70 &\textbf{754.28$^*$} &754.28 &0 &19 &1  &10630.8 &\textbf{754.28$^*$} &0\\
        r7-84 &\textbf{891.44} &864.14 &3.06\% &1 &3  &18000.0 &1081.76 &17.59\%\\
        r8-64 &\textbf{632.22$^*$} &632.22 &0 &1 &1  &5757.1 &\textbf{632.22$^*$} &0\\
        r8-80 &\textbf{788.99$^*$} &788.99 &0 &1 &4  &11757.7 &\textbf{788.99} &0\\
        r8-96 &\textbf{1144.15} &\textit{881.91} &NA &1  &3  &18000.0  &NA &NA\\
        \hline
        Avg & & &0.39\% &3.0 &2.1  &10355.6 & &1.77\%\\
        \hline
        \textbf{$\gamma = 0.7$} & $Obj$ & $LB$ &$LB\%$ &$N_{node}$ &$N_{max}^s$ &CPU(s) & BKS$'$  & BKS$'$\%\\
        \hline
        r5-60 &\textbf{688.84} &684.03 &0.70\% &1 &5  &18000.0  &NA &NA\\
        r6-48 &\textbf{508.10$^*$} &508.10 &0 &1 &4  &2027.7  &NA &NA\\
        r6-60 &\textbf{689.55$^*$} &689.55 &0 &1 &7  &6604.7 &NA &NA \\
        r6-72 &\textbf{788.83} &749.70 &4.96\% &1 &4  &18000.0  &NA &NA\\
        r7-56 &\textbf{616.16$^*$} &616.16 &0 &59 &7  &14247.4 &NA &NA\\
        r7-70 &\textbf{765.49} &754.14 &1.48\% &1 &6  &18000.0  &NA &NA\\
        r7-84 &\textbf{1004.81} & \textit{787.22}&NA &1 &4  &18000.0  &NA &NA\\
        r8-64 &\textbf{632.61$^*$} &632.61 &0 &1 &6  &4962.2  &NA &NA\\
        r8-80 &\textbf{794.41} &794.41 &0 &1 &8  &15051.3 &NA &NA\\
        r8-96 &\textbf{1174.49} &\textit{755.46} &NA &1 &4  &18000.0 &NA &NA\\
        \hline
        Avg & & &0.81\%  &6.8 &5.5  &13289.3 & &NA\\
        \hline
        Summary & $\#opt$ &$\#best ub$ &$\overline{LB}\%$ &$\overline{N_{node}}$  &$\overline{N_{max}^s}$ & $\overline{CPU}$(s) & $\#best ub$ &$\overline{BKS}\%$\\
        &18 &28 &0.54\% &4.5 &2.8 &11030.6 &14 &0.79\%\\
        \hline
    \end{tabular}
    \end{threeparttable}
\end{table}

\newpage

\section{Supplementary Material} \label{supplementary document}
Several solutions are found by our B\&P algorithm that are strictly better than the optimal solutions reported in \cite{bongiovanni2019electric}. In this section, we analyze why the model proposed in the original paper leads to incorrect results. To facilitate illustration, we take our obtained solution of instance a2-24-0.4 as an example, where we get an optimal solution value of \textbf{347.04} while the reportedly optimal solution is \textbf{348.03}. The solution gap is higher than the 0.01\% default tolerance gap of Gurobi. To find the conflicting constraints in the MIP model, we set the binary variables $x_{i,j}^k$ to the solution we found as constraints and use ``compute conflict" in Julia and print the conflicting constraints.

The optimal solution we obtained is:

(1) route 1: $[51,7,31,11,35,10,34,56,5,29,4,21,20,28,45,44,1,25,12,8,36,32,54]$

(2) route 2: 
$[52,17,41,19,43,22,46,18,2,42,26,15,39,6,30,16,13,40,37,23,47,14,38,57,3,27,24,9,48,33,55,53]$

\subsection{Error: Incorrect Value for Big ``M"}
The problematic constraints in \cite{bongiovanni2019electric} are:
\begin{equation} \label{RT1}
    E_s^k \leqslant T_s^k - t_{i,s} - T_i^k + M_{i,s}^k\left(1-x_{i,s}^{k}\right), \quad \forall s \in S, i \in D\cup{S}\cup{O_k}, k\in K, i \ne s
\end{equation}

\begin{equation} \label{RT2}
    E_s^k \geqslant T_s^k - t_{i,s} - T_i^k - M_{i,s}^k\left(1-x_{i,s}^{k}\right), \quad \forall s \in S, i \in D\cup{S}\cup{O_k}, k\in K, i \ne s
\end{equation}
where $E_s^k$ are the decision variables indicating the recharging time at recharging station $s$ for vehicle $k$. $T_i^k$ and $T_s^k$ are the decision variables indicating the time at which vehicle $k$ starts its service at location $i$ and $s$, respectively.

In their real implementation, constraints (\ref{RT1}) and (\ref{RT2}) are implemented as constraints (\ref{RT1_1}) and (\ref{RT2_1}). Indeed, as the time windows on the recharging visits are not binding, we need to consider the path from $s$ to $i$ to restrict the recharging time at station $s$. The recharging time constraints are formulated as follows:
\begin{equation} \label{RT1_1}
    E_s^k \leqslant T_i^k - t_{s,i} - T_s^k + M_{s,i}^k\left(1-x_{s,i}^{k}\right), \quad \forall s \in S, i \in P\cup{S}\cup{F}, k\in K, i \ne s
\end{equation}

\begin{equation} \label{RT2_1}
    E_s^k \geqslant T_i^k - t_{s,i} - T_s^k - M_{s,i}^k\left(1-x_{s,i}^{k}\right), \quad \forall s \in S, i \in P\cup{S}\cup{F}, k\in K, i \ne s
\end{equation}

That is, we have the following constraints always hold:
\begin{eqnarray} \label{RT*}
    &T_i^k - t_{s,i} - M_{s,i}^k(1-x_{s,i}^{k})\leqslant T_s^k+ E_s^k \leqslant  T_j^k - t_{s,j} + M_{s,j}^k(1-x_{s,j}^{k}), \\ &\forall s \in S, i \in P\cup{S}\cup{F}, k\in K, i \ne s \nonumber
\end{eqnarray}

However, the value of big ``M" used in \cite{bongiovanni2019electric} is: $M_{i,j}^k=max\{0,l_i+s_i+t_{i,j}-e_j\}$. With this value of big ``M", it will not hold for some cases. We take an example with our obtained solution of a2-24-0.4, where $x_{56,24}^1 = 0$, $x_{56,5}^1 = 1$, $M_{56,24}^1 = 127.20$, $M_{56,5}^1 = 445.32$, $t_{56,24} = 15.86$, $t_{56,5} = 4.98$. As the earliest time window at node 24 is 603.0, the latest time window at node 5 is 302.85, the time window constraints on node 24 and node 5 are:


\begin{eqnarray} \label{tw at 24 and 5}
    &T_{24}^1 \geqslant 603.0, \quad T_{5}^1 \leqslant 302.85  
\end{eqnarray}

Recharging time constraints are: 
\begin{equation} \label{RT1 at 56}
       T_{24}^1 - t_{56,24} - M_{56,24}^1(1-x_{56,24}^1) \leqslant T_{56}^1+ E_{56}^1
\end{equation}

\begin{equation} \label{RT2 at 56}
      T_{56}^1+ E_{56}^1 \leqslant T_{5}^1 - t_{56,5}+M_{56,5}^1(1-x_{56,5}^1)
\end{equation}

Introducing constraints (\ref{tw at 24 and 5}) and $x_{56,24}^1 = 0$, $x_{56,5}^1 = 1$, $M_{56,24}^1 = 127.20$, $M_{56,5}^1 = 445.32$,  $t_{56,24} = 15.86$, $t_{56,5} = 4.98$ to constraint (\ref{RT1 at 56}) and (\ref{RT2 at 56}), we have:

\begin{equation}
    459.94 \leqslant T_{56}^1+ E_{56}^1 \leqslant 297.87
\end{equation}

which is a contradiction!

To obtain the correct big ``M" value, we calculate a lower bound for big ``M" parameters from constraints (\ref{RT1_1}) as follows:

\begin{equation}
    E_s^k - T_i^k + t_{s,i} + T_s^k \leqslant  M_{s,i}^k
\end{equation}

The maximum value of the left-hand side is obtained when $E_s^k = \frac{Q}{\alpha}$, $T_i^k = e_i$, $T_s^k = T_p$, where $T_p$ is the planning horizon. We obtain:

\begin{equation}
    \frac{Q}{\alpha} - e_i + t_{s,i} + T_p \leqslant  M_{s,i}^k \longrightarrow M_{s,i}^k = \min\{\frac{Q}{\alpha} - e_i + t_{s,i} + T_p, T_p\}
\end{equation}

We take $T_p$ directly as the value of big ``M" and the contradiction is solved.

\subsection{Typo: Incorrect Value of Big ``G"}
Furthermore, there is also a typo related to the value of big ``G" in \cite{bongiovanni2019electric} with the following constraints:

\begin{equation} \label{big G problem constraint}
    L_i^k + l_j + G_{i,j}^k\left(1-x_{i,j}^{k}\right) \geqslant L_j^k, \quad \forall i \in V \setminus F, j \in V \setminus O_k,i \neq j, k \in K
\end{equation}

The big ``G" values used in \cite{bongiovanni2019electric} is $G_{i,j}^k = \min\{C^k,C^k+l_i\}$. With this value, the solutions that have continuously visited $C^k$ times drop-off nodes are cut off. For example, our obtained solution has three continuous visits of drop-off nodes: 28-$>$45-$>$44, and we have $x_{28,45}^1 = x_{45,44}^1 = 1.0$ and $x_{44,28,1} = 0.0$. As these nodes are drop-off nodes, the loads on these nodes are negative:  $l_{45} = l_{44} = l_{28} = -1.0$. The maximal vehicle capacity $C^k (k=1)$ equals three. Based on constraints  (\ref{big G problem constraint}), we have:

\begin{equation} \label{conflict1}
    L_{28}^1 + l_{45} \geqslant L_{45}^{1}
\end{equation}

\begin{equation} \label{conflict2}
    L_{45}^1 + l_{44} \geqslant L_{44}^{1}
\end{equation}

\begin{equation} \label{conflict3}
    L_{44}^1 + l_{28} + G_{44,28}^1 \geqslant L_{28}^{1}
\end{equation}

However, with the defined value in \cite{bongiovanni2019electric}, $G_{44,28}^1$ must satisfy:

\begin{equation} \label{conflict4}
    G_{44,28}^1 \leqslant C^1 + l_{44}
\end{equation}

Introducing constraint (\ref{conflict2}), (\ref{conflict3}), and (\ref{conflict4}) into constraints (\ref{conflict1}) leads to contradiction:
\begin{equation}
    -1.0 = C^1 + l_{44} + l_{45} + l_{28} \geqslant 0
\end{equation}


The correct value is $G_{i,j}^k = \max\{C^k,C^k+l_i\}$. 

\end{document}